\numberwithin{equation}{section}
\newtheorem{theorem}{Theorem}[section]
\newtheorem{theorem*}{Theorem}
\newtheorem{remark}[theorem]{Remark}
\newtheorem{lemma}[theorem]{Lemma}
\newtheorem{proposition}[theorem]{Proposition}
\newtheorem{corollary}[theorem]{Corollary}
\newtheorem*{question*}{Question}
\newtheorem{definition}[theorem]{Definition}
\newcommand{\nn}{\nonumber}
\newcommand{\R}{\mathbb{R}}
\newcommand{\E}{\mathbb{E}}
\newcommand{\e}{{\rm e}^}
\newcommand{\mb}{\mathbf}
\newcommand{\bs}{\boldsymbol}
\author{Nassif Ghoussoub, Young-Heon Kim and Aaron Zeff Palmer}
\address{Department of Mathematics\\ University of British Columbia\\ Vancouver, V6T 1Z2 Canada}
\email{nassif@math.ubc.ca, yhkim@math.ubc.ca, azp@math.ubc.ca}
\title{PDE Methods for Optimal Skorokhod Embeddings}
\thanks{The   
authors are partially supported by  the 
Natural Sciences and Engineering Research Council of Canada (NSERC). \\
\copyright 2018 by the authors.
}
\date{\today}
\begin{document}

\begin{abstract}
	We consider cost minimizing stopping time solutions to Skorokhod embedding problems, which deal with transporting a source probability measure to a given target measure through a stopped Brownian process. PDEs and a free boundary problem approach are used to address the problem in general dimensions with space-time inhomogeneous costs given by Lagrangian integrals along the paths.  We introduce an Eulerian---mass flow---formulation of the problem, whose dual is given by Hamilton-Jacobi-Bellman type variational inequalities.  Our key result is the existence (in a Sobolev class) of optimizers for this new dual problem, which in turn determines a free boundary, where the optimal {\em Skorokhod transport} drops the mass in space-time. This complements and provides a constructive PDE alternative to recent results of Beiglb\"ock, Cox, and Huesmann, and is a first step towards developing a general optimal mass transport theory involving mean field interactions and noise.  
\end{abstract}

\maketitle
\tableofcontents

\section{Introduction} \label{sec:introduction}

	Given two Radon probability measures $\mu$ and $\nu$ on $\R^d$, the Skorokhod embedding problem consists of constructing a stopping time $\tau$ such that $\nu$ is realized by the distribution of $B_\tau$ (i.e, $B_\tau \sim\nu$ in our notation), where $B_t$ is Brownian motion starting with $\mu$ as a source distribution, i.e., $B_0\sim \mu$. We shall denote by ${\mathcal T}(\mu, \nu)$ the set of such --possibly randomized-- stopping times with finite expectation. If $\mu$, $\nu$ are supported in an open convex subset $O \subset \R^d$, then ${\mathcal T}_O(\mu, \nu)$ denotes those in ${\mathcal T}(\mu, \nu)$ such that $\tau \leq \tau_O:=\inf\{t;\ B_t\not\in O\}$. The martingale property of Brownian motion obviously imposes a natural necessary condition on the pair $\mu$, $\nu$ for the existence of such a stopping time, namely that they should be {\em in subharmonic order}, denoted $\mu\prec \nu$, which means that 
	\begin{equation}\label{eqn:subharmonic_order}
		\int_{ O} h\, d\mu\leq \int_{ O} h \, d\nu \quad \hbox{
	 		for all $h$ smooth and subharmomic on $O$.}     
	\end{equation} 
	The fact that condition (\ref{eqn:subharmonic_order}) is also sufficient to guarantee that ${\mathcal T}_O(\mu, \nu)$ is non-empty has been the subject of a large number of papers. Indeed, and as reported by Ob\l{}oj \cite{obloj2004skorokhod}, one can find more than two dozen constructions of such embeddings in the literature ever since Skorokhod \cite{skorokhod1965studies} gave the original solution in the one-dimensional case. Among those, there are some with additional interesting properties arising from the fact that they optimize various functionals involving stopped Brownian motion. Indeed, Rost  had established in \cite{Rost} that Root's embedding \cite{Root} minimizes the variance of the stopping time (or equivalently $\mathbb{E}[\tau^2]$), which was conjectured by Kiefer \cite{kiefer}.   
	He also established a new embedding that minimizes $\mathbb{E}[\tau^p]$ for $0<p<1$.   There has been an attempt to relate the Root's embedding problem to obstacle problems by Cox and Wang \cite{cox2013root} and Gassiat, Oberhauser, and dos Reis \cite{gassiat2015root}, but, their works were restricted to one-dimension: see our Remark \ref{rem:potential}.

This led Beiglb\"ock-Cox-Huesmann \cite{beiglboeck2017optimal} to consider the more general problem of identifying and characterizing solutions of the Skorokhod embedding problem that optimize various functionals of stopped Brownian motion. They also argued that if one thinks of a stopping time $\tau$ as ``a transport plan" from Wiener measure starting at $\mu$  to the target measure $\nu$, then the analogy with the theory of mass transport --though not directly applicable-- could provide a powerful intuition towards developing an analogous theory. They do so in \cite{beiglboeck2017optimal} by using stochastic analysis to essentially extend the measure theoretic duality methods \`a la Kantorovich and the monotonicity characteristics of optimal mass transport plans to this setting. 

Our results can be seen as complementary to theirs in several ways: they involve a new important Eulerian formulation, its crucial dual form, which allow us to use PDE methods and the theory of Hamilton-Jacobi-Bellman inequalities. The optimal stopping time we obtain, will be hitting times of corresponding free boundaries. An important part of our approach is our proof of the attainment in the new dual problem, which has been an elusive issue in martingale transport theory, especially in higher dimensions \cite{ghoussoub2015structure} (see also the following related results \cite{de2017irreducible}, \cite{de2018quasi}).

	We consider two Radon probability measures $\mu$, $\nu$ with finite expectations,  whose support lie in a given convex domain $O\subset \R^d$, and in subharmonic order on $O$. Our primal problem will be the following minimization: 
	\begin{align} \label{eqn:Skorokhod_cost}
		{\mathcal P}_0(\mu,\nu) :=  \inf
		\Big\{\mathbb{E}\Big[ \int_0^\tau L(t,B_t)dt\Big];\ \tau \in {\mathcal T}_O(\mu, \nu)
		\Big\}. 
	\end{align}
	These Lagrangian costs are a special case of the general costs considered in \cite{beiglboeck2017optimal}, where they prove a general duality theorem, but also give an extension of the result of \cite{beiglboeck2017optimal}, where it is shown the optimizers are given by the Root and Rost embeddings in the case the Lagrangian $L(t, x)$ is only a (strictly increasing/decreasing) function of time, for which we will handle dependence on the spatial variable.
	We note that costs that depend on the end time and position can be reduced to Lagrangian costs by It\^{o}'s formula
	$$
		\mathbb{E}\big[g(\tau,B_\tau)\big]= \mathbb{E}\Big[\int_0^\tau L^g(t,B_t)dt\Big],
	$$
	where $L^g(t,x)=\partial_t g(t,x)+\frac{1}{2}\Delta g(t,x))$.
	In a forthcoming paper \cite{GKP2}, we shall deal with Lagrangians defined on phase space (see the end of this introduction). Throughout this article,  the Lagrangian $L$ will be assumed to satisfy:
	\begin{enumerate} [label=\textbf{(H\arabic*)}] \setcounter{enumi}{-1}
		\item \label{itm:continuous}
		 $L$ is non-negative and belongs to $C_{-\gamma}(\R^+\times \overline{O})$, the latter being the space of continuous functions $w$ on $\R^+\times \overline{O}$ with 
		\begin{align*}
 			\hbox{$ \e{-\gamma t} w(t,x) \rightarrow 0$ as $t\to \infty$,   uniformly in $x$.}
		\end{align*}
		Here we fix $\gamma$ satisfying $0<\gamma<\lambda$, where $\lambda$ is the Poincar\'{e} constant of $O$, and we denote $\R^+ = \{ t \in \R | \ t \ge 0\}$. 
	\end{enumerate}
	 	Note that the set of randomized stopping times ${\mathcal T}_O(\mu, \nu)$ is non-empty, convex and compact in an appropriate topology, and therefore the existence of a minimum is not really a problem. The challenge is to characterize such solutions and to show when they are natural and unique stopping times, preferably characterized as hitting times of certain barrier sets that can be naturally identified from the Lagrangian as well as the source and target measures.

	Duality plays an important role in these problems and Beiglb\"ock et al.\ had considered in \cite{beiglboeck2017optimal}  the following dual problem  to \eqref{eqn:Skorokhod_cost}: 
	{\small
	\begin{align}\label{eqn:probabilistic_dual}
 		{\mathcal D}_0(\mu,\nu):=\sup_{\psi\in C(\overline{O}),G\in \mathcal{K}^+_{-\gamma}}\Big\{\int_{\overline{O}}\psi(z)d\nu-\E\big[G_0\big];\ G_t-\psi(B_t)\geq -\int_0^t L(s,B_s)ds\Big\}, 
 	\end{align}
	}
	\hspace{-0.3em}where $\mathcal{K}^+_{-\gamma}$ denotes the set of continuous supermartingales on the probability space of Brownian motion with $\gamma$-exponential growth. In particular, if $G\in \mathcal{K}^+_{-\gamma}$, then $\omega \mapsto G$ is continuous with respect to the topology on $\Omega=C(\R^+;\R^d)$ given by uniform convergence of paths.

	That ${\mathcal P}_0(\mu,\nu)={\mathcal D}_0(\mu,\nu)$ was essentially  proven in \cite{beiglboeck2017optimal} for a slightly different problem. We shall also include a proof in the appendix, Theorem \ref{thm:probabilistic_weak_duality}. 		
	
	\subsection{Eulerian formulation and its dual}
	Our analysis hinges on understanding two other related 
	problems.  One consists of an Eulerian formulation of the primal problem.  We give here a heuristic description leaving the appropriate function spaces to be defined in the next section. For a stopping time $\tau$ such that $B_\tau \sim \nu$, we consider the space-time distribution  of stopped particles ${\rho}$, that is $(\tau,B_\tau)\sim\rho$, where the latter, which we call {\em the stopping measure},  is a probability measure on $\R^+\times \overline{O}$. Note that $\rho$ has a target distribution with  spatial marginal $\nu$, that is
	\begin{align} \label{eqn:Eulerian_target} 
 			\int_{\overline{O}} u(z)\, \nu(dz) =&\ \int_{\overline{O}}\int_{\R^+} u(x){\rho}(dt,dx)\quad \hbox{for all $u\in C(\overline{O})$.}	
 	\end{align}
	Let now  $\eta$ be a non-negative measure on $\R^+\times \overline{O}$ that corresponds to the distribution of Brownian motion before it has stopped. It can be expressed by the following   evolution equation in its `very weak' form via smooth test functions $w$:
	\begin{align}\label{eqn:Skorokhod_evolution}
		-\int_{\overline{O}}w(0,y)\, \mu(dy)=&\ \int_{\overline{O}}\int_{\R^+}\Big[\frac{\partial}{\partial t}w(t,x)+\frac{1}{2}\Delta w(t,x)\Big]\eta(dt,dx)\\
		&\ -\int_{\overline{O}}\int_{\R^+} w(t,x)\rho(dt,dx).\nn
	\end{align}
	Note that if ${\rho}$ has the density with respect to $\eta$, then $\frac{d\rho}{d\eta}(t,x)$ is the conditional probability to stop given $B_t=x$. 
	
 	We shall  say that $(\eta,\rho)$ is {\em an admissible pair}, provided they satisfy (\ref{eqn:Eulerian_target}) and (\ref{eqn:Skorokhod_evolution}) as well as an additional condition of exponential decay in time -to be defined in the next section. We shall then consider the following linear problem 
  	\begin{align} \label{eqn:Eulerian_cost} 
 		{\mathcal P}_1(\mu,\nu) = \inf_{\eta,{\rho}}\Big\{\int_{\overline{O}}\int_{\R^+} L(t,x)\eta(dt,dx);\ (\eta,{\rho})\ {\rm is\ admissible}\Big\}. 
 	\end{align}
	We shall then prove that it is equivalent to the original primal problem, that is
  	\begin{align}  
 		{\mathcal P}_0(\mu, \nu)=	{\mathcal P}_1(\mu,\nu).\nn  
 	\end{align}
	This will be proved by means of another dual problem  that is motivated by standard Monge-Kantorovich theory, once applied to a cost given by a Lagrangian function. This was considered -in a deterministic context- by Bernard-Buffoni \cite{B-B} for fixed end-times problems, and by the authors \cite{GKP} in the case when the end-times are free. Here we shall consider the following dual problem:
	\begin{align} \label{eqn:dual_cost}
		{\mathcal D}_1(\mu,\nu) := \sup_{\psi,J}\Big\{\int_{\R^d}\psi(z)\nu(dz) - \int_{\R^d}J(0,y)\mu(dy);\ V_\psi[J]\geq 0\Big\}, 
	\end{align}
	where $\psi:\R^d\rightarrow \R$ is continuous, $J:\R^+\times \R^d\rightarrow \R$ is $C^2$, and $V_\psi$ is the quasi-variational Hamilton-Jacobi-Bellman operator
	\begin{align} \label{eqn:variational_operator}
		V_\psi[J](t,x):=\min\left\{\begin{array}{r} J(t,x) -\psi(x)\\ -\frac{\partial}{\partial t}J(t,x)-\frac{1}{2}\Delta J(t,x)+L(t,x)\end{array}\right\}.
	\end{align}
	For each end-potential $\psi:\R^d\rightarrow \R$ that is continuous, the value-function, $J_\psi:\R^+\times \R^d\rightarrow \R$ is defined via the 
	dynamic programming principle
	\begin{align}\label{eqn:dynamic_programming}
		J_\psi(t,x) := \sup_{\tau \in \mathcal{R}^{t,x}}\Big\{\mathbb{E}^{t,x}\Big[\psi(B_\tau)-\int_t^\tau L(s,B_s)ds\Big]\Big\},
	\end{align}
	where the expectation superscripted with $t,x$ is with respect to the Brownian motions satisfying $B_t=x$, and the minimization is over all finite-expectation stopping times $\mathcal{R}^{t,x}$ on this restricted probability space such that $\tau \ge t$, which will be defined more precisely at the begininng of Section 2. Note $\tau$ is not necessarily required to be in $ \mathcal{T}_O(\mu, \nu)$.  
	The value function satisfies the following Hamilton-Jacobi-Bellman variational inequality:
	\begin{align*}  
		\left\{\begin{array}{rr}  &J_\psi(t,x)\geq \psi(x)\hfill  \\ 
			&\frac{\partial}{\partial t}J_\psi(t,x)+\frac{1}{2}\Delta J_\psi(t,x)\leq L(t,x) \hfill \\
			& \frac{\partial}{\partial t}J_\psi(t,x)+\frac{1}{2}\Delta J_\psi(t,x)= L(t,x)\quad \hbox	{if $J_\psi(t,x)> \psi(x)$}, 
 		\end{array}\right.
	\end{align*}	
	or equivalently $V_\psi[J_\psi]=0$ in the sense of viscosity. 
	Moreover,  
	\begin{align*}
		J_\psi (t,x)  = \inf \{ J (t, x)  \ ; \ V_\psi [J] \ge 0\}.
	\end{align*}
	
	In Section \ref{sec:duality}, we shall prove that (\ref{eqn:dual_cost}) is also dual to the primal problem	  (\ref{eqn:Skorokhod_cost}), that is  
	\begin{equation} 
		{\mathcal P}_0(\mu, \nu)={\mathcal P}_1(\mu,\nu)={\mathcal D}_0(\mu, \nu)={\mathcal D}_1(\mu,\nu). 
	\end{equation}	
	The equivalence of the two dual problems is through a connection with the so-called Snell problem \cite{el1997reflected}. Indeed, if one considers the stochastic process 
	$$Y_t=\psi(B_t)-\int_0^tL(s,B_s)ds,$$ then  
	$$G_t = J_\psi(t,B_t)-\int_0^tL(s,B_s)ds$$
	 is the smallest supermartingale such that $G_t\geq Y_t$. Moreover, under suitable hypothesis, the ``Snell stopping time," that is $\tau=\inf\{t \, ; \, G_t=Y_t\}$, which renders the process $G_{t\wedge \tau}$ a martingale, will  
	 coincide with the optimal time we are seeking.

\subsection{Dual attainment and verification}
	One of our key results -- proved in Section \ref{sec:attainment_verification} -- is that both  the Eulerian formulation of the primal problem ${\mathcal P}_1(\mu,\nu)$ and the dual  problem ${\mathcal D}_1(\mu,\nu)$ are attained  in Sobolev class. For that, we need additional assumptions. 
Indeed, we may restrict the minimization of $\mathcal{D}_1(\mu,\nu)$ to the pair $\psi$ and its value function $J_\psi$ of (\ref{eqn:dynamic_programming}).
	 Actually, for technical reasons, we shall restrict the problem to the closure of a bounded open convex set $\overline{O}$ and show that $V_\psi[J]=0$ may be posed with additional Dirichlet boundary conditions, i.e., $\psi(x)=J_\psi(t,x)=0$ for $x\in \partial O$. However, we prove the attainment of the dual problem in a Sobolev class of functions, such that $(\psi,J)$ satisfy $V_\psi[J]= 0$ in a weak sense. We show that $\psi$ can be taken to be lower semicontinuous, and in this context we can use results in \cite{bensoussan2011applications} that yield that the `minimal weak solution' to $V_\psi[J]= 0$ coincides with the value function of (\ref{eqn:dynamic_programming}).

Here are our assumptions on the source and target measures $\mu, \nu$:
 	\begin{enumerate} [label=\textbf{(S\arabic*)}] \setcounter{enumi}{-1}

		\item \label{itm:convex_support} The closure of the support of both $\mu$ and $\nu$ is contained in an open, bounded and convex set $O\subset \R^d$.  
		\item \label{itm:L2_initial}
			  $\mu$ and $\nu$ are absolutely continuous with respect to Lebesgue measure and have densities in $L^2(O)$. 
		\item \label{itm:subharmonic_order} $\mu \prec \nu$ for the subharmonic order on $O$.
	\end{enumerate}
	We shall require that the Lagrangian satisfies the following additional assumption:
	\begin{enumerate} [label=\textbf{(H\arabic*)}] \setcounter{enumi}{0}
	 	\item \label{itm:bounded} 
		$L(t,x)\leq D<\infty$ for all $(t,x)\in \R^+\times \overline{O}$.
	\end{enumerate}
The following result summarizes the linkages between the optimizers of the various primal and dual problems. 	
\begin{theorem}\label{thm:strong_duality_0}
		Suppose {\normalfont \ref{itm:convex_support}, {\normalfont \ref{itm:L2_initial}}, \ref{itm:subharmonic_order}}, {\normalfont   \ref{itm:continuous}} and {\normalfont \ref{itm:bounded}}. Then,
		\begin{enumerate} 
		\item The maximum of the Eulerian dual problem ${\mathcal D}_1(\mu, \nu)$ is attained at $\psi\in  H_0^1(O)$ and $J\in \mathcal{X}$  (see \eqref{eqn:X}).

	\item\label{(2)}  The infimum of the primal Eulerian problem ${\mathcal P}_1(\mu, \nu)$ is attained at an admissible pair $(\eta, \rho)\in L^2_\gamma(\R^+;H_0^1(O))\times \mathcal{X}^*$ (see also \eqref{eqn:l2gamma}), which satisfies the following complementary slackness condition:
	{\small
		\begin{align}
 	 		0=&\ \int_{{O}}\int_{\R^+}\Big[J(t,x)-\psi(x)\Big]{\rho}(dt,dx)\nn\\
 	 		&\ + \int_{\R^+}\int_{O}\Big[-\eta(t,x)\frac{\partial}{\partial t}J(t,x) +\frac{1}{2} \nabla J(t,x)\cdot\nabla \eta(t,x)+L(t,x)\eta(t,x)\Big] dxdt.\nn
 	 	\end{align}
}
	\item\label{(3)} Furthermore, $\psi$ is lower semicontinuous, and we may take $J$ to be
		$$
			J(t,x)=J_\psi(t,x):=\sup_{\tau \in \mathcal{R}^{t,x}}\Big\{\mathbb{E}^{t,x}\Big[\psi(B_\tau)-\int_t^\tau L(s,B_s)ds\Big]\Big\}.
		$$
	\end{enumerate}
	\end{theorem}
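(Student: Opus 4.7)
The plan is to establish the three parts of the theorem by direct-method compactness in the Eulerian primal-dual pair followed by an appeal to parabolic variational-inequality theory. Throughout, the weighted Sobolev setting $L^2_\gamma(\R^+;H^1_0(O))$ for $\eta$ and its dual $\mathcal{X}^*$ for $\rho$ is what makes the PDE energy estimates work, with the condition $\gamma<\lambda$ between the weight and the Poincar\'e constant controlling the temporal tail.

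For primal attainment (part~(\ref{(2)})), I would take a minimizing sequence $(\eta_n,\rho_n)$. The weak evolution equation (\ref{eqn:Skorokhod_evolution}) formally says $\partial_t\eta_n=\tfrac12\Delta\eta_n-\rho_n$, with $\eta_n(0,\cdot)=\mu\in L^2(O)$ by \ref{itm:L2_initial} and Dirichlet condition on $\partial O$ by \ref{itm:convex_support}. Pairing with $\eta_n$ and using $\eta_n,\rho_n\ge 0$ gives
\begin{align*}
\tfrac12\partial_t\|\eta_n(t)\|_{L^2}^2+\tfrac12\|\nabla\eta_n(t)\|_{L^2}^2\le 0,
\end{align*}
which, together with Poincar\'e and the weight $\e{-\gamma t}$, yields a uniform $L^2_\gamma(\R^+;H^1_0(O))$ bound on $\eta_n$; the evolution equation then controls $\rho_n$ in $\mathcal{X}^*$ by duality against smooth test functions. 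After extracting a weakly convergent subsequence, the linearity of (\ref{eqn:Eulerian_target}) and (\ref{eqn:Skorokhod_evolution}) preserves admissibility in the limit, and the boundedness of $L$ (assumption \ref{itm:bounded}) makes the cost continuous under weak convergence of $\eta_n$, yielding the minimizer.

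For dual attainment (part~(1)) and the complementary slackness of part~(\ref{(2)}), I would exploit the strong duality $\mathcal{P}_1=\mathcal{D}_1$ (established in the preceding duality section) to realize the dual maximizer as a pair of Lagrange multipliers of the primal constraints: the marginal equation (\ref{eqn:Eulerian_target}) produces $\psi$ dual to $\nu\in L^2(O)$, and the evolution equation (\ref{eqn:Skorokhod_evolution}) produces $J$ dual to the PDE source. Fenchel--Rockafellar duality applied to the convex program $\mathcal{P}_1$, with the energy estimates above serving as coercivity, yields $(\psi,J)\in H^1_0(O)\times\mathcal{X}$ attaining $\mathcal{D}_1$. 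The complementary slackness identity is then the saddle-point/KKT condition: subtracting the dual value from the primal at the optimum, the cross terms vanish precisely when the two inequalities in $V_\psi[J]\ge 0$ are saturated on the supports of $\rho$ and $\eta$ respectively.

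Finally, for the identification in part~(\ref{(3)}), since $\psi\in H^1_0(O)$ is defined only up to a null set, I would pass to the lower semicontinuous precise representative, which is the natural obstacle for optimal stopping. Then I invoke the parabolic obstacle theory of Bensoussan--Lions \cite{bensoussan2011applications}: the minimal weak solution to $V_\psi[J]=0$ with Dirichlet boundary data coincides with the value function $J_\psi$ of (\ref{eqn:dynamic_programming}). Because any admissible $J$ satisfies $J\ge J_\psi$, replacing $J$ by $J_\psi$ only decreases $J(0,\cdot)$ and hence improves the dual objective $-\int J(0,\cdot)\,d\mu$, so optimality forces $J=J_\psi$. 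The principal technical obstacle is the dual attainment at the stated Sobolev regularity rather than merely in a distributional space: transferring the primal energy estimate to the multiplier $J$ through the HJB variational inequality is where the bounded Lagrangian hypothesis \ref{itm:bounded} and the bounded convex geometry \ref{itm:convex_support} enter crucially, and this is the step in which the PDE approach diverges decisively from the purely probabilistic one.
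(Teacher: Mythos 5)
Your sketch of primal attainment (part (2)) is essentially the paper's argument: energy estimate from pairing the evolution equation with $\eta$ itself, exponential weight controlled by Poincar\'e, weak compactness, and admissibility preserved in the limit. (One sign slip: the natural weight for $\eta$ is $\e{+2\gamma t}$, not $\e{-\gamma t}$ --- $\eta$ \emph{decays} like $\e{-\lambda t}$ so the weight must grow, while it is $J$ that lives in the space with weight $\e{-2\gamma t}$.) Your part (3) --- passing to the lower semicontinuous representative and invoking the Bensoussan--Lions identification of the minimal weak solution with the value function --- also matches the paper's Corollary~\ref{cor:martingale_attainment}.

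The gap is in dual attainment. You appeal to Fenchel--Rockafellar to deliver the maximizer $(\psi,J)$, but Fenchel--Rockafellar gives attainment on the \emph{opposite} side: in the paper's Theorem~\ref{thm:Eulerian_weak_duality}, the roles are set up so that the $\sup_{\psi,J}$ is encoded in the \emph{infimum} over $(f,g)$ (which is not attained by the theorem), and the $\max$ guaranteed by the theorem is the primal variable $(\eta,\rho)$. You also write that ``the energy estimates above serve as coercivity,'' but those estimates bound the primal density $\eta$, not the dual pair $(\psi,J)$: the constraint $V_\psi[J]\ge 0$ by itself does not control $\psi$ or $J$ from above or below, since one can subtract any large positive function from $\psi$ or add a nonnegative caloric supersolution, or a superharmonic function, to both. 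The heart of the paper's proof is Proposition~\ref{prop:dual_estimates}, which shows that without changing the dual value one may normalize any feasible $(\psi,J)$ through three operations --- subtracting a harmonic extension to impose the Dirichlet condition, subtracting the superharmonic envelope to force $\psi\le J\le 0$, and solving a further variational inequality to force $-\tfrac12\Delta\psi\ge -D$ --- and it is exactly conditions \textbf{ii)} and \textbf{iii)} plus the maximum principle that give the uniform $H^1_0$ bound $\int|\nabla\psi^i|^2 = -\int\psi^i\,\Delta\psi^i \le DK|O|$ on a maximizing sequence. This a priori estimate is what compactness then needs; your proposal contains no mechanism to produce it, so the extraction of a convergent subsequence in $H^1_0(O)\times\mathcal{X}$ does not go through as written.

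Once one has dual attainment via the normalized sequence, the complementary slackness identity is obtained not as an abstract KKT condition but by directly substituting the regularity-class $J$ and $\psi$ as test functions into the weak forms of \eqref{eqn:Skorokhod_evolution} and \eqref{eqn:Eulerian_target} and combining with ${\mathcal P}_1={\mathcal D}_1$, as in Theorem~\ref{thm:strong_duality}; this is a concrete computation rather than an appeal to a saddle-point theorem, and it is only possible because $J\in\mathcal{X}$ and $\eta\in L^2_\gamma(\R^+;H_0^1(O))$ have enough regularity to pair.
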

It is clear from \text{(2)} that the optimal process must  stop in the space-time set 
\begin{align*}F:=\{(t, x);\ J(t,x)=\psi(x)\},
\end{align*}
 i.e., the coincidence set of the equation $V_\psi[J]=0$ viewed as the obstacle problem for $J$ with obstacle $\psi$.  This enables us in Section~\ref{sec:hitting_times} to  
 address  cases where the optimal stopping time is natural and unique. Both properties will follow if the optimal stopping time is given by the hitting time of a barrier, which we establish under strict monotonicity assumptions on $L$. In this case, the barrier is the coincidence set $F$. For example, 
 if the Lagrangian $L$ strictly increases in time, then one can define the function 
$  s(x)=\inf\{t;J_\psi(t,x)=\psi(x)\}$ 
and the barrier 
$F=R_c:= \{(t,x);\ t\geq s(x)\}$, 
 so that the optimal time is the first hitting-time of $R_c$, that is $\tau= \inf\{t; (t,B_t)\in R_c\}.$

We consider in the final section  more explicit behavior of the free boundary, by deriving another equation for the function $s(x)$. We will also discuss connection of our analysis to a setting involving backward stochastic differential equations (BSDE). A couple of concrete examples are provided for demonstration. 
	Finally, a few additional material are given in  Appendix \ref{S:appendix}, including a proof of weak duality and various notions of solutions to variational inequalities and their properties that are used throughout the paper.
	
The methods in this paper can be extended to other processes, for instance when Brownian motion is replaced by a Markov process generated by a uniformly elliptic operator $L$.  In this case Laplacian, $\frac{1}{2}\Delta$, is replaced by $L$, and the subharmonic functions should be replaced by $L$-subharmonic; see for example \cite{bensoussan2011applications}.  We deal only with Brownian motion for simplicity and will address the following important extensions of our methods in subsequent work.

\subsection{When cost functions are not induced by a Lagrangian}
In a recent paper \cite{GKP3}, we show that the methods developed in this paper can be used to deal with other important higher dimensional cases, where the cost is a function of the initial and end points, i.e.\ $c(x,y) = f (|x-y|)$. In other words, with problems such as 	
	\begin{align} \label{eqn:classical.cost}
		{\mathcal P}_0(\mu,\nu) :=  \inf
		\Big\{\mathbb{E}\Big[c(B_0,B_\tau)\Big];\ \tau \in {\mathcal T}_O(\mu, \nu)
		\Big\}. 
	\end{align}
	For other related papers in 1 dimension, we refer to \cite{beiglbock2017monotone} and \cite{huesmann2018monotonicity}.

\subsection{Control and mean field extensions}

This paper plays an important role in a larger research program on dynamic optimization problems with target constraints.  Deterministic control problems have already been the subject of \cite{GKP} by the authors.  While here we study Brownian motion, we have in mind important more general controlled stochastic differential equations such as, 
$$
	dX_t = \alpha_t dt + dB_t.
$$
The Lagrangian will then be assumed to also depend on the drift. One can also incorporate mean field interactions through the density and stopping distribution.  Such dependence is natural in the Eulerian formulation where we consider the Fokker-Planck equation
$$
	\rho(t,x)+\partial_t \eta(t,x)+\nabla \cdot a(t,x) \eta(t,x)=\frac{1}{2}\Delta \eta(t,x),
$$
with the cost
$$
	\int_{\R^d}\int_{\R^+}\Big[L\big(t,x,a(t,x)\big)\eta(t,x)+F\big(\eta(t,x)\big)+G\big(\rho(t,x)\big)\Big]dtdx,
$$
and the target constraint remains $\int_{\R^+}d\rho = \nu$.
The resulting Hamilton-Jacobi-Bellman inequality appears as
\begin{align}
	J(t,x)-\psi(x)&\geq -G'\big(\rho(t,x)\big) &\ \forall\ (t,x)\nn\\
	J(t,x)-\psi(x)&= -G'\big(\rho(t,x)\big),&\ {\rm if}\ \rho(t,x)>0\nn\\
	-\partial_t J(t,x) - A\cdot \nabla J(t,x)-\frac{1}{2}\Delta J(t,x)&\geq -L(t,x,A)-F'(\eta(t,x)),&\ \forall\ (t,x,A)\nn\\
	-\partial_t J(t,x) - A\cdot \nabla J(t,x)-\frac{1}{2}\Delta J(t,x)&= -L(t,x,A)-F'(\eta(t,x)),&\ {\rm if}\ (t,x,A) \in {\mathcal S},\nn
\end{align}
where 
$${\mathcal S}:=\{(t,x,A); 
\hbox{$\eta (t, x) >0$ and $A$  maximizes $A\cdot \nabla J(t,x)-L(t,x,A)$}. 
$$
This will be the subject of a forthcoming paper \cite{GKP2}. \\

\noindent\textbf{Acknowledgment:} We are thankful to Martin Barlow and Ed Perkins for helpful comments and references. We also thank A. Cox and M. Huesmann for pointing out several papers related to this work, and an anonymous referee for pertinent comments and pointing out additional related work. 

\section{Duality, Eulerian Embeddings and Variational Inequalities} \label{sec:duality}

This section is devoted to the proof of the equalities,
\begin{equation}\label{eqn:dual_equalities}
		{\mathcal P}_0(\mu, \nu)={\mathcal P}_1(\mu,\nu)={\mathcal D}_0(\mu, \nu)={\mathcal D}_1(\mu,\nu). 
	\end{equation}		

\subsection{Eulerian embedding of the stopping problem.}
Formally, we let $B_t$ be a Brownian motion with  $B_0\sim \mu$ and we consider the randomized stopping times $\tau$ that satisfy $B_\tau \sim\nu$.  We consider the filtered probability space $(\Omega,\mathcal{F},\{\mathcal{F}_t\}_{t\in \R^+},\mathbb{P})$ where $\Omega =C(\R^+;\R^d)$, $\mathbb{P}$ the Wiener measure with initial distribution $\mu$, and $\{\mathcal{F}_t\}_{t\in \R^+}$ the natural filtration of the Brownian motion. We define the space of randomized stopping times with finite expectation as
{\small
	\begin{align}
		\mathcal{R}:=\Big\{\alpha: \Omega \rightarrow \mathcal{M}(\R^+)\, ; \, &\,  \alpha\geq 0, \,  \alpha(\R^+)=1, \, \int_{\R^+}t \, d\alpha<\infty \, \hbox{and  $\alpha([0,t])$ is   $\mathcal{F}_t$-measurable } \forall\, t\Big\}\nn 
	\end{align}
	}
	\hspace{-0.3em}where $\mathcal{M}(\R^+)$ denotes the space of Radon measures on $\R^+$. 
	We often abbreviate a randomized stopping time by $\tau\sim \alpha$ such that
	$$
		\mathbb{E}\big[f(\tau)\big]=\mathbb{E}\Big[\int_{\R^+}f(t)\alpha(dt)\Big].
	$$
	The condition $B_\tau\sim \nu$ is now equivalent to
	$$
		\mathbb{E} \big[g(B_\tau)\big] = \int_{\overline{O}} g(z)\nu(dz),\ \forall g\in C(\overline{O}).
	$$
	We will say that a subset $Q\subset \R^+\times \Omega$ is almost sure for a randomized stopping time if
	$$
		\mathbb{E}\big[\int_{\R^+} \mathbf{1}\{t\in Q\} d\alpha \big]=0
	$$
	where $\mathbf{1}$ is the indicator function of the set.  Often this will appear instead as an abbreviated form, i.e.\ $\mathbf{1}\{t\leq \eta\}:=\mathbf{1}\{t\in Q_\eta\}$ where $\eta$ is a stopping time and $Q_\eta=\{(t,\omega);\ t\leq \eta(\omega)\}$.

	For each $(t,x)$ we also consider a filtered probability space of Brownian motions beginning at $B_t=x$, and we use $\mathbb{E}^{t,x}$ to denote the expectation.  We let $\mathcal{R}^{t,x}$ denote the space of randomized stopping times, larger than or equal to $t$, with finite expectation on this probability space. 

	We consider $C_{-\gamma}(\R^+\times \overline{O})$ given as in \ref{itm:continuous}, i.e.\ the continuous functions satisfying
	\begin{align*}
 			\hbox{$ \e{-\gamma t} w(t,x) \rightarrow 0$ as $t\to \infty$,   uniformly in $x$,}
		\end{align*}
		where $\gamma$ is fixed such that $0<\gamma<\lambda$ for $\lambda$ the Poincar\'{e} constant of $O$.
		 This is a Banach space with the norm 
	$$
		\|w\|_{C_{-\gamma}(\R^+\times \overline{O})}:=\sup\big\{|\e{-\gamma t}w(t,x)|;\ (t,x)\in \R^+\times \overline{O}\big\}<\infty, 
	$$
	 whose dual space $\mathcal{M}_\gamma(\R^+\times \overline{O})$ 
	  is the finite (signed) Radon measures with $\gamma$-exponential decay.  
	We will also let $C_{-\gamma}^{1,2}(\R^+\times \overline{O})$ denote the functions whose first derivative in time and second derivatives in space lie in $C_{-\gamma}(\R^+\times \overline{O})$.
	
	We consider the `very weak' evolution equation for $\eta,\rho\in \mathcal{M}_\gamma(\R^+\times \overline{O})$, $\eta,\rho\geq 0$, that satisfy (\ref{eqn:Skorokhod_evolution}),	which is formally equivalent to 
	$$\hbox{$\eta\geq 0$ and $\partial_t\eta\leq \frac{1}{2}\Delta \eta$ with $\eta(0,\cdot)=\mu$ in the sense of distributions. }
	$$  
	The stopping measure  $\rho$ 
	 allows us to encode the target constraint as (\ref{eqn:Eulerian_target}), which is formally given by $
	 	\rho(\R^+,\cdot)=\nu.
	$
\begin{definition}
	We say that $(\eta,\rho)$ is admissible if $\eta\in \mathcal{M}_\gamma(\R^+\times\overline{O})$, $\eta\geq0$, $\rho\in \mathcal{M}_\gamma(\R^+\times \overline{O})$, $\rho\geq0$, and  {\normalfont(\ref{eqn:Eulerian_target})} holds and  {\normalfont(\ref{eqn:Skorokhod_evolution})} holds $\forall\ w \in C_{-\gamma}^{1,2}(\R^+\times \overline{O})$.
\end{definition}
Note that if $L\in C_{-\gamma}(\R^+\times \overline{O})$ any admissible pair has finite cost in \eqref{eqn:Eulerian_cost}.
	We recall the two formulations of the primal problem, from (\ref{eqn:Skorokhod_cost}) we have
	$$
		{\mathcal P}_0(\mu,\nu) :=  \inf
		\Big\{\mathbb{E}\Big[ \int_0^\tau L(t,B_t)dt\Big];\ \tau \in {\mathcal T}_O(\mu, \nu)
		\Big\},
	$$
	where $\tau_O(\mu,\nu)$ is the $\tau\in \mathcal{R}$, such that $B_t\in O$ for $t<\tau$ almost surely, with $B_0\sim \mu$ and $B_\tau\sim \nu$.  The second formulation  is given in (\ref{eqn:Eulerian_cost}) as the linear optimization problem over $(\eta,\rho)$ satisfying (\ref{eqn:Skorokhod_evolution}) and (\ref{eqn:Eulerian_target}),
	$$
	{\mathcal P}_1(\mu,\nu) = \inf_{\eta,{\rho}}\Big\{\int_{\overline{O}}\int_{\R^+} L(t,x)\eta(dt,dx);\ (\eta,{\rho})\ {\rm is\ admissible}\Big\}. 
	$$

 	We will eventually show that ${\mathcal P}_0(\mu,\nu)={\mathcal P}_1(\mu,\nu)$. In the following proposition, we show that one can associate an admissible pair to any randomized stopping time, which will mean that ${\mathcal P}_0(\mu,\nu)\geq {\mathcal P}_1(\mu,\nu)$.
	\begin{proposition}\label{thm:stochastic_embedding}
 		Given $\mu$ and $\nu$ satisfying {\normalfont \ref{itm:convex_support}},
		 if $\tau $ is a stopping time in ${\mathcal T}(\mu, \nu)$, then  	
	there is an admissible pair $(\eta,{\rho})$ such that for every $g\in C_{-\gamma}(\R^+\times \overline{O})$,
	$$
 			\E\big[g(\tau,B_\tau)\big]=\int_{\overline{O}}\int_{\R^+}g(t,x){\rho}(dt,dx),
 		$$
		and 
 	\begin{align}
 			\E\Big[\int_0^\tau g(t,B_t)dt\Big]   
 			=\ \int_{\overline{O}}\int_{\R^+}g(t,x)\eta(dt,dx).\nn
 		\end{align}	
		
	Furthermore, there is an $\eta$-measurable map $(t,x)\rightarrow \tau^{t,x}\in \mathcal{R}^{t,x}$ such that
	\begin{align}\label{eqn:conditional_expectation}
		\int_{\overline{O}}\int_{\R^+}\mathbb{E}^{t,x}\Big[A\big({\tau^{t,x}},B_{\tau^{t,x}}\big)\Big]\eta(dt,dx)= \int_{\R^+}\mathbb{E}\big[\mathbf{1}\{t\leq \tau\}A\big(\tau,B_\tau\big)\big]dt
	\end{align}
	for all $A_{-\gamma}\in C(\R^+;\R^d)$.
 	\end{proposition}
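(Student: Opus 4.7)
The plan is to define $\rho$ and $\eta$ as the natural pushforward and occupation measures associated with $\tau$, then verify the admissibility conditions and the disintegration statement separately. For the first assertion, set
\begin{equation*}
\rho(\,\cdot\,) := \mathbb{E}\bigl[\mathbf{1}_\cdot(\tau,B_\tau)\bigr],\qquad \eta(\,\cdot\,):=\mathbb{E}\Bigl[\int_0^\tau \mathbf{1}_\cdot(t,B_t)\,dt\Bigr],
\end{equation*}
so that the two integration identities for $g\in C_{-\gamma}(\R^+\times\overline{O})$ hold immediately by definition and Fubini, and the target constraint \eqref{eqn:Eulerian_target} is just a rewriting of $B_\tau\sim\nu$. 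To see that $\eta,\rho\in \mathcal{M}_\gamma$, I would use the Poincar\'e constant $\lambda$ of $O$: since $\gamma<\lambda$, a standard principal-eigenfunction bound gives $\sup_{x\in\overline O}\mathbb{E}^x[\e{\gamma\tau_O}]<\infty$, and because $\tau\le\tau_O$ almost surely (as is implicit from the $\mathcal{T}_O$ convention used throughout the Eulerian setting), we obtain $\int \e{\gamma t}\,d\eta \le \gamma^{-1}\mathbb{E}[\e{\gamma\tau_O}]<\infty$, with the analogous estimate for $\rho$.

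For the `very weak' evolution identity \eqref{eqn:Skorokhod_evolution}, I would apply It\^o's formula to $w(t,B_t)$ stopped at $\tau$ for any test function $w\in C^{1,2}_{-\gamma}(\R^+\times \overline O)$; the stochastic integral has zero expectation (by the exponential moment just established combined with the growth bound on $\nabla w$), so taking expectations and rearranging produces exactly the required identity against $\eta$, $\rho$, and $\mu$.

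The disintegration step is the more delicate part. I would introduce the finite measure
\begin{equation*}
d\Lambda(t,\omega):=\mathbf{1}\{t\le\tau(\omega)\}\,dt\,d\mathbb{P}(\omega)
\end{equation*}
on $\R^+\times\Omega$, whose pushforward under $(t,\omega)\mapsto (t,B_t(\omega))$ is precisely $\eta$. The disintegration theorem on the Polish space $\R^+\times\Omega$ yields a family of probability kernels $\kappa_{t,x}$ with $\Lambda(dt,d\omega)=\eta(dt,dx)\,\kappa_{t,x}(d\omega)$ and $\kappa_{t,x}$ concentrated on $\{B_t=x\}$. The strong Markov property of Brownian motion then shows that for $\eta$-a.e.\ $(t,x)$ the post-$t$ process under $\kappa_{t,x}$ has the law of Brownian motion started at $x$ at time $t$; the conditional law of $\tau$ under $\kappa_{t,x}$ is then a randomized stopping time $\tau^{t,x}\in\mathcal{R}^{t,x}$ with finite expectation. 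Identity \eqref{eqn:conditional_expectation} follows by computing $\int A(\tau,B_\tau)\,d\Lambda$ in two ways: Fubini in $(t,\omega)$ produces the right-hand side, while the $\eta$-disintegration produces the left-hand side.

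The main obstacle I expect is this last step, specifically the careful justification of the disintegration: showing that $\kappa_{t,x}$ really does identify, after the natural time-shift, with the Wiener measure in $\mathcal{R}^{t,x}$ so that the expectations $\mathbb{E}^{t,x}$ are meaningful for $\eta$-a.e.\ $(t,x)$, and verifying $\eta$-measurability of $(t,x)\mapsto\tau^{t,x}$ as a map into the Polish space of randomized stopping times. This reduces to the strong Markov property combined with regular conditional distribution arguments, but the interaction with the randomized stopping-time formalism (where one must replace $\tau$ with its associated kernel $\alpha$) and the conditioning on the event $\{t\le\tau\}$ together with $\{B_t=x\}$ will require some care to make rigorous.
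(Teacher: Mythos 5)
Your plan matches the paper's proof: $\rho$ and $\eta$ are defined as Riesz representations of $g\mapsto\E[g(\tau,B_\tau)]$ and $g\mapsto\E[\int_0^\tau g(t,B_t)\,dt]$, the evolution identity comes from It\^o's formula, and the conditional kernels come from disintegrating a product measure combined with the Markov property. The paper handles the ``delicate step'' you flag by defining the auxiliary measure directly on $\R^+\times\overline O\times\Omega\times\R^+$ with a separate coordinate for the randomized time $\tau$ (via the pairing $\int_{\R^+}\mathbb{E}[\mathbf{1}\{t\le\tau\}H^{t,B_t}_\tau]\,dt$), rather than on $\R^+\times\Omega$ as in your $\Lambda$, and disintegrates it twice --- once against $H=f(t,x)$ to factor out $\eta(dt,dx)$, once against $H=G^{t,x}(\omega)$ to identify the $\Omega$-marginal with $\mathbb{P}^{t,x}$ --- so that the randomized-kernel bookkeeping, $\alpha^{t,x}\in\mathcal{R}^{t,x}$, and $\eta$-measurability all fall out of a single disintegration; this is precisely the modification to your $\Lambda$ you anticipated would be required.
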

 	\begin{proof}
 		The expectation $\E[g(\tau,B_\tau)]$ defines a continuous linear functional on $C_{-\gamma}(\R^+\times \overline{O})$ thanks to the continuity of the Brownian paths.  
		Thus $\E[g(\tau,B_\tau)]$ is represented by ${\rho}\in \mathcal{M}_\gamma(\R^+\times \overline{O})$ such that for $g\in C_{-\gamma}(\R^+\times \overline{O})$,
 		$$
 			\E[g(\tau,B_\tau)]=\int_{\overline{O}}\int_{\R^+}g(t,x){\rho}(dt,dx).
 		$$
 		We immediately have that $\rho$ satisfies (\ref{eqn:Eulerian_target}) if and only if $B_\tau\sim \nu$.  Similarly, the density of the process $\eta$ is the representation of the linear functional
 		$$
 			\E\Big[\int_0^\tau g(t,B_t)dt\Big] =\int_{\overline{O}} \int_{\R^+} g(t,x)\eta(dt,dx).
 		$$
 		Ito's formula shows that if $w\in C_{-\gamma}^{1,2}(\R^+\times \overline{O})$ then
 		\begin{align}
 			\E\big[w(\tau,B_\tau)\big]  =&\ \E\Big[\int_0^\tau\Big(\frac{\partial}{\partial t}w(t,B_t)+ \frac{1}{2}\Delta w(t,B_t)\Big) dt\Big] + \E\big[w(0,B_0)\big]\nn\\
 			=&\ \int_{\overline{O}}\int_{\R^+}\Big(\frac{\partial}{\partial t}w(t,x)+ \frac{1}{2}\Delta w(t,x)\Big)\eta(dt,dx)\nn\\
 			&\ +\int_{\overline{O}}w(0,y)\mu(dy).\nn
 		\end{align}
 		Combining this expression with the representation of $\E[w(\tau,B_\tau)]$ by $\rho$ shows that $(\eta,{\rho})$ satisfy (\ref{eqn:Skorokhod_evolution}). 
		
		To prove (\ref{eqn:conditional_expectation}), given $\tau$ a randomized stopping time representing $\alpha:\Omega\rightarrow \mathcal{M}(\R^+)$, we define a measure $\mathbb{P}^\alpha$ on $\R^+\times \overline{O}\times \Omega\times \R^+$ by dual representation such that
		$$
			\int_{\R^+}\mathbb{E}\big[\mathbf{1}\{t\leq \tau\}H^{t,B_t}_\tau\big]dt = \int_{\R^+}\int_{\overline{O}}\int_\Omega \int_{\R^+}H^{t,x}_\tau(\omega)\mathbb{P}^\alpha(dt,dx,d\omega,d\tau)
		$$
		for all continuous functions $H$ on $\R^+\times \overline{O}\times \Omega \times \R^+$ with $\gamma$-exponential growth in the $t$ and $\tau$ variables. For $H^{t,x}_\tau(\omega)= f(t,x)$ we have, using the first part of the proposition,
		\begin{align*}
 			\int_{\overline{O}}\int_{\R^+}f(t,x)\eta(dt,dx)=&\ \E\Big[\int_0^\tau f(t,B_t)dt\Big]\\
 			 =&\ \int_{\R^+}\mathbb{E}\big[\mathbf{1}\{t\leq \tau\}f(t,B_t)\big]dt\\
 			=&\  \int_{\R^+}\int_{\overline{O}}\int_\Omega \int_{\R^+}f(t,x)\mathbb{P}^\alpha(dt,dx,d\omega,d\tau),
		\end{align*}
		and thus $\mathbb{P}^\alpha$ disintegrates as $\tilde{\alpha}^{t,x}(d\tau,d\omega)\eta(dt,dx)$.  Furthermore, for $H^{t,x}_\tau(\omega)=G^{t,x}(\omega)$, we have
		\begin{align*}
			\int_{\R^+}\int_{\overline{O}}\mathbb{E}^{t,x}\big[G^{t,x}\big]\eta(dt,dx) =&\ \int_{\R^+}\mathbb{E}\big[\mathbf{1}\{t\leq \tau\}G^{t,B_t}\big]dt\\
			=&\ \int_{\R^+}\int_{\overline{O}}\int_\Omega \int_{\R^+}G^{t,x}(\omega)\tilde{\alpha}^{t,x}(d\tau,d\omega)\eta(dt,dx),
		\end{align*}
		and we have that for $\eta$-a.e.\ $(t,x)$, the measure disintegrates further as 
		$$
			\mathbb{P}^\alpha(dt,dx,d\omega,d\tau)=\alpha^{t,x}(\omega)(d\tau)\mathbb{P}^{t,x}(d\omega)\eta(dt,dx).
		$$ 
		For $\eta$-a.e.\ $(t,x)$ and $\mathbb{P}^{t,x}$ a.e.\ $\omega$ we have that $\alpha^{t,x}(\omega)\geq 0$, $\int_t^\infty\alpha^{t,x}(\omega)(d\tau)=1$, and for $T\geq t$, $\omega \mapsto \int_t^T \alpha^{t,x}(\omega)(d\tau)$ is $\mathcal{F}_T^{t,x}$ measurable.  Thus we have that for $\eta$-a.e.\ $(t,x)$, $\alpha^{t,x}\in \mathcal{R}^{t,x}$, measurability follows from the disintegration, and formula (\ref{eqn:conditional_expectation}) follows with $H^{t,x}_\tau(\omega)=A(\tau,B_\tau(\omega))$ completing the proof.
 	\end{proof}
	
	\subsection{Eulerian dual $\mathcal{D}_1(\mu, \nu)$ and its weak duality}
We now make the dual problem $\mathcal{D}_1(\mu, \nu)$  more precise by considering 
\begin{align} \label{eqn:dual_cost.bis}
		{\mathcal D}_1(\mu,\nu) := \sup_{\psi,J}\Big\{\int_{\R^d}\psi(z)\nu(dz) - \int_{\R^d}J(0,y)\mu(dy);\ V_\psi[J]\geq 0\Big\},
\end{align}
where the end-potential, $\psi\in C(\overline{O})$, and the value-function, $J\in C_{-\gamma}^{1,2}(\R^+\times \overline{O})$. The maximization problem is posed over supersolutions $V_\psi[J]\geq 0$ for the Hamilton-Jacobi quasi-variational operator $V_\psi$ of (\ref{eqn:variational_operator}).

	In the remainder of this section we prove the weak duality, ${\mathcal P}_1(\mu,\nu)={\mathcal D}_1(\mu,\nu)$, by a method standard to the Monge-Kantorovich duality of optimal transportation.  We will then show that  ${\mathcal D}_1(\mu,\nu)={\mathcal D}_0(\mu,\nu)$ and conclude \eqref{eqn:dual_equalities} via the result  of  \cite{beiglboeck2017optimal} that shows ${\mathcal D}_0(\mu,\nu)={\mathcal P}_0(\mu,\nu)$.  
	 As the latter was proven  for a slightly different but equivalent problem in Theorem 4.2 \cite{beiglboeck2017optimal} (see also \cite{guo2016monotonicity} for a formulation similar to ours),  we shall 
	sketch  an alternative proof in Theorem \ref{thm:probabilistic_weak_duality}  in the appendix.

	\begin{theorem}\label{thm:Eulerian_weak_duality}
		Given {\normalfont\ref{itm:convex_support}} and {\normalfont\ref{itm:continuous}}, 
		 we have ${\mathcal D}_1(\mu,\nu)={\mathcal P}_1(\mu,\nu)$.
	\end{theorem}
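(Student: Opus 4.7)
The plan is to prove the two inequalities $\mathcal{D}_1 \le \mathcal{P}_1$ and $\mathcal{P}_1 \le \mathcal{D}_1$ separately, the first via a direct test-function calculation and the second via a Fenchel--Rockafellar argument adapted to the weighted spaces $C_{-\gamma}$ and $C^{1,2}_{-\gamma}$.

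For the easy direction, I would fix an admissible pair $(\eta,\rho)$ and any dual pair $(J,\psi)$ with $V_\psi[J]\ge 0$, and use $J$ itself as test function in the weak evolution identity \eqref{eqn:Skorokhod_evolution} to obtain
\[
-\int_{\overline{O}}J(0,y)\,\mu(dy) = \int(\partial_t J+\tfrac{1}{2}\Delta J)\,\eta(dt,dx) - \int J\,\rho(dt,dx).
\]
Inserting the two pointwise inequalities $\partial_t J+\tfrac{1}{2}\Delta J\le L$ and $J(t,x)\ge\psi(x)$ (both consequences of $V_\psi[J]\ge 0$) together with the target marginal \eqref{eqn:Eulerian_target} yields $\int_{\overline{O}}\psi\,d\nu-\int_{\overline{O}}J(0,\cdot)\,d\mu\le\int L\,d\eta$, and taking inf over admissible $(\eta,\rho)$ and sup over $(J,\psi)$ gives $\mathcal{D}_1\le\mathcal{P}_1$.

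For the nontrivial direction, I would cast $-\mathcal{D}_1=\inf_X[F+G\circ A]$ in the Fenchel--Rockafellar framework. Set $X=C^{1,2}_{-\gamma}(\R^+\times\overline{O})\times C(\overline{O})$, $Y=C_{-\gamma}(\R^+\times\overline{O})\times C_{-\gamma}(\R^+\times\overline{O})$, take the continuous linear operator $A(J,\psi)=(J-\psi,\,-\partial_t J-\tfrac{1}{2}\Delta J)$, the continuous linear functional $F(J,\psi)=\int J(0)\,d\mu-\int\psi\,d\nu$, and let $G$ be the indicator of the closed convex set $\{(g_1,g_2)\in Y:\,g_1\ge 0,\ g_2\ge -L\}$. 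A direct Legendre computation shows $G^*(-\rho,-\eta)=\int L\,d\eta$ precisely when $\eta,\rho\ge 0$ (and $+\infty$ otherwise), while $F^*(-A^*(-\rho,-\eta))=0$ precisely when $(\eta,\rho)$ satisfies both \eqref{eqn:Skorokhod_evolution} and \eqref{eqn:Eulerian_target}, i.e.\ exactly when $(\eta,\rho)$ is admissible. Strong duality in Fenchel--Rockafellar then identifies the dual value with $-\mathcal{P}_1$, closing the chain.

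The main technical obstacle is the constraint-qualification step, because the positive cone in $C_{-\gamma}$ has empty interior: a strictly positive element in this norm would have to dominate $\delta e^{\gamma t}$ uniformly, which is incompatible with the decay $e^{-\gamma t}g\to 0$ required of elements of $C_{-\gamma}$. To circumvent this I would proceed by approximation, first truncating the time horizon to $[0,T]$, where the positive cone in $C([0,T]\times\overline{O})$ has nonempty interior, so the strictly feasible point $J_0\equiv M$ (large), $\psi_0\equiv 0$ provides qualification thanks to $L\ge 0$, and standard Fenchel--Rockafellar applies. Duality on $[0,T]$ then passes to the unrestricted duality as $T\to\infty$, using the $\gamma$-exponential control of \ref{itm:continuous} to dominate the tails of both $\eta$ and $\int L\eta$. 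An alternative is to perturb $L$ by adding $\epsilon$ (or work on a slightly weaker weight $C_{-\gamma'}$, $\gamma'<\gamma$) to enlarge the admissible dual cone, apply Fenchel--Rockafellar, and let $\epsilon\to 0^+$ by monotonicity of $\mathcal{P}_1$ and $\mathcal{D}_1$ in $L$. Either way, the primal is known to be finite and nonempty through Proposition \ref{thm:stochastic_embedding}, which lifts any stopping time in $\mathcal{T}_O(\mu,\nu)$ to an admissible pair.
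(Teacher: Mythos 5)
Your proposal takes essentially the same route as the paper: Fenchel--Rockafellar duality applied in the weighted spaces $C_{-\gamma}$, with the cost constraint encoded as an indicator of a convex cone and the admissibility constraints \eqref{eqn:Skorokhod_evolution}, \eqref{eqn:Eulerian_target} emerging from the Legendre transform. The paper folds both constraints into the function $\Xi$ directly rather than factoring through a linear operator $A$, and does not separate out the easy inequality $\mathcal{D}_1 \le \mathcal{P}_1$, but these are organizational differences, not different ideas.

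What is genuinely worth noting is your treatment of the constraint qualification. You correctly observe that the positive cone $\{g \ge 0\}$ (and likewise $\{f \ge -L\}$) has empty interior in $C_{-\gamma}(\R^+\times\overline{O})$: any $h$ with $e^{-\gamma t}h \to 0$ that is a candidate center of a ball would have to satisfy $h(t,x) \ge \delta e^{\gamma t}$ on arbitrarily long time windows, contradicting the decay requirement. This is exactly the point the paper acknowledges in one sentence (``the only subtlety in checking continuity of $\Theta$\dots'') and then dismisses with a remark about $\mathcal{M}_\gamma$-decay of admissible pairs that does not in fact deliver interior-point continuity of $\Theta$ in the sense required by Theorem 1.9 of \cite{V1}. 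Your diagnosis of the difficulty is thus sharper than the paper's. However, be careful with your proposed remedies. The perturbation $L \mapsto L+\epsilon$ does not resolve the problem: the cone $\{f \ge -L-\epsilon\}$ still has empty interior in $C_{-\gamma}$, because $f+L+\epsilon$ stays bounded while $\delta e^{\gamma t}$ does not, so the same obstruction persists; the same applies to shifting $\gamma$ to $\gamma'<\gamma$, which only strengthens the norm. The truncation to $[0,T]$ is the right idea since the positive cone in $C([0,T]\times\overline{O})$ has nonempty interior, but then the passage $T\to\infty$ (controlling the tail of $\eta$ against $L$ and showing the truncated primal/dual values converge to $\mathcal{P}_1,\mathcal{D}_1$) must actually be carried out; it is not automatic from \ref{itm:continuous} alone and is a nontrivial piece of the argument. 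So: same overall strategy as the paper, a correctly identified (and genuinely delicate) gap in the qualification step, one proposed fix that fails and one that is plausible but incomplete.
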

	\begin{proof}
		This follows from the general Fenchel-Rockafellar duality \cite{V1} with $\Theta,\Xi:C_{-\gamma}(\R^+\times \overline{O})^2\rightarrow \R$ by  
\begin{align*}
 \Theta(f,g) := \begin{cases}
     0 & \text{if  $f\geq -L$ and $ g\geq 0$}, \\
    \infty  & \text{otherwise}.
\end{cases}
\end{align*}
	and, minimizing over $\psi\in C(\overline{O})$ and $J\in C_{-\gamma}^{1,2}(\R^+\times \overline{O})$,
		$$
			\Xi(f,g) := -\sup_{\psi,J}\Big\{\int_{\overline{O}} \psi(z)\nu(dz) - \int_{\overline{O}} J(0,y)\mu(dy);\ -\frac{\partial}{\partial t} J -\frac{1}{2}\Delta J\geq f,\ J-\psi\geq g\Big\}.
		$$
		Both $\Xi$ and $\Theta$ are convex and lower semicontinuous and by definition we have that ${\mathcal D}_1(\mu,\nu)=-\Xi(-L,0)$. 

		If there is $(f_1,g_1)$ such that $\Xi(f_1,g_1)=-\infty$, then $\Xi(f,g)=-\infty$ for all $f$ and $g$, and there cannot be an admissible $(\eta,\rho)$ with finite cost, thus ${\mathcal D}_1(\mu,\nu)={\mathcal P}_1(\mu,\nu)=+\infty$. 

		If $\Xi(f,g)$ is finite for all $f$ and $g$, then $\Theta$ is continuous at $f$ and $g$ if $f>-L$ and $g>0$ (identically $0$ in a neighborhood) and the Fenchel-Rockafellar duality theorem 
		(Theorem 1.9 in \cite{V1}) states that an optimizer $(\eta,\rho)$ exists and
		$$
			\max_{\eta,\rho}\big[ - \Theta^*(-\eta,-\rho)-\Xi^*(\eta,\rho)\big] = \inf_{f,g}\big[\Theta(f,g)+\Xi(f,g)\big].  
		$$
		The infimum on the right side is attained at $f=-L$ and $g=0$, in which case $\Theta(f,g)+\Xi(f,g)=-{\mathcal D}_1(\mu,\nu)$.  The only subtlety in checking continuity of $\Theta$ at $(f,g)$ is that for $f>-L$ we require that $\e{-\gamma t}f(t,x)\geq -\e{-\gamma t}L(t,x)+\delta$. However, this is not a problem because any admissible $(\eta,\rho)$ decays faster than $\e{\gamma t}$ and provides a bound showing that $\Xi(f,g)$ is finite.

		The Legendre transform of $\Theta$ is
		\begin{align}
			\Theta^*(-\eta,-\rho)=&\ \sup_{f,g}\Big\{ -\int_{\overline{O}}\int_{\R^+} f(t,x) \eta(dt,dx) -\int_{\overline{O}}\int_{\R^+} g(t,x) \rho(dt,dx)-\Theta(f,g)\Big\}\nn\\
			=&\ \begin{cases}
			\int_{\overline{O}}\int_{\R^+} L(t,x) \eta(dt,dx), & (\rho,\eta)\geq 0,\\
			\infty, & {\rm otherwise},
			\end{cases} \nn
		\end{align}
		whereas the Legendre transform $\Xi^*(\eta,\rho)$ is $0$ if (\ref{eqn:Eulerian_target}) and (\ref{eqn:Skorokhod_evolution}) hold and otherwise $+\infty$:
		\begin{align}
			 \Xi^*(\eta,\rho)& = \ \sup_{\psi,J} \Big\{\int_{\overline{O}} \int_{\R^+}\Big[-\frac{\partial}{\partial t}J(t,x)-\frac{1}{2}\Delta J(t,x)\Big]\eta(dt,dx)\nn \\ &+\int_{\overline{O}}\int_{\R^+} \big[J(t,x)-\psi(x)\big]\rho(dt,dx)
			+\int_{{O}}\psi(z)\nu(z)dz-\int_OJ(0,y)\mu(y)dy \Big\}.\nn
		\end{align}
		Thus ${\mathcal P}_1(\mu,\nu)=-\max_{\eta,\rho}\big[ - \Theta^*(-\eta,-\rho)-\Xi^*(\eta,\rho)\big]$ and the proof is complete.
	\end{proof}

	We now prove the identity ${\mathcal D}_1(\mu,\nu)={\mathcal D}_0(\mu,\nu)$ by using well known relationships between the Snell envelope and viscosity solutions \cite{el1997reflected}.  
	\begin{proposition}\label{prop:dual_equivalence}
		The dual problems are equivalent, i.e.,  ${\mathcal D}_1(\mu,\nu)={\mathcal D}_0(\mu,\nu)$.
	\end{proposition}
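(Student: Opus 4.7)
The plan is to establish both inequalities $\mathcal{D}_1 \le \mathcal{D}_0$ and $\mathcal{D}_0 \le \mathcal{D}_1$. The first direction is a direct Itô-calculus computation that converts a smooth supersolution pair $(\psi, J)$ into a supermartingale dominating the target process. The reverse direction uses the identification of the Snell envelope of $\psi(B_t)-\int_0^t L(s,B_s)\,ds$ with the value function $J_\psi$ of \eqref{eqn:dynamic_programming}, followed by a smoothing step to produce an admissible $J \in C^{1,2}_{-\gamma}(\R^+\times \overline{O})$.

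For $\mathcal{D}_1(\mu,\nu) \le \mathcal{D}_0(\mu,\nu)$, I would start with any feasible pair $(\psi, J)$ for $\mathcal{D}_1$, so $\psi \in C(\overline{O})$, $J \in C^{1,2}_{-\gamma}(\R^+\times \overline{O})$, and $V_\psi[J] \ge 0$. Setting
$$
G_t := J(t,B_t) - \int_0^t L(s,B_s)\,ds,
$$
Itô's formula gives
$$
dG_t = \bigl[\partial_t J(t,B_t) + \tfrac{1}{2}\Delta J(t,B_t) - L(t,B_s)\bigr]\,dt + \nabla J(t,B_t)\cdot dB_t.
$$
The drift is non-positive because $V_\psi[J]\ge 0$ forces $-\partial_t J - \tfrac{1}{2}\Delta J + L \ge 0$. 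The growth bound on $J$ ensures $G \in \mathcal{K}^+_{-\gamma}$ after checking the local-martingale part is a true martingale (using the uniform boundedness of $L$ from \ref{itm:bounded} and exponential decay). The constraint $J - \psi \ge 0$ rewrites as $G_t - \psi(B_t) \ge -\int_0^t L(s,B_s)\,ds$, and $\mathbb{E}[G_0] = \int_{\overline{O}} J(0,y)\,\mu(dy)$, yielding the same objective value. Taking the sup gives $\mathcal{D}_1\le \mathcal{D}_0$.

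For $\mathcal{D}_0(\mu,\nu) \le \mathcal{D}_1(\mu,\nu)$, I would take a near-optimal pair $(\psi, G)$ for $\mathcal{D}_0$ and define $J_\psi(t,x)$ by \eqref{eqn:dynamic_programming}. The classical Snell envelope theory from \cite{el1997reflected} identifies $S_t := J_\psi(t,B_t) - \int_0^t L(s,B_s)\,ds$ as the smallest càdlàg supermartingale dominating $Y_t := \psi(B_t) - \int_0^t L(s,B_s)\,ds$. Since $G$ is a supermartingale with $G_t \ge Y_t$, the comparison $G_t \ge S_t$ gives $\mathbb{E}[G_0] \ge \int J_\psi(0,y)\,\mu(dy)$. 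Moreover the dynamic programming principle shows $V_\psi[J_\psi] = 0$ in the viscosity sense, so $J_\psi$ is formally admissible for $\mathcal{D}_1$ — except for the regularity gap $J_\psi \notin C^{1,2}_{-\gamma}$ in general.

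The main obstacle is closing this regularity gap. The plan is to approximate $\psi$ by mollified $\psi^\epsilon \le \psi$ and $L$ by $L^\epsilon \ge L$ (with slight inflation, e.g.\ adding an $\epsilon$ coercive term), solve the corresponding penalized HJB equation
$$
-\partial_t J^\epsilon - \tfrac{1}{2}\Delta J^\epsilon + L^\epsilon - \tfrac{1}{\epsilon}\bigl(\psi^\epsilon - J^\epsilon\bigr)_+ = 0,
$$
whose solutions enjoy parabolic $C^{1,2}$ regularity by classical theory. By the penalization argument $J^\epsilon \ge \psi^\epsilon$ and $V_{\psi^\epsilon}[J^\epsilon] \ge 0$, so the pair $(\psi^\epsilon, J^\epsilon)$ is feasible for a slightly perturbed $\mathcal{D}_1$. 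The stability of viscosity solutions combined with the $L^2$ densities of $\mu,\nu$ from \ref{itm:L2_initial} yields $\int \psi^\epsilon\,d\nu - \int J^\epsilon(0,y)\,\mu(dy) \to \int \psi\,d\nu - \int J_\psi(0,y)\,\mu(dy)$, with the latter bounded below by $\int \psi\,d\nu - \mathbb{E}[G_0]$. Passing $\epsilon \to 0$ produces a feasible sequence for $\mathcal{D}_1$ whose objective approaches that of $(\psi,G)$, giving $\mathcal{D}_1\ge \mathcal{D}_0$ and completing the proof.
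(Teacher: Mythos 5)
Your first direction matches the paper's argument exactly: define $G^J_t = J(t,B_t) - \int_0^t L(s,B_s)\,ds$, verify the supermartingale property via It\^{o}'s formula and the inequality $-\partial_t J - \tfrac12\Delta J + L \ge 0$, and read off the objective. For the second direction, your Snell-envelope observation $G_t \ge S_t$ (with $S_t := J_\psi(t,B_t) - \int_0^t L\,ds$ the smallest supermartingale dominating $Y_t$) to get $\mathbb{E}[G_0] \ge \int J_\psi(0,y)\,\mu(dy)$ is a valid alternative to the paper's route, which instead picks a near-optimal randomized stopping time $\tau$ for $J_\psi(0,\cdot)$ and estimates $\int J_\psi(0,y)\,\mu(dy) = \mathbb{E}[\psi(B_\tau) - \int_0^\tau L\,dt] \le \mathbb{E}[G_\tau] \le \mathbb{E}[G_0]$; the two are essentially equivalent and both ultimately rest on the supermartingale/optional-stopping property of $G$.

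The regularization step, as you have written it, has genuine errors. First, you take $L^\epsilon \ge L$, but feasibility for $\mathcal{D}_1$ requires $V_\psi[J^\epsilon] \ge 0$ with the \emph{original} $L$: from $-\partial_t J^\epsilon - \tfrac12\Delta J^\epsilon + L^\epsilon \ge 0$ and $L \le L^\epsilon$ you cannot conclude $-\partial_t J^\epsilon - \tfrac12\Delta J^\epsilon + L \ge 0$; you would need $L^\epsilon \le L$. Second, the claim that ``by the penalization argument $J^\epsilon \ge \psi^\epsilon$'' is false at fixed $\epsilon > 0$: the penalized solution only satisfies $J^\epsilon \ge \psi^\epsilon - O(\epsilon)$, so the pair $(\psi^\epsilon, J^\epsilon)$ is not feasible for $\mathcal{D}_1$ as stated (one must shift the obstacle down by $O(\epsilon)$ and absorb the shift into the objective). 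Third, the penalty $(\cdot)_+$ is merely Lipschitz, so classical parabolic theory does not give $J^\epsilon \in C^{1,2}_{-\gamma}$; the kink must be smoothed. All three are repairable with care, but the paper sidesteps them entirely by invoking Perron's method (Lemma \ref{prop:Perron}), which characterizes $J_\psi$ as the infimum over smooth supersolutions $\phi \in C^{1,2}_{-\gamma}(\R^+\times\overline{O})$ with $V_\psi[\phi] \ge 0$, thereby providing the needed smooth approximant directly.
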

	\begin{proof} 
		One inequality is easy.	Given $\psi\in C(\overline{O})$ and $J\in C_{-\gamma}^{1,2}(\R^+\times \overline{O})$ satisfying $V_\psi[J]\geq 0$, we may consider the supermartingale $G^J\in \mathcal{K}_{-\gamma}^+$ defined by 
		\begin{align}
			G_t^J := J(t,B_t)-\int_0^t L(s,B_s)ds.\nn
		\end{align}
		That $G^J$ is supermartingale follows easily from $\frac{\partial}{\partial t}J(t,x)+\frac{1}{2}\Delta J(t,x)\leq L(t,x)$ and Ito's formula. Indeed, for $t\leq \tau\leq \tau_O$,
		\begin{align}
			\E\big[G_\tau^J| \mathcal{F}_t\big] =&\ \E\Big[\int_t^\tau\Big(\frac{\partial}{\partial t}J(t,B_t)+\frac{1}{2}\Delta J(t,B_t)-L(t,B_t)\Big)dt+\nabla J(t,B_t)\cdot dB_t| \mathcal{F}_t\Big]+G_t^J\nn\\
			\leq&\ G_t^J.\nn
		\end{align}
		With the above definition of $G^J$ we have that $G^J_t-\psi(B_t)\geq -\int_0^tL(s,B_s)ds$ and
		$
			\E[G_0^J]=\int_{\overline{O}}J(0,y)\mu(dy),
		$ 
		which clearly shows that ${\mathcal D}_1(\mu,\nu)\le {\mathcal D}_0(\mu,\nu)$. 

		For the reverse inequality (which also follows from Proposition \ref{thm:stochastic_embedding}), 
		we will show that  given $(\psi,G)\in C(\overline{O})\times \mathcal{K}_{-\gamma}^+$, then for any $\epsilon>0$, there is $J\in C_{-\gamma}^{1,2}(\R^+\times \overline{O})$ such that $V_\psi[J]\geq 0$ and
		$$
			-\int_{\overline{O}}J(0,y)\mu(dy)\geq -\mathbb{E}\big[G_0\big]-\epsilon.
		$$
	  	Since $J$ is admissible for the same $\psi$, then by taking the limit as $\epsilon\rightarrow 0$, we get that ${\mathcal D}_1(\mu,\nu)\geq {\mathcal D}_0(\mu,\mu)$. 

		We shall construct $J$ as  an approximation of the viscosity solution $J_\psi$ to $V_\psi[J_\psi]=0$, which may be defined for each $(t,x)$ as
		$$
			J_\psi(t,x) := \sup_{ \tau \in \mathcal{R}^{t,x}}\Big\{\mathbb{E}^{t,x}\Big[\psi(B_\tau)-\int_t^\tau L(s,B_s)ds\Big]\Big\}.
		$$
		 In particular, by compactness of the randomized stopping times and continuity of $\psi$, $L$ and the paths of Brownian motion, there is a randomized stopping time $\tau^{y}$ that attains this value at $(0,y)$ so that
		\begin{align}
			\int_{\overline{O}}J_\psi(0,y)\mu(dy) =&\ \int_{\overline{O}}\mathbb{E}^{0,y}\Big[\psi(B_{\tau^y})-\int_0^{\tau^y} L(t,B_t)dt\Big]\mu(dy)\nn\\
			=&\ \mathbb{E}\Big[\psi(B_{\tau})-\int_0^{\tau} L(t,B_t)dt\Big]\nn\\
			\leq&\ \mathbb{E}\big[G_{\tau}\big] \leq \mathbb{E}\big[G_0\big],\nn
		\end{align}
		where $\tau\in \mathcal{R}$ is defined as $\tau^{B_0}$ and the last inequality is implied by the supermartingale property of $G$. 
		Perron's method (see Lemma \ref{prop:Perron}) implies that $J_\psi$ may be approximated by a smooth supersolution, completing the proof.
	\end{proof}

\section{Attainment in Sobolev Class and Verification}\label{sec:attainment_verification}


\subsection{ Primal problem and regularity of optimal solutions}
	We define a couple more function spaces that will be useful in our analysis. We let $L^2_\gamma(\R^+;H_0^1(O))$ denote the weighted Sobolev space with norm
	\begin{align}\label{eqn:l2gamma}
		\|\phi\|_{L^2_\gamma(\R^+;H_0^1(O))}^2=\int_{\R^+}\e{2\gamma t}\int_{O}|\nabla \phi(t,x)|^2 dxdt,
	\end{align}
	which we will see is a natural space for the density of $\eta$. We also define a time dependent Sobolev space, $\mathcal{X}$, that is natural for the attainment of the value function $J$ for the dual problem $\mathcal{D}_1(\mu,\nu)$.  These are the functions $f$ that have $f\in L^2_{-\gamma}(\R^+;H_0^1(O))$ and a weak time derivative $\frac{\partial}{\partial t} f\in L^2_{-\gamma}(\R^+;H^{-1}(O))$, and the norm of $\mathcal{X}$ is
	\begin{align}\label{eqn:X}
		\|f\|_{\mathcal{X}}^2 = \int_{\R^+}\e{-2\gamma t}\Big[ \|f(t,\cdot)\|_{H_0^1(O)}^2+\|\frac{\partial}{\partial t}f(t,\cdot)\|_{H^{-1}(O)}^2\Big]dt.
	\end{align}
	Given $\psi\in H_0^1(O)$ lower semicontinuous, the unique `minimal weak solution' of $V_\psi[J_{\psi}]=0$ in $\mathcal{X}$ is shown to exist in \cite{bensoussan2011applications}; see Definition \ref{def:variational_inequality}.

		We note that the attainment of ${\mathcal P}_0(\mu,\nu)$ and ${\mathcal P}_1(\mu,\nu)$ in the weak class of $\mathcal{R}$ and $\mathcal{M}_{\gamma}(\R^+\times \overline{O})^2$ follows from the proof of Theorems \ref{thm:probabilistic_weak_duality} and \ref{thm:Eulerian_weak_duality}. We shall now prove that the evolution equation (\ref{eqn:Skorokhod_evolution}) implies additional regularity properties for $\eta$. We abuse notation to use $\eta$ to refer to the density with respect to the Lebesgue measure.

 	\begin{lemma}\label{lem:regularity}
 		Suppose {\normalfont \ref{itm:convex_support}} and {\normalfont \ref{itm:L2_initial}}, and that $(\eta,\rho)$ is admissible.  We let $\tilde{\mu}(t,x)$ be the solution to the heat equation, $\partial_t\tilde{\mu}=\frac{1}{2}\Delta \tilde{\mu}$, with the  Dirichlet boundary condition on $O$ and $\tilde{\mu}(0,\cdot)=\mu$.  Then $\eta\leq \tilde{\mu}$  a.e., moreover, there is a uniform bound on the norm of $\eta$ in the Sobolev space $L^2_\gamma(\R^+;H^1_0(O))$,
 		\begin{align}\label{eqn:uniform_bound}
 			\|\eta\|_{L^2_\gamma(\R^+;H^1_0(O))}^2\leq C\|\mu\|_{L^2(O)}^2,
 		\end{align}
 		and $\rho$ lies in the dual space of $\mathcal{X}$.
 	\end{lemma}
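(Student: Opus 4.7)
My plan is to interpret the admissibility relation \eqref{eqn:Skorokhod_evolution} as the distributional parabolic equation
$$\partial_t \eta - \frac{1}{2}\Delta\eta = -\rho,\qquad \eta(0,\cdot) = \mu,$$
on $\R^+\times O$ with an effective Dirichlet boundary condition. The three assertions of the lemma then correspond, respectively, to (a) a comparison with the Dirichlet heat flow, (b) a weighted parabolic energy estimate, and (c) a direct duality estimate extracted from \eqref{eqn:Skorokhod_evolution} itself.

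For (a), I would argue by test-function duality. Given $\phi \in C^\infty_c((0,\infty)\times O)$ with $\phi\ge 0$, let $u$ solve the backward Dirichlet heat problem $\partial_t u + \frac{1}{2}\Delta u = -\phi$ on $[0,T]\times O$, with $u=0$ on $\partial O$ and $u(T,\cdot)\equiv 0$ for some large $T$. The maximum principle yields $u\ge 0$, and standard parabolic regularity places $u$ in $C^{1,2}_{-\gamma}(\R^+\times\overline O)$, making it admissible in \eqref{eqn:Skorokhod_evolution}. The Dirichlet heat kernel $p^D$ identifies $\int_O u(0,y)\mu(dy) = \int\int \phi\,\tilde\mu\,dx\,dt$, so \eqref{eqn:Skorokhod_evolution} rearranges to
$$\int\phi\,d\eta \;=\; \int\int \phi\,\tilde\mu\,dx\,dt \;-\; \int u\,d\rho \;\le\; \int\int \phi\,\tilde\mu\,dx\,dt,$$
since $u,\rho\ge 0$. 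As $\phi\ge 0$ is arbitrary, $\eta$ is absolutely continuous with density bounded a.e.\ by $\tilde\mu$; in particular $\eta$ carries no mass on $\partial O$ or at $t=0$ beyond that of $\mu$.

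For (b), I would test the equation against $e^{2\gamma t}\eta$ after mollifying $\eta,\rho,\mu$ so the integration by parts is rigorous. Using $\rho\eta\ge 0$ and the Dirichlet data established in (a), the resulting identity reduces to
$$\frac{1}{2}\int_0^T e^{2\gamma t}\|\nabla\eta\|_{L^2(O)}^2\,dt \;\le\; \frac{1}{2} \|\mu\|_{L^2(O)}^2 + \gamma\int_0^T e^{2\gamma t}\|\eta\|_{L^2(O)}^2\,dt,$$
and the Poincar\'e inequality $\lambda\|u\|_{L^2}^2 \le \frac{1}{2}\|\nabla u\|_{L^2}^2$ on $H^1_0(O)$ absorbs the last term (with ratio $\gamma/\lambda<1$), yielding \eqref{eqn:uniform_bound} with constant $1/(1-\gamma/\lambda)$, uniformly as $T\to\infty$.

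For (c), I rewrite \eqref{eqn:Skorokhod_evolution} in integration-by-parts form,
$$\int w\,d\rho \;=\; \int\int \eta\,\partial_t w\, dx\,dt - \frac{1}{2} \int\int \nabla\eta\cdot\nabla w\,dx\,dt + \int_O w(0,y)\mu(dy),$$
for smooth $w$ with Dirichlet data, and estimate each term by $\|w\|_{\mathcal{X}}$: the first via $H^1_0$/$H^{-1}$ duality in space and weighted $L^2$ duality in time, the second by Cauchy--Schwarz pairing $L^2_\gamma(H^1_0)$ against $L^2_{-\gamma}(H^1_0)$, and the third through the parabolic trace embedding $\mathcal{X}\hookrightarrow C(\R^+;L^2(O))$. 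Combining with (b) and $\mu\in L^2(O)$ gives $|\langle\rho,w\rangle|\le C\|w\|_{\mathcal{X}}$, and density of smooth functions in $\mathcal{X}$ extends $\rho$ to a bounded linear functional.

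The main obstacle is the energy identity in Step (b), since $\eta$ is a priori only a Radon measure. The standard fix -- which I would need to spell out -- is a double mollification in time and space chosen to preserve the initial datum $\mu$ and to exploit the fact (from (a)) that $\eta$ is supported in the interior of $\overline O$, so that no spurious boundary contribution appears; passage to the limit is then controlled by the already-established pointwise bound $\eta\le\tilde\mu$.
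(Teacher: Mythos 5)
Your proof is correct and follows the same overall architecture as the paper's: establish the Dirichlet boundary data and the pointwise bound $\eta \le \tilde\mu$, then run a weighted parabolic energy estimate after mollification, then use the equation directly to realize $\rho$ as a bounded functional on $\mathcal{X}$.

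The two places where you deviate are genuine technical variants, both valid. For $\eta \le \tilde\mu$, the paper mollifies and applies the parabolic maximum principle to the difference $u=\eta_\epsilon-\tilde\mu_\epsilon$ (noting $u(0,\cdot)\le 0$, $u|_{\partial O}=0$, $\partial_t u\le \tfrac12\Delta u$), whereas you test the admissibility identity \eqref{eqn:Skorokhod_evolution} against a nonnegative solution of the backward Dirichlet heat problem and use the adjoint identity $\int u(0,\cdot)\,d\mu = \iint \phi\,\tilde\mu$; both reduce to the maximum principle, but your route avoids comparing two mollified objects and works directly with the measure $\eta$. For the energy estimate, the paper controls $\gamma\int e^{2\gamma t}\|\eta_\epsilon(t)\|_{L^2}^2\,dt$ by substituting the pointwise bound $\eta_\epsilon\le\tilde\mu_\epsilon$ together with $\|\tilde\mu_\epsilon(t)\|_{L^2}\le e^{-\lambda t}\|\mu_\epsilon\|_{L^2}$ and integrating, while you absorb the same term into the gradient integral via the Poincar\'e inequality $\lambda\|\eta\|_{L^2}^2\le\tfrac12\|\nabla\eta\|_{L^2}^2$, which is cleaner and isolates why the condition $\gamma<\lambda$ is exactly what is needed (it makes the absorption factor $1-\gamma/\lambda$ positive). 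Your observation that the resulting inequality must first be established on $[0,T]$ for the smooth mollified quantities and then passed to the limit is the same care the paper takes implicitly; the step needing most attention is, as you say, the mollification preserving the Dirichlet data (which follows from \ref{itm:convex_support}) and the initial datum, and this matches the paper. Step (c) coincides with the paper's argument, just spelled out term-by-term.
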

 	
 	\begin{proof}
 	The proof is standard but we give it here for completeness. We approximate $(\eta,\rho)$ by smooth functions $(\eta_\epsilon,\rho_\epsilon)$ by a parabolic mollification, e.g.\
 		$$
 			\eta_\epsilon(t,x)= \int_{|y-x|<\epsilon}\int_t^{t+\epsilon^2} n_\epsilon(s-t,y-x)\eta(ds,dy)
 		$$
 		where 
 		$$
 			n_\epsilon(t,x):= \frac{1}{\epsilon^{d+2}}n\left(\frac{t}{\epsilon^2},\frac{x}{\epsilon}\right)
 		$$
 		for $n$ a smooth nonnegative function supported in $(0,1)\times B(1,0)$ that integrates to 1.

 		From the equation \eqref{eqn:Skorokhod_evolution} we have that 
		\begin{align}\label{eqn:smooth_evolution}
			\rho_\epsilon(t,x)+\frac{\partial}{\partial t} \eta_\epsilon (t,x)-\frac{1}{2}\Delta \eta_\epsilon(t,x)=0
		\end{align}
		with $\eta_\epsilon,\rho_\epsilon\geq 0$, and $\mu_\epsilon=\eta_\epsilon(0,\cdot)+\rho(\{0\},\cdot)\rightarrow \mu$ in $L^2(O)$ as $\epsilon \to 0$. Moreover, notice that the equation \eqref{eqn:Skorokhod_evolution} and the condition \ref{itm:convex_support} implies that $\eta_\epsilon$ satisfies the Dirichlet boundary condition for $O$, for sufficiently small $\epsilon$.

		We assume $\epsilon$ is sufficiently small that $\mu_\epsilon$ has support in $O$, and we define $\tilde{\mu}_\epsilon$ to be the solution of the heat equation with Dirichlet boundary conditions on $\partial O$ and $\tilde{\mu}_\epsilon(0,x)=\mu_\epsilon(x)$,

 		We let $u(t,x)=\eta_\epsilon(t,x)-\tilde{\mu}_\epsilon(t,x)$, so that $u(0,x)=0$, $u= 0$ in $\partial O$,  and
		$$
			\frac{\partial}{\partial t} u(t,x)\leq \frac{1}{2}\Delta u(t,x).
		$$
		The parabolic maximum principle implies that $u\leq 0$ and hence $\eta_\epsilon \leq \tilde{\mu}_\epsilon$, and by taking $\epsilon \to  0$, we get $\eta \le \tilde{\mu}$ a.e.

 		We use $\e{2\gamma t}\eta_\epsilon$ as a test function for (\ref{eqn:smooth_evolution}) to obtain
		\begin{align}
			0=&\ \int_{\R^+}\e{2\gamma t}\int_O\Big[ \rho_\epsilon(t,x)\eta_\epsilon(t,x)+\frac{1}{2}|\nabla \eta_\epsilon(t,x)|^2\Big]dxdt\nn\\
			&\ +\int_{\R^+}\int_O\Big[\frac{1}{2}\frac{\partial}{\partial t} \e{2\gamma t}\eta_\epsilon(t,x)^2-\gamma \e{2\gamma t}\eta_\epsilon(t,x)^2\Big]dxdt\nn\\
			=&\ \int_{\R^+}\e{2\gamma t}\int_O\Big[\rho_\epsilon(t,x)\eta_\epsilon(t,x)+\frac{1}{2}|\nabla \eta_\epsilon(t,x)|^2\Big]dxdt \nn\\
			&\ -\frac{1}{2}\|\eta_\epsilon(0,\cdot)\|_{L^2(O)}^2-\int_{\R^+} \gamma \e{2\gamma t}\|\eta_\epsilon(t,\cdot)\|_{L^2(O)}^2dt.\nn
		\end{align}
		The first term (with $\rho_\epsilon \eta_\epsilon$) is nonnegative and can be dropped.  The last two terms are bounded by comparison with the solution $\tilde{\mu}_\epsilon$.  For $\lambda$ the smallest eigenvalue of $-\frac{1}{2}\Delta$ with Dirichlet boundary conditions, 
		$$
			\|\tilde{\mu}_\epsilon(t,\cdot)\|_{L^2(O)}^2\leq \e{-2\lambda t}\|\mu_\epsilon\|_{L^2(O)}^2.
		$$
		Since we have that $\|\mu_\epsilon\|_{L^2(O)}\rightarrow \|\mu\|_{L^2(O)}$ as $\epsilon\rightarrow 0$, the estimate (\ref{eqn:uniform_bound}) follows for $\gamma<\lambda$.  With the additional regularity of $\eta$, and the embedding of $\mathcal{X}\subset C_{-\gamma}(\R^+;L^2(O))$, we have that $\rho$ is a continuous linear functional of $\mathcal{X}$.
 	\end{proof}

We can now deduce the following. 
	\begin{proposition}\label{prop:Skorokhod_existence}
		We suppose {\normalfont \ref{itm:convex_support}}, {\normalfont \ref{itm:L2_initial}} and {\normalfont\ref{itm:continuous}}.  Then, ${\mathcal P}_1(\mu,\nu)$ is finite 
		 if and only if {\normalfont \ref{itm:subharmonic_order}} holds. Moreover, there exists an optimal nonnegative admissible pair $(\eta,\rho)\in L^2_\gamma(\R^+;H_0^1(O))\times \left(\mathcal{X}^*\cap\mathcal{M}_\gamma (\R^+\times \overline{O})\right)$ for ${\mathcal P}_1(\mu,\nu)$. 
	\end{proposition}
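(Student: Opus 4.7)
The plan is to handle the claim in three stages: (a) the implication $\mathcal{P}_1(\mu,\nu)<\infty \Rightarrow \mu\prec\nu$, (b) the reverse implication $\mu\prec\nu \Rightarrow \mathcal{P}_1(\mu,\nu)<\infty$, and (c) the existence of an optimizer with the stated Sobolev regularity. Step (c) will follow by reading off the optimizer produced inside the Fenchel-Rockafellar argument of Theorem \ref{thm:Eulerian_weak_duality} and then invoking Lemma \ref{lem:regularity} to upgrade its regularity.

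For step (a), I would pick any admissible pair $(\eta,\rho)$ with finite cost and test the very-weak evolution identity \eqref{eqn:Skorokhod_evolution} against time-independent smooth functions $w(t,x)=h(x)$, where $h\in C^2(\overline{O})$ is subharmonic. Such $w$ lies in $C^{1,2}_{-\gamma}(\R^+\times\overline{O})$ because $h$ is bounded on $\overline{O}$ and hence $\e{-\gamma t}h(x)\to 0$ uniformly. Substituting into \eqref{eqn:Skorokhod_evolution} and using the target marginal identity \eqref{eqn:Eulerian_target} yields
$$
-\int_{\overline{O}} h(y)\,\mu(dy) = \int_{\R^+}\!\int_{\overline{O}}\tfrac12 \Delta h(x)\,\eta(dt,dx) - \int_{\overline{O}} h(x)\,\nu(dx).
$$
Since $\eta\ge 0$ and $\Delta h\ge 0$, the first term on the right is nonnegative, giving $\int h\,d\mu\le \int h\,d\nu$, which is exactly \ref{itm:subharmonic_order}.

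For step (b), I would invoke one of the classical Skorokhod embedding results on the bounded convex domain $O$ mentioned in the introduction to produce some $\tau\in\mathcal{T}_O(\mu,\nu)$. Proposition \ref{thm:stochastic_embedding} then converts $\tau$ into an admissible pair $(\eta,\rho)$ whose cost equals $\E\big[\int_0^\tau L(t,B_t)\,dt\big]$. Because $\tau\le \tau_O$ and $\gamma<\lambda$, a standard exit-time estimate for the bounded domain $O$ gives $\E[\e{\gamma\tau_O}]<\infty$, and together with the growth bound $|L(t,x)|\le C\e{\gamma t}$ coming from \ref{itm:continuous}, this shows the cost is finite, so $\mathcal{P}_1(\mu,\nu)<\infty$.

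For step (c), once $\mathcal{P}_1(\mu,\nu)<\infty$, the Fenchel-Rockafellar argument inside the proof of Theorem \ref{thm:Eulerian_weak_duality} directly produces a nonnegative admissible pair $(\eta,\rho)\in\mathcal{M}_\gamma(\R^+\times\overline{O})^2$ achieving the infimum. Applying Lemma \ref{lem:regularity} to this optimizer automatically upgrades it to $\eta\in L^2_\gamma(\R^+;H_0^1(O))$ and $\rho\in\mathcal{X}^*$, completing the proof. The most delicate point I expect is step (a): one has to be careful that the smooth subharmonic test functions used in the definition \eqref{eqn:subharmonic_order} genuinely fit inside the class $C^{1,2}_{-\gamma}(\R^+\times\overline{O})$ required by the very-weak formulation, which for unbounded or boundary-singular $h$ needs a mild approximation argument; the remaining steps are essentially bookkeeping once Theorem \ref{thm:Eulerian_weak_duality} and Lemma \ref{lem:regularity} are in hand.
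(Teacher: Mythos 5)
Your proof is correct and reaches the same conclusion, but step (a) follows a genuinely different route from the paper. The paper proves the contrapositive on the dual side: if \ref{itm:subharmonic_order} fails, it selects $h \in C^2(\overline{O})$ superharmonic with $\int h\,d\nu > \int h\,d\mu$, sets $\psi = J = \alpha h$ (admissible for $V_\psi[J]\ge 0$ since $L\ge 0$), and lets $\alpha\to+\infty$ to force $\mathcal{D}_1(\mu,\nu)=+\infty$, then invokes Theorem \ref{thm:Eulerian_weak_duality} to transfer this to $\mathcal{P}_1$. Your approach tests the very-weak evolution identity \eqref{eqn:Skorokhod_evolution} directly on the primal side with a time-independent subharmonic $h$, obtaining
$$
\int_O h\,d\nu - \int_O h\,d\mu = \tfrac{1}{2}\int_{\R^+}\!\int_{\overline{O}}\Delta h\,\eta(dt,dx)\ge 0
$$
at once. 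This is more elementary, bypasses duality entirely for that implication, and makes clear that the conclusion does not depend on the Lagrangian $L$ at all. The approximation subtlety you flag — reducing the test class in \eqref{eqn:subharmonic_order} from all smooth subharmonic functions on $O$ to $h\in C^2(\overline{O})$ — is equally present (and equally unaddressed) in the paper, which likewise only exhibits a $C^2(\overline{O})$ test function. Your steps (b) and (c) coincide with the paper's: it also invokes the classical existence of $\tau\in\mathcal{T}(\mu,\nu)$ under \ref{itm:subharmonic_order}, Proposition \ref{thm:stochastic_embedding} to manufacture an admissible pair, and Lemma \ref{lem:regularity} for the Sobolev upgrade; the paper skips your exit-time estimate by instead appealing to its earlier observation that, since $L\in C_{-\gamma}$ and $\eta\in\mathcal{M}_\gamma$, every admissible pair automatically has finite cost.
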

	\begin{proof}
	First, due to weak* compactness of the space $\mathcal{M}_\gamma(\R^+\times\overline{O})^2$ as well as the duality  	 
	Theorem \ref{thm:Eulerian_weak_duality},
	there exists optimal  admissible $(\eta,\rho)\in \mathcal{M}_\gamma(\R^+\times\overline{O})^2$ with finite cost if and only if $\mathcal{P}_1(\mu, \nu) = {\mathcal D}_1(\mu,\nu)<\infty$.  If \ref{itm:subharmonic_order} fails, then there is $h\in C^2(\overline{O})$ with $-\frac{1}{2}\Delta h\geq 0$ and
		$$
			\int_{\overline{O}} h(z) \nu(dz)>\int_{\overline{O}} h(y) \mu(dy).
		$$
		We let $\psi=J=\alpha h$ and $J$ satisfies $V_\psi[J]\geq 0$ and
		$$
			\int_{\overline{O}} \psi(z) \nu(dz)-\int_{\overline{O}} J(0,y)\mu(dy)\rightarrow +\infty
		$$
		as $\alpha\rightarrow +\infty$, making $\mathcal{D}_1(\mu, \nu) =+\infty$.

		Given \ref{itm:subharmonic_order} 
		 we know that $\mathcal{T}(\mu,\nu)\ne \emptyset$, and Proposition \ref{thm:stochastic_embedding} implies there exists an admissible pair $(\eta,\rho)$ thus ${\mathcal D}_1(\mu,\nu)$ is finite.  The proof of the proposition is concluded by applying Lemma \ref{lem:regularity} to show that if $(\eta,\rho)$ exists then $(\eta,\rho)\in L^2_\gamma(\R^+;H_0^1(O))\times \left(\mathcal{X}^*\cap\mathcal{M}_\gamma (\R^+\times \overline{O})\right)$.
	\end{proof}
\subsection{ Dual attainment in Sobolev class}
	We now address the attainment of the dual problem ${\mathcal D}_1(\mu, \nu)$.
		The upcoming theorem stating that the dual problem ${\mathcal D}_1(\mu, \nu)$ is attained at $(\psi,J)\in H_0^1(O)\times \mathcal{X}$ is one of our most important results.  We note that $V_\psi[J]\geq 0$ and $V_\psi[J]=0$ both make sense as a weak variational inequality in these spaces; see Definition \ref{def:variational_inequality}. Furthermore, due to the embedding of $\mathcal{X}\subset C_{-\gamma}(\R^+;L^2(O))$ 
	 the dual value of $\psi$ and $J$,  that is,
	\begin{align}\label{eqn:dual_value}
		\int_{\overline{O}} \psi(z)\nu(dz)-\int_{\overline{O}} J(0,y)\mu(dy),
	\end{align}
	is well defined given \ref{itm:L2_initial}. We note that there are three ways that $V_\psi[J]\geq 0$ inadequately bounds $\psi$ and $J$.  
	\begin{itemize}
		\item Subtracting a positive function from $\psi$ clearly decreases the value (\ref{eqn:dual_value}). This is addressed easily, for example by supposing $\psi(x)=\inf\{t;\ J(t,x)\}$, although we follow a slightly different approach.
		\item Adding a non-negative supersolution, $-\partial_t w - \frac{1}{2}\Delta w\geq 0$ to $J$, also decreases the value (\ref{eqn:dual_value}), and is addressed easily by supposing $V_\psi[J]=0$.
		\item The most sensitive degree of freedom comes from adding 
		a superharmonic function to both $\psi$ and $J$, which also decreases the value (\ref{eqn:dual_value}) given \ref{itm:subharmonic_order}.
	\end{itemize}

	We now prove a result that utilizes this intuition to provide the needed estimates for the proof of dual attainment.  In particular, we restrict to a class of solutions such that the weak notion of $V_\psi[J]=0$ coincides with the viscosity sense.

	\begin{proposition}\label{prop:dual_estimates}
		We suppose {\normalfont\ref{itm:convex_support}},{\normalfont\ref{itm:L2_initial}}, {\normalfont\ref{itm:subharmonic_order}},  {\normalfont   \ref{itm:continuous}} and {\normalfont \ref{itm:bounded}}.  Maximizing {\normalfont (\ref{eqn:dual_value})} over the set  of $\psi$ and $J$ satisfying the following has the same value as $\mathcal{D}_1(\mu,\nu)$ in \eqref{eqn:dual_cost}:
		\begin{enumerate}[label =\bf \roman*)]
			\item\label{i} $\psi\in H_0^1(O)$, $J\in \mathcal{X}$, and $J$ satisfies $V_\psi[J]\geq 0$ weakly;
			\item\label{ii} $\psi(x)\leq J(t,x)\leq 0$\ for almost all $(t,x)\in \R^+\times \overline{O}$;
			\item\label{iii} $-\frac{1}{2}\Delta \psi(x)\geq -D:=\|L\|_\infty$ weakly.
		\end{enumerate}	
	\end{proposition}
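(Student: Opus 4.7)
I establish the equality by proving both inequalities for $F(\psi,J) := \int_{\overline O}\psi\,d\nu - \int_{\overline O} J(0,\cdot)\,d\mu$. The easier direction --- that the supremum of $F$ over pairs satisfying (i)--(iii) is at most $\mathcal{D}_1(\mu,\nu)$ --- is handled by mollifying any Sobolev-class pair $(\psi,J)$ to obtain smooth classical admissible approximants for $\mathcal{D}_1$ (with small constant corrections to absorb the mollification error in $L$), and then passing $F$ to the limit via \ref{itm:L2_initial} together with the embedding $\mathcal{X}\hookrightarrow C_{-\gamma}(\R^+;L^2(O))$ from Lemma~\ref{lem:regularity}.

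For the reverse inequality, I take any admissible pair $(\psi_0,J_0)$ for $\mathcal{D}_1$ and perform three successive modifications corresponding to the three bullet points preceding the proposition. \emph{Step 1 (second bullet):} replace $J_0$ by the value function $J_{\psi_0}$ of \eqref{eqn:dynamic_programming}, restricted to stopping times $\tau\leq \tau_O$; by Lemma~\ref{prop:Perron} this is the minimal classical supersolution of $V_{\psi_0}[J]\geq 0$, so $J_{\psi_0}\leq J_0$, $V_{\psi_0}[J_{\psi_0}]=0$ and $J_{\psi_0}=\psi_0$ on $\partial O$, without decreasing $F$. \emph{Step 2:} shift $(\psi_0,J_{\psi_0})\to(\psi_0-c,J_{\psi_0}-c)$ with $c:=\sup_{\overline O}\psi_0$; since $\mu,\nu$ are probability measures this is invariant for $F$, and now $\psi_0\leq 0$, hence $J_{\psi_0}\leq 0$ via \eqref{eqn:dynamic_programming} using $L\geq 0$. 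Then subtract the harmonic extension of $\psi_0|_{\partial O}$ from both functions to enforce $\psi_0\in H_0^1(O)$; this is $F$-preserving since harmonic functions integrate equally against $\mu$ and $\nu$ under \ref{itm:subharmonic_order}. After these steps, (i) and (ii) are in force.

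\emph{Step 3 (third bullet, main obstacle):} to impose (iii), the natural candidate is to add to both $\psi_0$ and $J_{\psi_0}$ the solution $w\in H_0^1(O)$ of $-\frac{1}{2}\Delta w=(-\frac{1}{2}\Delta\psi_0+D)_-$. This $w\geq 0$ is superharmonic, and the pair $(\psi_0+w,J_{\psi_0}+w)$ preserves the variational inequality --- the gap $J-\psi$ is unchanged and the PDE term gains $-\frac{1}{2}\Delta w\geq 0$ --- while achieving $-\frac{1}{2}\Delta(\psi_0+w)\geq -D$. However, by \ref{itm:subharmonic_order} applied to the superharmonic $w$, the change in $F$ equals $\int w(d\nu-d\mu)\leq 0$, so this correction can lose dual value; this is precisely the third ``free direction'' flagged before the proposition.

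To handle this loss, I plan to work along a maximizing sequence $(\psi_n,J_n)$ for $\mathcal{D}_1$ and argue that the associated corrections $w_n$ can be made negligible in the limit. The uniform bounds $0\leq w_n$, $-\frac{1}{2}\Delta w_n\leq 2D$, and $w_n|_{\partial O}=0$ yield a uniform $L^\infty$-estimate and weak $H_0^1(O)$-compactness. The key point to establish is that any persistent weak limit $w_\infty\neq 0$ would contradict the maximality of the sequence: I expect this to follow from a Hahn--Banach style separation identifying the extremal superharmonic perturbation as the optimal dual variation, together with the strong duality $\mathcal{D}_1=\mathcal{P}_1$ (Theorem~\ref{thm:Eulerian_weak_duality}) and a complementary-slackness pairing between $w_n$ and the primal density $\eta$ coming from the Fenchel--Rockafellar structure of Theorem~\ref{thm:Eulerian_weak_duality}. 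Rigorously executing this contradiction argument is the principal technical work.
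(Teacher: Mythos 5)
Your Steps~1--2 are broadly in the spirit of the paper's proof (which also subtracts a harmonic extension to get $\psi\in H_0^1(O)$, and subtracts a superharmonic envelope to get $\psi\le J\le 0$), although your order of operations creates a small problem: after shifting by $c=\sup\psi_0$ to obtain $\psi_0-c\le 0$ and \emph{then} subtracting the harmonic extension $h$ of the boundary data, the resulting function $\psi_0-c-h$ is no longer guaranteed to be $\le 0$, since $-h\ge 0$ while $\psi_0-c\le 0$ and the two need not be comparable. The paper avoids this by first fixing the boundary trace (to land in $H_0^1(O)$) and only afterwards subtracting the superharmonic envelope $\phi$ of $\psi_1$, which already vanishes on $\partial O$, so property \textbf{ii.}\ and the $H_0^1$ membership hold simultaneously.

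The genuine gap is Step~3. You propose to enforce \textbf{iii.}\ by adding a nonnegative superharmonic $w$ with $-\tfrac12\Delta w = (-\tfrac12\Delta\psi+D)_-$ to both $\psi$ and $J$; as you yourself observe, this \emph{strictly loses} dual value whenever $\int w\,(d\nu-d\mu)<0$, and you only sketch a compactness/contradiction scheme to show the loss vanishes along a maximizing sequence, acknowledging that ``rigorously executing this\ldots is the principal technical work.'' This is precisely the step the proposition has to settle, and your outline does not supply a mechanism for the contradiction (in particular a persistent limit $w_\infty\neq 0$ is perfectly consistent with dual optimality unless one couples it to a complementary-slackness identity that you have not produced). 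The paper takes the opposite route: rather than lowering $\psi$ by a superharmonic correction, it \emph{raises} $\psi_2$ to $\psi_3$, the smallest supersolution of the scalar obstacle problem $\min\{\psi_3-\psi_2,\,-\tfrac12\Delta\psi_3+D\}=0$, so that $\int\psi_3\,d\nu\ge\int\psi_2\,d\nu$ by construction. The crucial (and nontrivial) point that makes this work is the inequality $\psi_3\le\tilde J_2(x):=\inf_t J_2(t,x)$, proved by showing $\tilde J_2$ is itself a supersolution of that obstacle problem (using $-\partial_t J_2-\tfrac12\Delta J_2\ge -L\ge -D$ and examining either a finite-time minimizer or a sequence $t_i\to\infty$ with $\partial_t J_2(t_i,x)\to 0$). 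Consequently $J_2$ remains a supersolution of $V_{\psi_3}[J]\ge 0$, $J_3\le J_2$, and both terms in the dual functional move the right way — no loss at all. Your write-up would need either to adopt this obstacle-problem argument or to fully substantiate the vanishing-loss claim, neither of which is currently done.
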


	\begin{proof}
		When  maximizing (\ref{eqn:dual_value}) over $\psi\in H_0^1(O)$ and $J\in \mathcal{X}$ satisfying $V_\psi[J]\geq 0$ weakly, the supremum is bounded above by $\mathcal{P}_1(\mu,\nu)=\mathcal{D}_1(\mu,\nu)$ because $\psi$ and $J$ have sufficient regularity to be used as test functions in (\ref{eqn:Eulerian_target}) and (\ref{eqn:Skorokhod_evolution}), which implies the weak duality inequality. 

		If $\psi\in C(\overline{O})\cap H_0^1(O)$ then the viscosity solution $J_\psi$ is in $C(\R^+\times \overline{O})\cap \mathcal{X}$ and is the unique weak solution to $V_\psi[J]=0$, which is also the smallest supersolution (infimum over supersolutions); see Proposition \ref{prop:viscosity_variational_equivalence}. Due to this, in the following three stage process, weak solutions and viscosity solutions may be used interchangeably.

	{\bf Step 1:}	Given $\psi_0\in C(\overline{O})$ and $J_0\in C_{-\gamma}^{1,2}(\R^+\times \overline{O})$  with $V_{\psi_0} [J_0] \ge 0$, we can first approximate $\psi_0$ from below by $\hat{\psi}\in C^\infty(\overline{O})$ such that $\psi_0-\epsilon \leq \hat{\psi}\leq \psi_0$, and therefore
		$$
			\int_{\overline{O}}\hat{\psi}(z)\nu(dz)\geq \int_{\overline{O}}{\psi_0}(z)\nu(dz)-\epsilon \quad \&  \quad V_{\hat \psi} [J_0]\ge V_{\psi_0} [J_0] \ge 0. 
		$$
		Then we define $h$ to be the solution to the Poisson problem $\Delta h =0$ with $h=\hat{\psi}$ on $\partial O$ and we let $\psi_1=\hat{\psi}-h$.  For $\hat{J}=J_0-h$ we have  $V_{\psi_1}[\hat{J}]\geq 0$. Given \ref{itm:subharmonic_order}, (\ref{eqn:dual_value}) does not change upon replacing $(\hat{\psi},J)$ with $(\psi_1,\hat{J})$.   We let $J_1$ be the unique solution to $V_{\psi_1}[J_1]=0$ with Dirichlet boundary condition, and $J_{1}\in C_{-\gamma}(\R^+\times \overline{O})\cap \mathcal{X}$.  Then $(\psi_1,J_1)$ satisfy item \ref{i}, 
		and as $J_1$ is the smallest supersolution, we immediately have $J_{1}\leq \hat{J}$, hence (\ref{eqn:dual_value}) again does not decrease.

		{\bf Step 2:} We next define $\phi\in C(\overline{O})\cap H_0^1(O)$ as the superharmonic envelope of $\psi_1$, equivalently 
		the viscosity solution given by the infimum over supersolutions of
		$$
			\min\left\{\phi(x)-\psi_1(x),-\frac{1}{2}\Delta\phi(x)\right\}=0.
		$$
	We define $\psi_2:=\psi_1(x)-\phi(x)$ and $J_2(t,x):=J_1(t,x)-\phi(x)$. Clearly, $J_2\geq \psi_2$. Since $\phi$ is a supersolution of $V_{\psi_1}[\phi]\geq 0$ we have  
	$\phi(x)\geq J_1(t,x)$ and thus $J_2\leq 0.$ If $J_2(t_1,x_1)<0$ then $\phi(x_1)>J_1(t_1,x_1)\geq \psi_1(x_1)$ and 
		$$
			-\frac{\partial}{\partial t} J_2(t_1,x_1)-\frac{1}{2}\Delta J_2(t_1,x_1)= -\frac{\partial}{\partial t} J_1(t_1,x_1)-\frac{1}{2}\Delta J_1(t_1,x_1)\geq -L(t_1,x_1).
		$$
		If $J_2(t_1,x_1)=0$, then since $J_2\leq 0$ it follows that
		$$
			-\frac{\partial}{\partial t} J_2(t_1,x_1)-\frac{1}{2}\Delta J_2(t_1,x_1)\geq 0\geq -L(t,x).
		$$
		We have shown that $J_2$ is a supersolution  
		$$V_{\psi_2}[J_2]\geq 0,$$ and $(\psi_2,J_2)$ satisfy item 
		\ref{ii}.  
  		Moreover notice that 
		the difference in dual value is 
		$$
			\int_O [-\phi(z)]\nu(z)dz-\int_O [-\phi(y)]\mu(y)dy\geq 0
		$$
		because $-\phi$ is subharmonic and $\mu\prec\nu$ by \ref{itm:subharmonic_order}.

{\bf Step 3:}
	We now modify the functions $(\psi_2, J_2)$ to satisfy all the properties  \ref{i}, \ref{ii}, and \ref{iii}. 
	We consider $\psi_3\in C(\overline{O})\cap H_0^1(O)$ that solves the variational inequality
		\begin{align}\label{eqn:var-D}
		\min\Big\{\psi_3-\psi_2,-\frac{1}{2}\Delta \psi_3+D\Big\} = 0.
		\end{align}
		Here, we can take $\psi_3$ so that it is the smallest supersolution.
		Clearly, $\psi_3\geq \psi_2$ and $\psi_3$ satisfies the item \ref{iii}.

Consider
$\displaystyle \tilde J_2(x) : = \inf_t J_2 (t,x)$.  Notice that for each $x$ either the infimum is attained at some finite $t$ or  there is a sequence $t_i \to \infty$ such that $\displaystyle \lim_{i \to \infty} 
\frac{\partial}{\partial t} J_2 (t_i, x) = 0$ and $\displaystyle \lim_{i\to \infty} J(t_i, x) =\tilde J (x)$. As $-\partial_t J_2 - \frac{1}{2} \Delta J_2  \ge -L \ge -D$, this implies that $\tilde J$ is a supersolution to \eqref{eqn:var-D}, therefore
\begin{align*}
 \psi_3 \le \tilde J_2 \le J_2. 
\end{align*}

		Notice that 
		$J_2$ is also a supersolution of $V_{\psi_3}[J_2]\geq 0$. 	Therefore, if we let $J_3$ be the smallest supersolution  to solve $V_{\psi_3}[J_3]=0$ then  finally $(\psi_3,J_3)$ satisfy items \ref{i}, \ref{ii}, \ref{iii}, 
		and  as $\psi_3 \ge \psi_2$ and $J_3 \le J_2$, we did not decrease the dual value more than $\epsilon$. This completes the proof.
	\end{proof}

	\begin{remark}
		Given $\psi$ satisfying items \ref{i}, \ref{ii}, \ref{iii},
		we may always select a representative that is lower semicontinuous (denoted by $LSC(\overline{O}))$.  Then the value function $J_\psi$ is lower semicontinuous and is a `discontinuous viscosity solution' to $V_\psi[J]=0$ \cite{ceci2004mixed}.  Furthermore it is verified in \cite{bensoussan2011applications} (Chapter 3, Section 2) that the value function to the optimal stopping problem with terminal reward $\psi$ is the ``minimal weak solution'' to $V_\psi[J_\psi]=0$.  However, $J_\psi$ no longer coincides with the infimum over smooth supersolutions to $V_\psi[J]\geq 0$, e.g.\ consider a case where $J_\psi\equiv\psi$ and $\psi$ is not upper semicontinuous.
	\end{remark}

	The proof of dual attainment is now straightforward using Proposition \ref{prop:dual_estimates}.
	\begin{theorem}\label{thm:strong_duality}
		Suppose {\normalfont \ref{itm:convex_support}, {\normalfont \ref{itm:L2_initial}}, \ref{itm:subharmonic_order}}, {\normalfont  \ref{itm:continuous}} and {\normalfont \ref{itm:bounded}}. The maximum of the Eulerian dual problem is attained at $\psi\in  LSC(\overline{O})\cap H_0^1(O)$, and $J_\psi\in  LSC(\overline{O})\cap\mathcal{X}$ which is the minimal weak solution of $V_\psi[J_\psi]=0$.
		
		For $(\eta,\rho)$ that minimize the Eulerian problem $\mathcal{P}_1(\mu,\nu)$ the complementary slackness condition holds:
		\begin{align}\label{eqn:complementary_slackness_S}
 	 		0=&\ \int_{{O}}\int_{\R^+}\Big[J(t,x)-\psi(x)\Big]{\rho}(dt,dx)\nn\\
 	 		&\ + \int_{\R^+}\int_{O}\Big[-\eta(t,x)\frac{\partial}{\partial t}J(t,x) +\frac{1}{2} \nabla J(t,x)\cdot\nabla \eta(t,x)+L(t,x)\eta(t,x)\Big] dxdt.
 	 	\end{align}

	\end{theorem}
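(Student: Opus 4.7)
My plan is to extract a weak limit of a maximizing sequence drawn from the constrained class provided by Proposition~\ref{prop:dual_estimates}, pass to the weak limit in the variational inequality, and then identify the limit with the value function via the theory of \cite{bensoussan2011applications}. The main technical obstacle will be preserving the \emph{quasi}-variational coupling in $V_\psi[J]\geq 0$ under the weak limit, not the scalar linear inequalities it contains.

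For compactness, Proposition~\ref{prop:dual_estimates} furnishes a maximizing sequence $(\psi_n, J_n)$ with $\psi_n(x)\leq J_n(t,x)\leq 0$ and $-\frac{1}{2}\Delta \psi_n\geq -D$ weakly. Comparison with the Poisson solution of $-\frac{1}{2} \Delta \phi = -D$, $\phi|_{\partial O} = 0$, gives a uniform $L^\infty$ bound on $\psi_n$; testing $-\frac{1}{2} \Delta \psi_n \geq -D$ against $-\psi_n\in H^1_0(O)$ then supplies a uniform $H^1_0(O)$ bound. Replacing each $J_n$ by the minimal weak solution $J_{\psi_n}$ of $V_{\psi_n}[J]=0$ (which can only increase the dual value by minimality), the pointwise bound $\psi_n \leq J_n \leq 0$ together with the weak parabolic inequality $-\partial_t J_n - \frac{1}{2} \Delta J_n \leq L \leq D$ yields a uniform bound on $\|J_n\|_{\mathcal{X}}$ by the same exponentially weighted energy estimate used in Lemma~\ref{lem:regularity}. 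Extracting a subsequence, $\psi_n\rightharpoonup \psi$ weakly in $H^1_0(O)$ and strongly in $L^2(O)$ by Rellich, and $J_n\rightharpoonup J$ weakly in $\mathcal{X}$ with a.e.\ convergence by Aubin--Lions.

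The obstacle inequality $J\geq \psi$ and the parabolic supersolution inequality $-\partial_t J - \frac{1}{2} \Delta J + L \geq 0$ each pass to the limit, being linear when tested against nonnegative test functions. The delicate step is to recover the weak quasi-variational inequality $V_\psi[J]\geq 0$ in the sense of Definition~\ref{def:variational_inequality}; I would do this by invoking the equivalent formulation of weak supersolutions to $V_\psi$ as precisely the conjunction of these two scalar supersolution inequalities (recorded in the appendix, Section~\ref{S:appendix}), a characterization that is clearly stable under weak limits. Replacing $\psi$ by its lower semicontinuous envelope (this preserves the $H^1_0$ class and leaves $\int \psi \, d\nu$ unchanged by \ref{itm:L2_initial}) and $J$ by the minimal weak solution $J_\psi$ to $V_\psi[J_\psi] = 0$, the result of \cite{bensoussan2011applications} identifies $J_\psi$ with the value function (\ref{eqn:dynamic_programming}) and guarantees its lower semicontinuity in $(t,x)$. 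Attainment follows from upper semicontinuity of the dual value along the maximizing sequence: $\int\psi_n \, d\nu\to \int\psi\, d\nu$ by strong $L^2$ convergence and \ref{itm:L2_initial}, while $\psi\mapsto J_\psi(0,y)$ is convex lower semicontinuous as a supremum of linear functionals, so $\limsup_n \bigl(-\int J_{\psi_n}(0,\cdot)\,d\mu\bigr) \leq -\int J_\psi(0,\cdot)\, d\mu$ by Fatou.

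The complementary slackness identity (\ref{eqn:complementary_slackness_S}) is then an immediate consequence of strong duality. Taking $(\eta,\rho)\in L^2_\gamma(\R^+; H^1_0(O))\times \mathcal{X}^*$ optimal for $\mathcal{P}_1(\mu,\nu)$ by Proposition~\ref{prop:Skorokhod_existence} and plugging $(\psi, J_\psi)$ into (\ref{eqn:Eulerian_target}) and (\ref{eqn:Skorokhod_evolution}) as test functions (after a standard mollification), the spatial integration by parts (valid because $\eta\in H^1_0(O)$ in space) gives
\begin{align*}
\mathcal{D}_1(\mu,\nu) = -\int_{\overline O}\int_{\R^+}(J-\psi)\, d\rho + \int_{\R^+}\int_O \Big[-\eta\, \partial_t J + \tfrac{1}{2} \nabla J\cdot \nabla \eta\Big]\, dx\, dt.
\end{align*}
Combining this with $\mathcal{P}_1(\mu,\nu) = \int\int L\, \eta\, dx\, dt$ rearranges to (\ref{eqn:complementary_slackness_S}); since each summand on the right is nonnegative (using $J\geq \psi$, $\rho\geq 0$, $\eta\geq 0$, and the weak parabolic supersolution inequality), equality forces both integrals to vanish.
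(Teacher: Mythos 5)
Your overall strategy matches the paper's: use Proposition~\ref{prop:dual_estimates} to get the $L^\infty$, $H^1_0$, and $\mathcal{X}$ compactness estimates, extract a weak limit, verify the weak variational inequality passes to the limit, and then invoke \cite{bensoussan2011applications} to identify the minimal weak solution with the value function. The complementary slackness derivation is also essentially the same — test the optimizers in (\ref{eqn:Eulerian_target}) and (\ref{eqn:Skorokhod_evolution}) and use strong duality.

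However, there are a few inaccuracies worth flagging. First, you state the parabolic inequality for the $\mathcal{X}$-bound as $-\partial_t J_n - \tfrac12\Delta J_n \leq L \leq D$, but the weak supersolution inequality in Definition~\ref{def:variational_inequality} gives $-\partial_t J_n - \tfrac12\Delta J_n + L \geq 0$, that is $-\partial_t J_n - \tfrac12\Delta J_n \geq -L \geq -D$ — the opposite direction. The $\mathcal{X}$-bound is what Proposition~\ref{prop:viscosity_variational_equivalence} provides (not Lemma~\ref{lem:regularity}, which bounds $\eta$, not $J$); its proof tests against $w = {\rm e}^{-2\gamma t}(\psi - J_\psi)$, using the full equality rather than just the supersolution inequality. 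Your sketch can be made to work by testing the supersolution inequality against $w = -{\rm e}^{-2\gamma t} J_n \geq 0$ and using the pointwise bound, but as written the sign is backwards. Second, your displayed formula for $\mathcal{D}_1$ has both signs flipped in the second integral: the correct identity, obtained from (\ref{eqn:Skorokhod_evolution}) with $w=J$ after spatial integration by parts, is $\mathcal{D}_1 = -\int\int(J-\psi)\,d\rho + \int\int\bigl[\partial_t J\,\eta - \tfrac12\nabla J\cdot\nabla\eta\bigr]$; your version would not rearrange to (\ref{eqn:complementary_slackness_S}). Third, your attainment argument via lower semicontinuity of $\psi \mapsto J_\psi(0,y)$ ``as a supremum of linear functionals'' is more delicate than you indicate: the functionals $\psi \mapsto \mathbb{E}^{0,y}[\psi(B_\tau)]$ are not continuous in $L^2$ when the law of $B_\tau$ is singular (e.g.\ $\tau=0$). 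The paper's route is cleaner and avoids this issue entirely: the weak limit $J$ already has $J_n(0,\cdot) \rightharpoonup J(0,\cdot)$ in $L^2(O)$ (by the $\mathcal{X} \hookrightarrow C_{-\gamma}(\R^+;L^2)$ embedding), which together with $\mu \in L^2$ gives convergence of $\int J_n(0,\cdot)\,d\mu$; one then replaces $J$ by $J_\psi \leq J$ only at the very end, which can only increase the dual value. You should use that argument rather than the Fatou one.
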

	\begin{proof}
		We consider a maximizing sequence to the dual problem.
		By Proposition 
		\ref{prop:dual_estimates} 
		we may  assume that 
		$(\psi^i,J^i)\in H_0^1(O)\times \mathcal{X}$ with $V_{\psi^i}[J^i]\geq 0$,  
		$\psi^i\leq J^i\leq 0$, 
		and $-\Delta \psi^i(x)\geq -D$.  
		The last two conditions and the maximum principle imply that there is a constant $K$ depending on $D$ and $O$ such that $\psi^i\geq -K$. Therefore, we have the uniform bound on $\|\psi^i\|_{H_0^1(O)}$ given by
		\begin{align}
			\int_O |\nabla \psi^i(x)|^2dx =  &\  \int_{{O}} -\Delta \psi^i(x) \psi^i(x) dx\nn\\  
			\leq &\ \int_{{O}} D |\psi^i(x)|dx\leq DK|O|.
		\end{align}
		Then there is a subsequence such that ${\psi}^{i_k}\rightharpoonup \psi$ in $H_0^1(O)$. 
		Similarly, $J^{i_k}\rightharpoonup J$ in $\mathcal{X}$ (since $\|J^{i_k}\|_{\mathcal{X}}\leq C\|\psi^{i_k}\|_{H_0^1}$ by Proposition \ref{prop:viscosity_variational_equivalence}) with $V_\psi[J]\geq 0$ and $\psi$ and $J$ attain the maximum dual value.    Properties \textbf{ii.}\ and \textbf{iii.}\ are preserved in the weak limit, and Proposition \ref{prop:lower_semicontinuity} shows that $\psi$ has a lower semicontinuous representative.

		The optimizers $J$ and $\psi$ 
		now have  sufficient regularity to be used as   test functions in (\ref{eqn:Skorokhod_evolution}) and (\ref{eqn:Eulerian_target}) 
		to obtain
		\begin{align}\label{eqn:J_test}
 	 		\int_{{O}}J(0,y)\mu(y)dy=&\ \int_{\R^+}\int_{O}\Big[-\eta(t,x)\frac{\partial}{\partial t}J(t,x)+\frac{1}{2}\nabla J (t,x)\cdot\nabla\eta(t,x)\Big]dxdt \nn\\
		 	&\ +\int_{{O}}\int_{\R^+} J(t,x){\rho}(dt,dx); \\
		 	\int_{\overline{O}} \psi(z)\, \nu(dz) =&\ \int_{\overline{O}}\int_{\R^+} \psi (x){\rho}(dt,dx)\nn
		\end{align}
		In view of Theorem \ref{thm:Eulerian_weak_duality} we have
		$$
			\int_O \psi(z)\nu(z)dz-\int_{{O}}J(0,y)\mu(y)dy=\int_{\R^+}\int_O L(t,x)\eta(t,x)dxdt
		$$
		and add to (\ref{eqn:J_test}) we get (\ref{eqn:complementary_slackness_S}).
		Finally, we may also replace $J$ by the minimal weak solution to $V_\psi[J_\psi]=0$. 
	\end{proof}

	As a consequence of Theorem \ref{thm:strong_duality} we also have attainment of the problem $\mathcal{D}_0(\mu,\nu)$ in a weaker regularity class.

	\begin{corollary}\label{cor:martingale_attainment}
		Suppose the same hypotheses as Theorem {\normalfont\ref{thm:strong_duality}}, then the optimizers $(\psi,J_\psi)$ may be selected to satisfy the dynamic programming principle
		\begin{align}\label{eqn:dynamic_programming_2}
			J_\psi(t,x):=\sup_{ \tau \in \mathcal{R}^{t,x}}\Big\{\mathbb{E}^{t,x}\Big[\psi(B_\tau)-\int_t^\tau L(s,B_s)ds\Big]\Big\}.
		\end{align}
		Moreover, the process
		$$
			G_t:=J_\psi(t,B_t)-\int_0^t L(s,B_s)ds
		$$
		is a lower semicontinuous supermartingale (assuming $G_t=G_{t\wedge \tau_O}$ for the exit time $\tau_O$), satisfies $G_t-\psi(B_t)\geq -\int_0^tL(s,B_s)ds$, and attains the optimal cost
		$$
			\int_{\overline{O}} \psi(z)\nu(dz)-\mathbb{E}\big[G_0\big]=\mathcal{P}_0(\mu,\nu).
		$$
		For $\tau$ that minimizes the primal problem $\mathcal{P}_0(\mu,\nu)$, we have that $G_{t\wedge \tau}$ is a martingale and $G_\tau = \psi(B_\tau)-\int_0^\tau L(t,B_t)dt$.
	\end{corollary}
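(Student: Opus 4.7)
The plan is to take the optimizers $(\psi,J_\psi)$ produced in Theorem~\ref{thm:strong_duality} and to extract all four claims of the corollary from (i) the identification of the minimal weak solution with the optimal-stopping value function, which follows from the remark immediately preceding Theorem~\ref{thm:strong_duality} and the cited framework of \cite{bensoussan2011applications}, and (ii) the complementary slackness condition \eqref{eqn:complementary_slackness_S}. That identification delivers the dynamic programming representation \eqref{eqn:dynamic_programming_2} with no further work.

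For the supermartingale property of $G_t := J_\psi(t,B_t) - \int_0^t L(s,B_s)\,ds$ (stopped at $\tau_O$ by convention), I would avoid a direct It\^o expansion, since $J_\psi$ has only $\mathcal{X}$-Sobolev regularity, and instead argue from the dynamic programming principle: for $s\ge t$, any $\tau\in\mathcal{R}^{s,B_s}$ is admissible from $(t,B_t)$, so \eqref{eqn:dynamic_programming_2} together with the tower property yields
\[
\mathbb{E}\Bigl[J_\psi(s,B_s)-\int_t^s L(u,B_u)\,du \,\Big|\, \mathcal{F}_t\Bigr]\le J_\psi(t,B_t),
\]
which is exactly the supermartingale property of $G$. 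Lower semicontinuity of $\omega\mapsto G_t(\omega)$ follows from that of $\psi$ via Fatou applied to \eqref{eqn:dynamic_programming_2}. The pathwise inequality $G_t-\psi(B_t)\ge -\int_0^t L\,ds$ is the obstacle condition $J_\psi\ge\psi$. For the cost, assumption \ref{itm:convex_support} gives $\mathbb{E}[G_0]=\int J_\psi(0,y)\mu(dy)$, and Proposition~\ref{prop:dual_equivalence} together with the main duality chain produces $\int \psi\,d\nu-\mathbb{E}[G_0]=\mathcal{D}_1(\mu,\nu)=\mathcal{P}_0(\mu,\nu)$.

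The martingale property at an optimal $\tau$ is the most substantive piece, and I would derive it from complementary slackness. Let $\tau$ minimize $\mathcal{P}_0(\mu,\nu)$, and let $(\eta,\rho)$ be the admissible pair attached to $\tau$ by Proposition~\ref{thm:stochastic_embedding}; by weak duality this pair is optimal for $\mathcal{P}_1(\mu,\nu)$, so \eqref{eqn:complementary_slackness_S} applies. Both summands are nonnegative ($J_\psi\ge\psi$ and $\rho\ge 0$ for the first; $-\partial_t J_\psi-\tfrac12\Delta J_\psi+L\ge 0$ weakly and $\eta\ge 0$ for the second), so each vanishes. The first gives $\mathbb{E}[J_\psi(\tau,B_\tau)-\psi(B_\tau)]=\int[J_\psi-\psi]\,d\rho=0$, forcing $J_\psi(\tau,B_\tau)=\psi(B_\tau)$ almost surely, whence $G_\tau=\psi(B_\tau)-\int_0^\tau L\,ds$. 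Taking expectation, $\mathbb{E}[G_\tau]=\int\psi\,d\nu-\mathcal{P}_0(\mu,\nu)=\mathbb{E}[G_0]$, and a supermartingale whose mean agrees at $0$ and at the finite-expectation stopping time $\tau$ must be a martingale on $[0,\tau]$ by optional sampling.

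The main obstacle is the low regularity of $J_\psi$: there is no immediate It\^o expansion of $J_\psi(t,B_t)$, so neither the supermartingale property nor optional sampling is automatic. I would resolve this by working inside the Snell-envelope/variational-inequality framework of \cite{bensoussan2011applications}, which is built precisely for lower semicontinuous obstacles in the Sobolev class considered here and produces an LSC supermartingale version of $J_\psi(t,B_t)-\int_0^t L\,ds$ satisfying the DPP and standard optional sampling; the identification of their value function with our minimal weak solution, already invoked, makes the transfer seamless.
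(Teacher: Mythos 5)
Your proposal is correct and follows the paper's approach in all essential respects: the identification of the minimal weak solution with the value function via \cite{bensoussan2011applications}, the supermartingale property read off from the dynamic programming principle (avoiding It\^o due to low regularity), the pathwise obstacle inequality, and the cost identity via the duality chain. The only minor deviation is in the final step: you invoke the complementary slackness condition \eqref{eqn:complementary_slackness_S} to first establish $J_\psi(\tau,B_\tau)=\psi(B_\tau)$ a.s.\ and then deduce $\mathbb{E}[G_\tau]=\mathbb{E}[G_0]$, whereas the paper goes the other way -- it uses the duality identity $\mathbb{E}[\int_0^\tau L\,dt]=\int\psi\,d\nu-\mathbb{E}[G_0]$ together with $J_\psi\geq\psi$ to get $\mathbb{E}[G_\tau]\geq\mathbb{E}[G_0]$, then combines with the supermartingale inequality $\mathbb{E}[G_\tau]\leq\mathbb{E}[G_\sigma]\leq\mathbb{E}[G_0]$ for $0\leq\sigma\leq\tau$ to obtain both the martingale property and, as a byproduct, $G_\tau=\psi(B_\tau)-\int_0^\tau L\,dt$. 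Both routes rest on the same ingredients; yours takes a small detour through the Eulerian complementary slackness condition, which the paper does not need for this step.
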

	\begin{proof}
		Given lower semicontinuous $\psi$, the minimal weak solution $J$ to $V_\psi[J]=0$ (with the Dirichlet boundary condition) coincides with the value function $J_\psi$ that satisfies \eqref{eqn:dynamic_programming_2}; see Theorem 4.6 of Chapter 3 in \cite{bensoussan2011applications} and note that since $\psi$ is lower semicontinuous it may be approximated from below by continuous functions.

		By Proposition \ref{prop:dual_equivalence}, $(\psi,G)$ also attain the value of $\mathcal{D}_0(\mu,\nu)$, although $G$ is only lower semicontinuous on the space of paths. The process $G$ satisfies 
		$$G_t-\psi(B_t)\geq -\int_0^tL(s,B_s)ds,$$ 
		for $t<\tau_O$ and $(\psi,G)$ has the same value that equals $\mathcal{P}_0(\mu,\nu)$.  That $G$ is a supermartingale follows from the dynamic programming principle (\ref{eqn:dynamic_programming_2}):
		$$
			G_t \geq \mathbb{E}\Big[ J_\psi(\sigma,B_\sigma)-\int_t^\sigma L(s,B_s)ds\Big|\mathcal{F}_t\Big]-\int_0^tL(s,B_s)ds=\mathbb{E} \big[G_\sigma\big|\mathcal{F}_t\big]
		$$  
		for any stopping time $t\leq\sigma\leq \tau_O$.

		If $\tau$ minimizes $\mathcal{P}_0(\mu,\nu)$ then
		$$
			\mathbb{E}\Big[\int_0^\tau L(t,B_t)dt\Big] = \mathbb{E}\big[\psi(B_\tau)-J_\psi(0,B_0)\big],
		$$
		which implies that $\mathbb{E}[G_\tau]\geq \mathbb{E}[G_0]$.  For any stopping time $0\leq \sigma \leq \tau$, since $G$ is a supermartingale $\mathbb{E}[G_\tau]\leq \mathbb{E}[G_\sigma]\leq \mathbb{E}[G_0]$, and it follows that $G_{t\wedge \tau}$ is a martingale and $G_\tau = \psi(B_\tau)-\int_0^\tau L(t,B_t)dt$ almost surely.
	\end{proof}

\section{Optimal Stopping by Hitting of Barrier}\label{sec:hitting_times}

 	We wish to characterize when the optimal stopping time is given by the hitting time of a barrier. 
	From Theorem \ref{thm:strong_duality} the process must stop in the set where $J(t,x)=\psi(x)$. We find that this set is given by the epi/hypergraph of a function $s(x)$ if $L$ is either strictly increasing or decreasing in time. In these cases the hitting time must be the first time the process enters this set. We denote the cases of time dependence of $L$ as:

 	\begin{enumerate}[label=(\textbf{D\arabic*})]
 		\item \label{itm:compounded} The cost function $L$ strictly increases in time;
	 	\item \label{itm:discounted} The cost function $L$ strictly decreases in time;
	 	\item \label{itm:stationary} The cost function $L$ is stationary in time.
	\end{enumerate}

\begin{remark}
 One may compare the results in this section to those  by Beiglb\"{o}ck, Cox and Huesmann \cite[Section 7]{beiglboeck2017optimal}, where they obtain similar results in cases comparable to those we consider below. While their approach is based on a stochastic variational principle on path space analogous to cyclic monotonicity in optimal transport, ours uses the Eulerian formulation and the corresponding dual variational inequalities  developed in the previous sections insipired by the Kantorovich duality approach to optimal transport. 
\end{remark}

	We start with the following monotonicity and regularity results for the dual functions. 
	\begin{proposition}\label{prop:dual_monotonicity}
		Given {\normalfont\ref{itm:continuous}}, we suppose that $\psi\in LSC(\overline{O})\cap H_0^1(O)$ and $J_\psi$ is the minimal weak solution of $V_\psi[J_\psi]=0$.

		\begin{itemize}
			\item Given {\normalfont\ref{itm:compounded}} then $J_\psi$ is non-increasing in time.  
			\item Given {\normalfont\ref{itm:discounted}} then $J_\psi$ is non-decreasing in time. 
			\item Given {\normalfont\ref{itm:stationary}} then $J_\psi$ is stationary in time.
		\end{itemize}
		In each case we define the free boundary
		\begin{align}\label{eqn:free_boundary}
			s(x):=\begin{cases}
				\inf\{t\in \R^+;\ J_\psi(t,x)=\psi(x)  \} & {\normalfont\ref{itm:compounded}} \\
				\sup\{t \in \R^+;\ J_\psi(t,x)=\psi(x) \} & {\normalfont\ref{itm:discounted}}\\
				 0  & {\normalfont\ref{itm:stationary}}.
		\end{cases}
		\end{align}
	\end{proposition}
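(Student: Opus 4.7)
The plan is to invoke the value function representation of $J_\psi$ given in Corollary~\ref{cor:martingale_attainment} and transfer the monotonicity of $L$ in time into the corresponding monotonicity of $J_\psi$ via a simple time-shift argument that exploits the time-homogeneity of Brownian motion. Once this monotonicity is in hand, the variational inequality constraint $J_\psi \geq \psi$ will force the coincidence set $\{t : J_\psi(t,x) = \psi(x)\}$ to be a half-line in $t$, making the three definitions in \eqref{eqn:free_boundary} of $s(x)$ immediate.

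Concretely, I would fix $x \in O$, pick $t_1 < t_2$, and set $h := t_2 - t_1$. Writing $B^{(1)}$, $B^{(2)}$ for Brownian motion starting at $x$ at times $t_1$, $t_2$ respectively, time-homogeneity implies $B^{(1)}_{s}$ has the same law as $B^{(2)}_{s+h}$, and the map $\tau \mapsto \tau + h$ is a bijection $\mathcal{R}^{t_1,x} \to \mathcal{R}^{t_2,x}$. The substitution $s \mapsto s+h$ in the Lagrangian integral, together with the $t$-independence of $\psi$, yields
$$\mathbb{E}^{t_2,x}\Big[\psi(B^{(2)}_{\tau+h}) - \int_{t_2}^{\tau+h} L(s, B^{(2)}_s)\, ds\Big] = \mathbb{E}^{t_1,x}\Big[\psi(B^{(1)}_{\tau}) - \int_{t_1}^{\tau} L(s+h, B^{(1)}_s)\, ds\Big].$$
Under \ref{itm:compounded}, the pointwise bound $L(s+h, y) \geq L(s, y)$ dominates the right-hand side by $\mathbb{E}^{t_1,x}\bigl[\psi(B^{(1)}_\tau) - \int_{t_1}^{\tau} L(s, B^{(1)}_s)\, ds\bigr] \leq J_\psi(t_1, x)$, and supping over $\tau \in \mathcal{R}^{t_1,x}$ yields $J_\psi(t_2, x) \leq J_\psi(t_1, x)$. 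Case \ref{itm:discounted} is symmetric with inequalities reversed, and \ref{itm:stationary} applies both directions to give equality.

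Given this monotonicity, since $J_\psi(\cdot, x) \geq \psi(x)$ pointwise from the variational inequality, the coincidence set $C(x) := \{t \geq 0 : J_\psi(t, x) = \psi(x)\}$ is closed upward in case \ref{itm:compounded} (once $J_\psi$ drops to $\psi$, it cannot rise again), closed downward in case \ref{itm:discounted}, and trivially constant in case \ref{itm:stationary}, so $s(x) = \inf C(x)$ and $s(x) = \sup C(x)$ in the respective cases are well-defined. The only mild subtlety I foresee is that elements of $\mathcal{R}^{t,x}$ are not a priori constrained by $\tau_O$, but the Dirichlet conditions $\psi|_{\partial O} = 0$ and $J_\psi|_{\partial O} = 0$ make the reward after exit vanish and the shift invariant; this bookkeeping is the only delicate point, and I do not expect any substantive obstruction.
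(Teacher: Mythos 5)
Your proof is correct and takes essentially the same approach as the paper's: both rely on the value-function (dynamic programming) representation of $J_\psi$ and a time-shift argument pairing $\tau \in \mathcal{R}^{t_1,x}$ with $\tau + (t_2-t_1) \in \mathcal{R}^{t_2,x}$, using the monotonicity of $L$ in $t$ to compare the Lagrangian integrals. The paper phrases it by picking a near-optimal $\tau_2$ for $J_\psi(t_2,x)$ and shifting back, while you shift forward and sup over the bijection, but these are the same calculation.
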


	\begin{proof}
		We use the dynamic programming principle for the value function (\ref{eqn:dynamic_programming_2}) for the proof.  We first suppose {\normalfont\ref{itm:compounded}} and fix $x$, $t_1\leq t_2$.  For any $\epsilon>0$, there is a stopping time $\tau_2\in \mathcal{R}^{t_2,x}$, i.e.\ $\tau_2\geq t_2$, that nearly achieves the supremum of (\ref{eqn:dynamic_programming_2}) such that
		$$
			J_\psi(t_2,x) \leq \mathbb{E}^{t_2,x}\Big[ \psi\big(B_{\tau_2}\big)-\int_{t_2}^{\tau_2} L\big(s,B_s\big)ds\Big] +\epsilon.
		$$
		Then we let $\tau_1 = \tau_2+t_1-t_2 \in \mathcal{R}^{t_1,x}$ and the dynamic programming principle implies
		$$
			J_\psi(t_1,x) \geq \mathbb{E}^{t_1,x}\Big[ \psi\big(B_{\tau_1}\big)-\int_{t_1}^{\tau_1} L\big(s,B_s\big)ds\Big]\geq J_\psi(t_2,x)-\epsilon.
		$$
		For the second inequality we use {\normalfont\ref{itm:compounded}} as $-L(s,y)\geq -L(s+t_2-t_1,y)$. Taking $\epsilon$ to 0 shows that $J_\psi(t_1,x) \geq J_\psi(t_2,x)$.

		Given {\normalfont\ref{itm:discounted}} the proof follows the same line by first selecting $\tau_1$ to be nearly optimal starting from $t_1$ and defining $\tau_2=\tau_1+t_2-t_1$.  The same argument yields the inequality $J_\psi(t_2,x)\geq J_\psi(t_1,x)$.   

		With {\normalfont\ref{itm:stationary}}, both arguments are valid, showing $J_\psi(t_1,x)=J_\psi(t_2,x)$.
	\end{proof}

	\begin{remark}
		An alternate proof of monotonicity for $J_\psi$ by viscosity solution methods is possible if $\psi$ is continuous. For example given {\normalfont\ref{itm:compounded}}, if we consider
		$$
			\hat{J}(t,x) := \sup\big\{J_\psi(r,x);\ t\leq r\},
		$$
		obviously $\hat{J}\geq J_\psi$ and $\hat{J}$ is non-increasing in time.  Then we can show that $\hat{J}$ is a viscosity subsolution to $V_\psi[\hat{J}]\leq 0$ by translating the comparison function, after which the comparison principle proves $\hat{J}=J_\psi$.
	\end{remark}

	The  monotonicity has allowed us to define the free boundary 
	$s(x)$ of (\ref{eqn:free_boundary}). Then for the case of \ref{itm:compounded} (resp.\ \ref{itm:discounted}) we have the barrier set 
	$$R_c:= \{(t,x);\ t\geq (\hbox{resp.} \le)\, s(x)\}$$ and its `interior' $$R_o:=\{(t,x);\ t> (\hbox{resp.} <)\, s(x)\}.$$

	  We let $\tau_o$ and $\tau_c$ denote their corresponding first hitting-time: 
	\begin{align}\label{eqn:hitting_time}
		\tau_o= \inf\{t; B_t\in R_o\},\ \ \ \tau_c= \inf\{t; B_t\in R_c\}.
	\end{align}

	\begin{remark}The cases {\normalfont \ref{itm:compounded}} and {\normalfont\ref{itm:discounted}} correspond to the classical Root and Rost embeddings, respectively (see Figure {\normalfont(\ref{fig:inc_dec})}).  The case that $L$ decreases until some fixed time $t_0$ and later increases also implies the corresponding monotonicity of $J_\psi$ changing accordingly.  These so-called `cave embeddings' have been pointed out in \cite{beiglboeck2017optimal}. A later work by Cox and Kinsley \cite{cox2017robust} deal with a specific one-dimensional non-Lagrangian cost that switches between Root and Rost-like frameworks along a fixed curve $t_0(x)$. They then identify an explicit `cave barrier' that determines the embedding by using a PDE heuristic, which would be interesting to see if it could be made rigorous by a suitable refinement of our analysis. 
\end{remark}

	The following Lemma shows how the barriers determine the density and stopping distribution of the process given that the process always stops in $R_o$ and continues in the complement of $R_c$.  In the case of \ref{itm:compounded} there is only one such admissible pair ($\eta,\rho$), whereas for \ref{itm:discounted} the uniqueness holds up to the choice of how much mass to stop at time $0$ on the set where $s(x)=0$.

 	\begin{lemma}\label{lem:Eulerian_uniqueness}
	Suppose {\normalfont \ref{itm:convex_support}, {\normalfont \ref{itm:L2_initial}}, \ref{itm:subharmonic_order}}, {\normalfont   \ref{itm:continuous}} and {\normalfont \ref{itm:bounded}}. 
		We suppose $R$ is a measurable forward-barrier such that $(r,x)\in R$ whenever $(t,x)\in R$ with $t\leq r$, and $(t,x)\in R$ if there is $(t_i,x)\in R$ with $t_i\rightarrow t$.  Then given $\mu$ there is a unique solution $(\eta,\rho)$ to {\normalfont(\ref{eqn:Skorokhod_evolution})} with $\eta(R)=0$ and $\rho(R)=1$.

		If instead $R$ is a measurable backward-barrier such that $(r,x)\in R$ whenever $(t,x)\in R$ with $t\geq r$, and $(t,x)\in R$ if there is $(t_i,x)\in R$ with $t_i\rightarrow t$, then the solution $(\eta,\rho)$ to {\normalfont(\ref{eqn:Skorokhod_evolution})} with $\eta(R)=0$ and $\rho(R)=1$ is uniquely determined given the value of $\rho$ on the set $(0,x)$ where $s(x)=0$.
	\end{lemma}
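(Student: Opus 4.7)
The plan is to establish existence directly via Proposition~\ref{thm:stochastic_embedding}, and to prove uniqueness by a duality argument with Feynman-Kac test functions. For existence in the forward case, let $\tau := \inf\{t \geq 0 : (t, B_t) \in R\} \wedge \tau_O$ with $B_0 \sim \mu$; the temporal closure of $R$ ensures $\tau$ is a well-defined stopping time and $(\tau, B_\tau) \in R$ almost surely, so the pair $(\eta, \rho)$ induced by $\tau$ via Proposition~\ref{thm:stochastic_embedding} has $\eta(R) = 0$ and $\rho(R) = 1$. The backward case is analogous, using $\tau = 0$ on $\{B_0 = y : s(y) > 0\}$ together with a prescribed rule on $\{s(y) = 0\}$ for the degree of freedom, and otherwise the first re-entry time into $R$.

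For uniqueness in the forward case, let $(\bar\eta, \bar\rho)$ denote the difference of two candidate solutions; it satisfies (\ref{eqn:Skorokhod_evolution}) with $\mu$ replaced by $0$, with $\bar\eta$ supported in $R^c = \{(t,x) : t < s(x)\}$ and $\bar\rho$ in $R$. For each $f \in C^\infty_c$ supported in the interior of $R^c$, I would introduce the Feynman-Kac test function
\[
\phi(t, x) := \E^{t,x}\!\left[\int_t^\sigma f(r, B_r)\, dr\right], \qquad \sigma := \inf\{r \geq t : (r, B_r) \in R\} \wedge \tau_O,
\]
which vanishes on $R$ and on $\R^+ \times \partial O$ and satisfies $\partial_t \phi + \tfrac{1}{2}\Delta \phi = -f$ on the interior of $R^c$. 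Plugging $\phi$ into the weak evolution gives
\[
0 = \int [\partial_t \phi + \tfrac{1}{2}\Delta \phi]\, \bar\eta\,dt\,dx - \int \phi\, \bar\rho = -\int f\, \bar\eta\,dt\,dx,
\]
since $\bar\rho$ is supported where $\phi = 0$. Varying $f$ over a dense family forces $\bar\eta \equiv 0$, after which (\ref{eqn:Skorokhod_evolution}) yields $\bar\rho \equiv 0$.

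The backward-barrier case follows the same template with $\sigma := \inf\{r \geq t : r \leq s(B_r)\} \wedge \tau_O$ representing the first re-entry of the Brownian motion into the backward barrier. The test functions $\phi$ still vanish on $R \setminus \{(0, x) : s(x) = 0\}$, but $\phi(0, x)$ may be nonzero at points with $s(x) = 0$, so the duality argument determines $\bar\rho$ only up to mass on $\{(0, x) : s(x) = 0\}$. This is precisely the stated degree of freedom; once one prescribes $\rho$ on this set, the complementary part of $\bar\rho$ vanishes as in the forward case.

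The main technical obstacle is the regularity of $\phi$: since $R$ is merely measurable (with temporal closure), the boundary of $R^c$ is generally rough, and the Feynman-Kac $\phi$ need not lie in $C^{1,2}_{-\gamma}(\R^+ \times \overline{O})$ as required to serve as a test function in (\ref{eqn:Skorokhod_evolution}). I would resolve this by approximating $R$ by smoother barriers $R_n$ (with $s_n$ Lipschitz or smoother, converging monotonically to $s$), applying classical parabolic regularity to obtain $\phi_n \in C^{1,2}_{-\gamma}$, testing against each, and passing to the limit using the uniform Sobolev bound on $\eta$ from Lemma~\ref{lem:regularity} together with bounded convergence of the Feynman-Kac functionals. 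A streamlined alternative sidesteps regularity entirely: any admissible $(\eta, \rho)$ can be disintegrated into a randomized stopping time (a converse to Proposition~\ref{thm:stochastic_embedding} that follows from the same construction used there), after which the barrier constraints force this stopping time to coincide pathwise with the first hitting time of $R$, yielding uniqueness modulo the backward ambiguity above.
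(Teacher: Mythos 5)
Your proposal takes a genuinely different route from the paper's. The paper never touches Feynman--Kac test functions or an approximation of the barrier; instead it introduces the potential $U_t \in H_0^1(O)$ solving $-\tfrac12\Delta U_t = \sigma(t,\cdot) + \int_0^t\pi(ds,\cdot)$ for the difference $(\sigma,\pi)$ of two candidate solutions, observes that $\partial_t U_t = -\sigma(t,\cdot)$ and in particular $U_t$ is \emph{constant in time on $R$} (since $\sigma = 0$ there), and then tests the evolution equation against $U_t$ itself to obtain the energy identity \eqref{eqn:identity}. The barrier structure makes the cross term $\int \sigma\int_0^t\pi$ vanish, and what remains forces $\sigma = 0$ and $\nabla U_\infty = 0$, hence $\pi = 0$. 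No regularity of $R$ beyond measurability and temporal closure is used, because $U_t$ is a global $H_0^1$ function regardless of how wild the free boundary is.

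You correctly identify the regularity of $\phi$ as the obstacle, but neither of your two proposed resolutions closes the gap. For the approximation route: even if $R_n$ has a Lipschitz graph $s_n$, the Feynman--Kac function $\phi_n$ is at best $C^{1,1}$ across the free boundary, not $C^{1,2}_{-\gamma}$, so it is not a valid test function for \eqref{eqn:Skorokhod_evolution}; if you instead work in $\mathcal{X}$ (justified by Lemma~\ref{lem:regularity}), then $-\partial_t\phi_n - \tfrac12\Delta\phi_n$ picks up a singular flux measure on $\partial R_n$, and testing against $\bar\eta$ produces a boundary term $\langle \mu_n, \bar\eta\rangle$ with no sign and no clear reason to vanish in the limit, since $\bar\eta$ need not be zero on $R_n\setminus R$ when $R_n\supsetneq R$. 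Worse, when $R$ is merely measurable with a closed time-section, $R^c$ can have irregular boundary points at which $\phi$ is not even continuous, so it is not clear the $\phi_n$ converge to anything usable. For the disintegration route: the ``converse to Proposition~\ref{thm:stochastic_embedding}'' --- that \emph{every} admissible Eulerian pair $(\eta,\rho)$ arises from a randomized stopping time --- is not established in the paper, is not ``the same construction'' (Proposition~\ref{thm:stochastic_embedding} goes from a stopping time to $(\eta,\rho)$ and then disintegrates, not the reverse), and is itself a nontrivial superposition-type theorem. The lemma must hold for arbitrary admissible pairs in $L^2_\gamma(\R^+;H_0^1(O))\times \mathcal{X}^*$, which is exactly what the potential-function energy argument delivers directly. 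In short, your diagnosis of the difficulty is right, but both proposed fixes would need substantial additional machinery, whereas the paper's choice of $U_t$ as test function dissolves the regularity issue at the source.
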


	\begin{proof}
		We consider $(\sigma,\pi)\in L_{\gamma}^2(\R^+;H_0^1(O))\times \mathcal{M}_\gamma\cap \mathcal{X}^*$ that satisfy (\ref{eqn:Skorokhod_evolution}) with source distribution $\alpha\in L^2(O)$ and $\pi(\{0\},\cdot)\in L^2(O)$,
		\begin{align*}
		-\int_{\overline{O}}w(0,y)\, \alpha(dy)=&\ \int_{\overline{O}}\int_{\R^+}\Big[\frac{\partial}{\partial t}w(t,x)+\frac{1}{2}\Delta w(t,x)\Big]\sigma(dt,dx)\\
		&\ -\int_{\overline{O}}\int_{\R^+} w(t,x)\pi(dt,dx).
		\end{align*}
		We define the potential $U_t\in H_0^1(O)$ by
		$$
			\int_{\overline{O}}\frac{1}{2}\nabla h(x)\cdot\nabla  U_t(x) = \int_{\overline{O}}h(x)\sigma(t,x)dx+\int_{\overline{O}}\int_0^th(x)\pi(ds,dx)\ \forall\ h\in H_0^1(O).
		$$
		Now for smooth compactly supported functions $w\in C_c^\infty(\R^+\times \overline{O})$ that vanish on $\partial O$,
		\begin{align}
			&\ \int_{\overline{O}}\int_{\R^+}\frac{1}{2} \big(-\frac{\partial }{\partial t}\nabla w(t,x)\big)\cdot \nabla U_t(x)dtdx \nn\\
			=&\ \int_{\overline{O}}\int_{\R^+}\big(-\frac{\partial}{\partial t} w(t,x)\big)\sigma(t,x)dtdx+ \int_{\overline{O}}\int_{\R^+}\big(-\frac{\partial}{\partial t}w(t,x)\big)\int_0^t\pi(ds,dx)dt\nn\\
			=&\ \int_{\overline{O}} \int_{\R^+} -\frac{1}{2}\nabla w(t,x)\cdot \nabla \sigma(t,x)dtdx+\int_{O} w(0,x)\alpha(x)dx\nn\\
			&\ -\int_{\overline{O}} w(0,y)\pi(\{0\},dy).\nn
		\end{align}
		For any continuous function $g\in C(\R^+\times \overline{O})$ with compact support in $(0,\infty)\times O$, we may select such a $w$ with $-\frac{1}{2}\Delta w(t,x)=g(t,x)$, and after integrating by parts we obtain
		$$
			\int_{\overline{O}}\int_{\R^+} g(t,x)\frac{\partial}{\partial t}U_t(x) dtdx =\int_{\overline{O}}\int_{\R^+} -g(t,x)\sigma(t,x)dtdx.
		$$
		In particular, $\frac{\partial}{\partial t}U_t(x)=-\sigma(t,x)$ for almost every $(t,x)$.
		By the definition of $U$ we also have $-\frac{1}{2}\Delta U_0(x)=\alpha(x)-\pi(\{0\},x)$, with $U_t\rightarrow U_\infty$ as $t\rightarrow \infty$ and $-\frac{1}{2}\Delta U_\infty(x)=\int_{\R^+} \pi(dt,x)$.
		In particular the potential belongs to  $\mathcal{X}$. We note that we have not assumed the non-negativity of $\sigma$ and $\pi$.

		 Using $U_t$ as a test function in (\ref{eqn:Skorokhod_evolution}) we obtain
		\begin{align}
			\int_O U_0(x)\alpha(x)dx=&\ \int_O\int_{\R^+} \Big[\big(-\frac{\partial}{\partial t}U_t(x)\big)\sigma(t,x)+\frac{1}{2}\nabla U_t(x)\cdot \nabla\sigma(t,x)\Big]dtdx\nn\\
			&\ +\int_O\int_{\R^+}U_t(x)\pi(dt,dx)\nn\\
			=&\ \int_O\int_{\R^+} \Big[2\sigma(t,x)^2+\sigma(t,x)\int_0^t\pi(ds,x)\Big]dtdx\label{eqn:identity}\\
			&\ +\int_O\int_{\R^+}U_t(x)\pi(dt,dx).\nn
		\end{align}

		Now we consider two solutions $(\eta_0,\rho_0)$ and ($\eta_1,\rho_1$) satisfying the conditions of the lemma, and we let $\sigma=\eta_0-\eta_1$ and $\pi=\rho_0-\rho_1$.  Then we have that $(\sigma,\pi)$ satisfy (\ref{eqn:Skorokhod_evolution})  with source measure that is identically zero.  Furthermore, $\sigma=0$ a.e.\ in $R$ and $\pi(Z)=0$ for any measurable subset $Z\subset R^c$.  By Lemma \ref{lem:regularity} we have $(\sigma,\pi)\in L^2_\gamma(\R^+;H_0^1(O))\times \mathcal{X}$ and $\pi(\{0\},\cdot)\in L^2(O)$ because $\rho_0(\{0\},\cdot)\leq \mu$, and the same for $(\eta_1,\rho_1)$.

		Assuming that $R$ is a forward-barrier we have $\int_0^t\pi(ds,x)=0$ a.e.\ in $R^c$ so $\int_O\int_{\R^+}\int_0^t\pi(ds,x)\sigma(dt,dx)=0$. In $R$ we have $U_t(x)=U_\infty(x)$ due to $\frac{d}{dt}U_t(x)=-\sigma(t,x)=0$ for $(t,x)\in R$, thus 
		$$
			\int_{\overline{O}}\int_{\R^+}U_t(x)\pi(dt,dx)=\int_O\int_{\R^+}U_\infty(x)\pi(dt,dx)=\frac{1}{2}\int_O |\nabla U_\infty(x)|^2dx.
		$$  
		The initial condition for $U_t$ is $U_0=0$ because if $\rho_1$ is non-zero at $(0,x)$ then it must equal $\alpha(x)$.
		Then the identity (\ref{eqn:identity}) yields that 
		$$
			\int_O\int_{\R^+}|\sigma(t,x)|^2dtdx + \frac{1}{2}\int_O |\nabla U_\infty(x)|^2dx=0
		$$
		 and thus $\eta_0=\eta_1$ and consequentially $\rho_0=\rho_1$.

		In the case of a measurable backward-barrier, we instead have
		$$
			\int_{\overline{O}}\int_{\R^+}U_t(x)\pi(dt,dx)=\int_O\int_{\R^+}U_0(x)\pi(dt,dx)=-\frac{1}{2}\int_{O}|\nabla U_0(x)|^2dx.
		$$
		The value of $U_0$ is $0$ if $s(x)\not=0$, however,	we must consider the possibility that $\pi$ is non-zero at $(0,x)$ in which case $U_0(x)$ solves $-\frac{1}{2}\Delta U_0(x)=\pi(\{0\},x)$, thus is uniquely determined by the value of $\pi$ at $t=0$.  
	\end{proof}
\begin{remark}\label{rem:potential}
	In the above lemma, we consider a potential function $U_t$ associated to the density and stopping distribution involved in the Eulerian formulation, that essentially satisfy  the equation
	\begin{equation}\label{eq:potential}
		-\frac{1}{2}\Delta U_t(x) = \eta(t,x) + \int_0^t\rho(ds,x), 
	\end{equation}
	This potential also satisfies, in case {\normalfont\ref{itm:compounded}}, a quasivariational equation of the form
\begin{equation}\label{CH}
		\min \big\{ \partial_t U_t - \frac{1}{2}\Delta U_t,U_t-U_\nu\}=0,
	\end{equation}
	where $-\frac{1}{2}\Delta U_\nu=\nu$, and $-\frac{1}{2}\Delta U_0=\mu$. 
Note that in one-dimension and for case {\normalfont\ref{itm:compounded}}, one can consider explicitly the potential 
$$
		U_t(x)=\mathbb{E}\big[\big|x-B_{t\wedge\tau}\big|\big], 
	$$
which readily satisfies the above properties; this potential function has already been used in this particular case by \cite{cox2013root} and \cite{gassiat2015root}, and a closely related function was used for the case {\normalfont\ref{itm:discounted}} in one dimension by \cite{deangelis2018}.  	
	We thank A.\ Cox, T.\ De Angelis and M.\ Huesmann for pointing out these papers to us.     
		\end{remark}

	\begin{theorem}\label{thm:pde_characterization}
		Suppose {\normalfont\ref{itm:convex_support}}, {\normalfont\ref{itm:L2_initial}}, {\normalfont\ref{itm:subharmonic_order}}, and {\normalfont\ref{itm:bounded}}. We suppose $(\psi,J_\psi)$ are optimal and that $R_c$ and $R_o$ are defined as above with $\tau_c$ and $\tau_o$ the hitting times in {\normalfont(\ref{eqn:hitting_time})}.

		If {\normalfont \ref{itm:compounded}} then the unique optimal stopping time is given by  $\tau_c$.

 		If {\normalfont \ref{itm:discounted}} then the same holds if the support of $\mu$ and $\nu$ is disjoint.  Otherwise, if $s(x)=0$ in the support of $\mu$ then the optimal stopping time is the unique randomized stopping time that satisfies $\tau_c\leq \tau\leq \tau_o$ and $B_{\tau}\sim \nu$, in particular given $B_0=x$, $\tau=0$ occurs with probability $\nu(x)/\mu(x)$, and otherwise $\tau=\inf\{t>0;(t,B_t)\in R_c\}$.
	\end{theorem}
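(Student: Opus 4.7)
The plan is to combine the complementary slackness identity from Theorem~\ref{thm:strong_duality} with the strict monotonicity of $L(\cdot,x)$ in time to pin down the support of the optimal $(\eta,\rho)$, and then appeal to the uniqueness Lemma~\ref{lem:Eulerian_uniqueness} to match this pair with the one induced by the hitting time $\tau_c$. First, from \eqref{eqn:complementary_slackness_S}, the first summand $\int[J_\psi-\psi]\,d\rho$ and (after integration by parts, valid since $\eta\in L^2_\gamma(\R^+;H^1_0(O))$) the second summand $\int\eta[-\partial_tJ_\psi-\tfrac12\Delta J_\psi+L]\,dx\,dt$ are each nonnegative, so each must vanish: $\rho$ is concentrated on the coincidence set $F=\{J_\psi=\psi\}=R_c$ (by Proposition~\ref{prop:dual_monotonicity}), and $\eta$-a.e.\ the HJB equation holds with equality. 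Second, on $R_o$ we have $J_\psi\equiv\psi$, so $\partial_t J_\psi=0$ and the latter equality reduces to
\[
L(t,x)+\tfrac12\Delta\psi(x)=0\qquad\eta\text{-a.e.\ on }R_o.
\]
Since $L(\cdot,x)$ is strictly monotone in $t$ under \ref{itm:compounded} or \ref{itm:discounted}, for each fixed $x$ this has at most one solution in $t$; combined with $\eta\ll\mathrm{Leb}$ from Lemma~\ref{lem:regularity}, Fubini yields $\eta(R_o)=0$.

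Next I would invoke Lemma~\ref{lem:Eulerian_uniqueness}. In case \ref{itm:compounded}, $R_c$ is a closed forward-barrier; since $\partial R_c$ is Lebesgue-null and $\eta\ll\mathrm{Leb}$, we obtain $\eta(R_c)=0$ and $\rho(R_c)=1$, uniquely determining $(\eta,\rho)$. Proposition~\ref{thm:stochastic_embedding} shows $\tau_c$ induces exactly such a pair, so $\tau=\tau_c$ is the unique optimizer. In case \ref{itm:discounted}, $R_c$ is a backward-barrier and the lemma gives uniqueness up to the value of $\rho(\{0\},\cdot)$ on $\{s=0\}$. If $\mathrm{supp}\,\mu\cap\mathrm{supp}\,\nu=\emptyset$, the spatial marginal constraint $\int\rho(dt,\cdot)=\nu$ rules out any atom of $\rho$ at $t=0$ inside $\mathrm{supp}\,\mu$, so $\rho(\{0\},\cdot)=0$ and again $\tau=\tau_c$. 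If $s\equiv 0$ on $\mathrm{supp}\,\mu$, then Brownian motion leaves $\mathrm{supp}\,\mu$ instantly, so all $\nu$-mass lying over $\mathrm{supp}\,\mu$ must be absorbed at $t=0$; this forces $\rho(\{0\},dx)=\nu(dx)$ on $\mathrm{supp}\,\mu$, and disintegration over initial positions yields the stopping probability $\nu(x)/\mu(x)$ at $(0,x)$, with the remainder continuing until $\inf\{t>0:(t,B_t)\in R_c\}$.

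The main obstacle is the Fubini step: the identity $L+\tfrac12\Delta\psi=0$ requires a pointwise interpretation on the $\eta$-support, even though $\Delta\psi$ is a priori only a distribution on $O$. This can be resolved using item~\ref{iii} of Proposition~\ref{prop:dual_estimates}, which bounds $-\tfrac12\Delta\psi$ from below weakly, together with the vanishing identity itself yielding the matching upper bound $\tfrac12\Delta\psi=L\in L^\infty$ on the support of $\eta$ inside $R_o$. Once $\Delta\psi$ is thereby identified with an $L^\infty$ function there, the strict monotonicity of $L(\cdot,x)$ produces a Lebesgue-null equality set on each $x$-slice, completing the argument.
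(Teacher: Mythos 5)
Your argument is a genuine analytic alternative to the paper's proof. Where you agree with the paper: the conclusion $\rho(R_c)=1$ and $\eta(R_o)=0$, followed by Lemma~\ref{lem:Eulerian_uniqueness} and Proposition~\ref{thm:stochastic_embedding} to match $(\eta,\rho)$ with the pair induced by $\tau_c$, and the case analysis under \ref{itm:discounted}. Where you diverge is the central step $\eta(R_o)=0$. The paper does not reason through the HJB residual at all; it uses Corollary~\ref{cor:martingale_attainment} to conclude that $G_{t\wedge\tau}$ is a martingale, disintegrates this via the conditional stopping times $\tau^{t,x}$ of Proposition~\ref{thm:stochastic_embedding} to see that for $\eta$-a.e.\ $(t,x)$ the time $\tau^{t,x}$ is optimal in \eqref{eqn:dynamic_programming_2}, and then produces a contradiction for $(t,x)\in R_o$ by a direct time-shift comparison $\hat\tau^{t,x}=\tau^{t,x}-t+s(x)+\epsilon$: strict monotonicity of $L$ makes the shifted stopping time strictly better, contradicting $J_\psi(s(x)+\epsilon,x)=\psi(x)$. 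This is entirely probabilistic and never needs a pointwise reading of $\Delta\psi$ or $\Delta J_\psi$.

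Your route, via complementary slackness and the identity $L+\tfrac12\Delta\psi=0$, is sound in spirit, but the self-identified obstacle is real and your proposed resolution does not quite close it. From the vanishing of the second summand you only know $\langle\eta,m\rangle=0$ where $m:=-\partial_tJ_\psi-\tfrac12\Delta J_\psi+L$ is a nonnegative distribution (hence a nonnegative Radon measure). Even with item~\ref{iii} of Proposition~\ref{prop:dual_estimates}, $\Delta\psi$ need not be identified with an $L^\infty$ function on ${\rm supp}\,\eta$ --- bounding $\tfrac12\Delta\psi$ from above by $D$ and from above by $L$ still allows an arbitrary singular negative part, and the pairing $\langle\eta,m\rangle=0$ does not kill that part. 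What does work --- and what is missing from your write-up --- is to split $m=m_{\rm ac}+m_{\rm s}$ by Lebesgue decomposition, observe that each piece pairs nonnegatively with $\eta$ so both pairings vanish, and conclude only that the \emph{absolutely continuous} density of $m$ vanishes Lebesgue-a.e.\ on $\{\eta>0\}\cap R_o$. That gives $L(t,x)=\tfrac12(\Delta\psi)_{\rm ac}(x)$ a.e.\ there, and since the right-hand side is $t$-independent while $L(\cdot,x)$ is strictly monotone, Fubini gives Lebesgue measure zero. So the idea survives, but the claim that $\Delta\psi$ becomes $L^\infty$ on ${\rm supp}\,\eta$ --- which your Fubini step as written relies on --- is not justified and not actually needed. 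Compared with the paper's probabilistic argument, your approach buys a PDE-only derivation at the cost of these measure-theoretic subtleties; the paper's buys clean rigor at the cost of invoking the martingale/Snell machinery of Corollary~\ref{cor:martingale_attainment} and the conditional disintegration~\eqref{eqn:conditional_expectation}.
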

	\begin{proof}
		We first show that if $\tau$ is optimal then $\tau_c\leq \tau\leq \tau_o$.  From Corollary \ref{cor:martingale_attainment} we have that $J_\psi(\tau,B_\tau)=\psi(B_\tau)$ almost surely, hence $\tau_c\leq \tau$. We now show that 
		$$
			\eta(R_o)=\mathbb{E}\left[\int_0^\tau \mathbf{1}\left\{\begin{array}{ll} 1 & (t,B_t)\in R_o\\ 0 & {\rm otherwise}\end{array}\right\}dt\right]=0,
		$$
		which is equivalent to $\tau\leq \tau_o$ almost surely.  We let $(\eta,\rho)$ be the density for $B_t$ with $t\leq \tau$ and stopping distribution of $(\tau,B_\tau)$ of Proposition \ref{thm:stochastic_embedding}, and we let $\tau^{t,x}\in \mathcal{R}^{t,x}$ be the conditional expectation of $\tau$ given $B_t=x$ as defined in (\ref{eqn:conditional_expectation}).
		From Corollary \ref{cor:martingale_attainment} and Proposition \ref{thm:stochastic_embedding}, we have that
		\begin{align*}
			0=&\ \mathbb{E}\Big[G_\tau-G_{t\wedge \tau}\Big]\\
			=&\ \mathbb{E}\Big[\mathbf{1}\{t\leq \tau\}\Big(\psi(B_\tau)-J_\psi(t,B_t)+\int_t^\tau L(s,B_s)ds\Big)\Big]\\
			=&\ \int_O \left(\mathbb{E}^{t,x}\Big[\psi(B_{\tau^{t,x}})-J_\psi(t,x)+\int_t^{\tau^{t,x}}L(s,B_s)ds\Big]\right)\eta(t,x)dx.
		\end{align*}
		We also have that for each $(t,x)$
		\begin{align}\label{eqn:tau_optimality}
			\mathbb{E}^{t,x}\Big[\psi(B_{\tau^{t,x}})-J_\psi(t,x)+\int_t^{\tau^{t,x}}L(s,B_s)ds\Big]\leq 0,
		\end{align}
		so it follows that for $\eta$-a.e.\ $(t,x)$ equality holds in (\ref{eqn:tau_optimality}), or, in other words, the randomized stopping time $\tau^{t,x}$ is optimal for (\ref{eqn:dynamic_programming_2}).	For $\eta$-a.e.\ $(t,x)\in R_o$ we have that
		$$
			\psi(x)=J_\psi(t,x) = \mathbb{E}^{t,x}\Big[ \psi\big(B_{\tau^{t,x}}\big)-\int_t^{\tau^{t,x}} L\big(s,B_s\big)ds\Big].
		$$
		Now we define $\hat{\tau}^{t,x}= \tau^{t,x} -t + s(x)+\epsilon \in \mathcal{R}^{s(x)+\epsilon,x}$ and the dynamic programing principle implies that if $\tau^{t,x}>t$ then
		\begin{align*}
			J_\psi\big(s(x)+\epsilon,x\big) \geq&\ \mathbb{E}^{s(x)+\epsilon,x}\Big[ \psi\big(B_{\hat{\tau}^{t,x}}\big)-\int_{s(x)+\epsilon}^{\hat{\tau}^{t,x}} L\big(r,B_r\big)dr\Big]\\
			>&\ \mathbb{E}^{t,x}\Big[ \psi\big(B_{\tau^{t,x}}\big)-\int_t^{\tau^{t,x}} L\big(s,B_s\big)ds\Big],
		\end{align*}
		where we have used either \ref{itm:compounded}, $\epsilon>0$ and $t>s(x)+\epsilon$, or we have used \ref{itm:discounted}, $\epsilon<0$ and $t<s(x)+\epsilon$. This contradicts that $J_\psi\big(s(x)+\epsilon,x\big)=\psi(x)$, which implies that $\tau^{t,x}=t$ and completes the claim that $\tau\leq \tau_o$. 

		We have shown that $\rho(R_c)=1$ and $\eta(R_o)=0$.
		 Since $R_o$ and $R_c$ differ by a set of Lebesgue measure zero we also have $\eta(R_c)=0$.

		We next note that for $\tau_c$ there is  a corresponding pair $(\tilde{\eta},\tilde{\rho})$ by  Proposition 
 \ref{thm:stochastic_embedding}.  Furthermore, from the definition of $\tau_c$ we have $\tilde{\eta}(R_o)=0$ and $\tilde{\rho}(R_c)=1$.

		Given \ref{itm:compounded}, Lemma \ref{lem:Eulerian_uniqueness} implies that $\eta(R_c)=0$ and $\rho(R_c)=1$ along with the initial condition uniquely determine $\eta$ and $\rho$ thus $(\tilde{\eta},\tilde{\rho})=(\eta,\rho)$ and $\tau_c$ attains the value $\mathcal{P}_0(\mu,\nu)$ with $B_{\tau_c}\sim \nu$, thus $\tau_c$ is optimal and $\tau=\tau_c$ since $\tau\ge\tau_c$.  

		In the case of \ref{itm:discounted} we similarly apply Lemma \ref{lem:Eulerian_uniqueness} but must also consider the case that $s(x)=0$. In this case exactly $\nu(x)$ mass must stop immediately for the target constraint (\ref{eqn:Eulerian_target}) to be satisfied.  Thus $\tau$ equals the stopping time that stops at time zero with probability $\nu(x)/\mu(x)$ and otherwise is the first positive hitting time of $R_c$, i.e\ $\tau=\inf\{t>0;(t,B_t)\in R_c\}$.
	\end{proof}

	\begin{remark}\label{thm:rigidity}
		It is well known that there is a unique forward (resp.\ backward) barrier yielding a hitting time that embeds the final measure into Brownian motion (see e.g., \cite{beiglboeck2017optimal}).  This clearly implies that any Lagrangian satisfying {\normalfont \ref{itm:compounded}} (resp.\ {\normalfont \ref{itm:discounted}}) lead to the same free boundary and hence optimal stopping time.

		In the case of {\normalfont \ref{itm:stationary}}, i.e.\ that $L(t,x)=\overline{L}(x)$, we may easily construct an optimal dual potential by solving
		$$
			-\frac{1}{2}\Delta \psi(x) = -\overline{L}(x),
		$$
		with Dirichlet boundary conditions.  The value function is then time independent, i.e., $J_{{\psi}}(t,x)={\psi}(x)$ for all time, and every admissible stopping time has the same cost, 
 		$$
 			\mathbb{E}\Big[\int_0^\tau \overline{L}(B_t)dt\Big]=\int_O \psi(z)\nu(dz)-\int_O\psi(y)\mu(dy).
 		$$

		Given a Lagrangian ${L}$ satisfying {\normalfont \ref{itm:compounded}} (resp.\ {\normalfont \ref{itm:discounted}}), and the free boundary $s(x)$ of {\normalfont(\ref{eqn:free_boundary})}, one might expect that the optimal dual potential could be chosen to solve 
 		\begin{align}
 			-\frac{1}{2}\Delta{\psi}(x)=-{L}\big(s(x),x\big),\nn
 		\end{align}
 		with Dirichlet boundary conditions. However, this is not true in general. The function
 		$$
 			{J}(t,x) := \mathbb{E}^{t,x}\Big[{\psi}(B_{\tau_c^{t,x}})-\int_t^{\tau_c^{t,x}} {L}(r,B_r)dr\Big],
 		$$
 		where $\tau_c^{t,x}$ is the first hitting time of $R_c$ given $B_t=x$, satisfies $V_{{\psi}}[{J}](t,x)=0$ whenever $t\not=s(x)$. On the other hand, $({\psi},{J})$ may not be admissible because $V_{{\psi}}[{J}](s(x),x)\not\geq 0$ in the viscosity sense, or similarly $({\psi},{J})$ do not satisfy $V_{{\psi}}[{J}]\geq 0$ weakly.
 	\end{remark}

\section{Free boundary Flow, BSDE and Examples} \label{sec:relationships}
	In this section we consider a few additional aspects of our results as well as  examples. 
\subsection{Free boundary equation}
	It is useful to write down the strong form of the coupled free boundary problem that has arisen in our analysis.  
	The optimality criterion in the Eulerian formulation implies that for optimizers $J(t,x)-\psi(x)=0$ everywhere on the support of ${\rho}$ and that $J(t,x)$ solves the backwards parabolic equation
 	$$
 		-\frac{\partial}{\partial t}J(t,x)=\frac{1}{2}\Delta J (t,x)-L(t,x)
 	$$
 	on the support of $\eta$. 

	For $(t,x)\in R_c^c$ we have the following heat equation for the density $\eta$,
	$$
		\frac{\partial}{\partial t} \eta = \frac{1}{2}\Delta \eta(t,x),
	$$
	with Dirichlet boundary conditions along $(s(x),x)$,
	$$
		\eta(s(x),x)=0,
	$$
	and an initial condition $\eta(0,y)=\mu(y)$. The quantities $s(x)$ and $\mu$ uniquely determine $\eta$ as shown in Lemma \ref{lem:Eulerian_uniqueness} (excepting the case that $R_c$ is a backward barrier and contains $(0,x)$ in the support of $\mu$).  

	To determine $s(x)$ we need to use the constraint that the distribution of stopped particles equals $\nu$.  Note that we must have ${\rho}(dt,dx) = \delta_{s(x)}(dt)\nu(dx)$ so assuming sufficient regularity (especially on $s(x)$), the equation \eqref{eqn:Skorokhod_evolution} gives  (when $L$ is increasing) 
	\begin{align}
		\int_{O}w(s(z),z)\nu(z)dz=&\ \int_{O}\int_{\R^+} w(t,x){\rho}(dt,dx) \nn\\
		=&\ \int_{O}\int_0^{s(x)}\Big[\frac{\partial}{\partial t} w(t,x)+\frac{1}{2}\Delta w(t,x)\Big]\eta(t,x)dtdx\nn\\
		&\ +\int_{O}w(0,y)\mu(y)dy\nn\\
		=&\ -\frac{1}{2}\int_{O}\int_0^{s(x)}\nabla \cdot \Big[w(t,x)\nabla\eta(t,x)\Big]dtdx\nn . 
		\end{align}
		Now apply the Stokes' theorem to get
	\begin{align}
		=&\ \int_{O}\frac{1}{2} \nabla s(x) \cdot \Big[w(s(x),x)\nabla\eta(s(x),x)\Big]dx.\nn
	\end{align} 
	Thus when we consider the flux of stopping-particles we have  the   relation
	$$
		\nu(z) = \pm \frac{1}{2} \nabla s(z) \cdot  \nabla \eta\big(s(z),z\big).	$$
	where the $\pm$ is determined by whether $L$ increases or decreases and $s(x)$ is hit from below or above.  This stopping-rate can be seen to be equivalent to the `two-sided' Stefan problem studied in 1D by \cite{mcconnell1991two}.

\subsection{Martingales and BSDE}
	In comparing  the problems $\mathcal{P}_0(\mu,\nu)$ and $\mathcal{D}_0(\mu,\nu)$ we now have attainment of $\mathcal{D}_0(\mu,\nu)$ at a lower semicontinuous super martingale given by
	$$
		G_t = J_\psi(t,B_t)-\int_0^tL(s,B_s)ds.
	$$
	Indeed this martingale is the Snell envelope of the process $Y_t=\psi(B_t)-\int_0^tL(s,B_s)ds$
	as $G_{t\wedge \tau}$ is a martingale for $\tau$ the Snell hitting time, which agrees with the optimal stopping in the case of \ref{itm:compounded} and \ref{itm:discounted} (modulo behavior at $t=0$).

	To compare with the backward stochastic differential equations as studied in \cite{el1997reflected}, \cite{touzi2012optimal}, \cite{pham1997optimal} and others, we assume that $J_\psi\in C_{-\gamma}^{1,2}(\R^+\times \overline{O})$ for 
	$J_\psi$ satisfying  
	  $V_\psi[J_\psi]\geq 0$.
	  Consider $Z_t=J_\psi(t,B_t)$, the random vector $P_t = \nabla J_\psi(t,B_t)$, and the random matrix $Q_t=\nabla^2J_\psi(t,B_t)$. We have $Z_t\geq \psi(B_t)$ and Ito's formula shows that 
	\begin{align}
		P_t =&\ P_\tau +\int_t^\tau \Big[\nabla \frac{\partial}{\partial t}J_\psi(s,B_s)+\frac{1}{2}\nabla\Delta J_\psi(s,B_s)\Big]ds+\int_t^\tau \nabla^2J_\psi(s,B_s) dB_s\nn \\
		=&\ P_\tau+\int_t^\tau \nabla L(s,B_s) ds + \int_t^\tau Q_sdB_s.\nn
	\end{align}
	In other words, $(Z,P,Q)$ solve the backward stochastic differential equation
	\begin{align}\label{eqn:bsde}
		-dP_t=&\ \nabla L(t,B_t)dt+Q_tdB_t, \\
		-dZ_t =&\ -L(t,B_t)dt-P_t\cdot dB_t,
	\end{align}
	with $Z_\tau =\psi(B_\tau)$ and $P_\tau=\nabla \psi(B_\tau)$, along with $Z_t\geq \psi(B_t)$ for $t\leq \tau$.

	\begin{remark}
		This remark leads to a third dual formulation (that seems to require slightly more regularity of $L$):
		$$
		\hbox{Maximize} \quad	\int_{\overline{O}}\psi(z)\nu(dz)-\mathbb{E}\big[Z_0\big]
		$$
		subject to $(P,Q,Z)$ solve {\normalfont(\ref{eqn:bsde})} for $t\leq \tau$ with $Z_\tau =\psi(B_\tau)$, $P_\tau=\nabla \psi(B_\tau)$, and $Z_t\geq \psi(B_t)$ for $t\leq \tau$.
	\end{remark}

	\subsection{Examples}
	The simplest examples of optimal stopping times (although not fitting in our compactly supported setting) occur when $\nu(z)=\tilde{\mu}(t_1,z)$ for $\tilde{\mu}$ the solution to the heat equation with $\tilde{\mu}(0,y)=\mu(y)$ and $t_1$ is a constant. 
	Since $\tau=t_1$ is the hitting-time of a forward-barrier, by the results of Section \ref{sec:hitting_times} (modulo the noncompactness of the supports), it follows that $\tau$ is optimal if $L(t,x)$ is strictly increasing.
	 In the case that $L(t,x)=L(t)$ we can easily compute the optimal cost
	$$
		\mathcal{P}_0(\mu,\nu)=\int_0^{t_1}L(t)dt.
	$$
	To compare with the dual problem, we let $\psi$ solve
	$$
		-\frac{1}{2}\Delta \psi(z)=-L(t_1),
	$$
	which has a bounded below solution $\psi(z)=L(t_1)Q(z)$, where $Q(z)$ is any nonnegative quadratic with $\nabla^2Q(z)=\bs\Sigma$ and $\frac{1}{2}{\rm trace}(\bs\Sigma) =1$ (note that the difference of any two such solutions differs only by a harmonic polynomial).  The value function is solved by
	$$
		J_\psi(t,x)=\psi(x) +f(t)
	$$
	where $f$ solves
	$$
		-f'(t) + L(t) = L(t_1)
	$$
	given by
	$$
		f(t)= L(t_1)(t_1-t)-\int_t^{t_1} L(s)ds.
	$$
	We let $\bs\Sigma_\mu$ denote the covariance of $\mu$ and $\bs\Sigma_\nu$ denote the covariance of $\nu$.  It follows from the heat equation that $\bs\Sigma_\nu-\bs\Sigma_\mu = \frac{1}{2}t_1\mb{1}$.  Then the dual value is
	\begin{align}
		\mathcal{D}_1(\mu,\nu)&=\ \int_{\R^d}\psi(z)\nu(dz)-\int_{\R^d}J_\psi(0,y)\mu(dy)\nn\\
		=&\  L(t_1)\bs\Sigma \cdot (\bs\Sigma_\nu - \bs\Sigma_\mu) -L(t_1)t_1+\int_0^{t_1} L(t)dt= \int_0^{t_1} L(t)dt, \nn
	\end{align}
	demonstrating the duality principle.
	For the case that $L(t)$ decreases, we would first
	subtract the overlap $\mu \wedge \nu$ and let $\mu^+=\mu- \mu\wedge\nu$ and $\nu^+=\nu-\mu\wedge\nu$, 
	and all the mass except $\mu^+$ would stop immediately.  Then the mass of $\mu^+$ would be transported to $\nu^+$ along some free boundary $s(x)$ with $s(\alpha)=0$ where $\alpha$ is any point such that $\mu(\alpha)=\nu(\alpha)$.

	We conclude with some figures illustrating the simulation of general distributions and Lagrangians that will be explored in a further numerical study.

	We have discretized the domain as the integers between $0$ and $39$.  The initial distribution $\mu$ is the uniform measure on the integers between $16$ and $23$, and the target measure $\nu$ is proportional to $|\sin(\frac{\pi x}{13})|$.  We perform the convex optimization of the dual problem to approximate the free boundary yielding the simulation for the increasing (I.) and decreasing (II.) cases, see Figure \ref{fig:inc_dec}.
	\begin{figure}
	\centering
	$
     \  I.\  \begin{array}{cc}
      \includegraphics[height=13.0em]{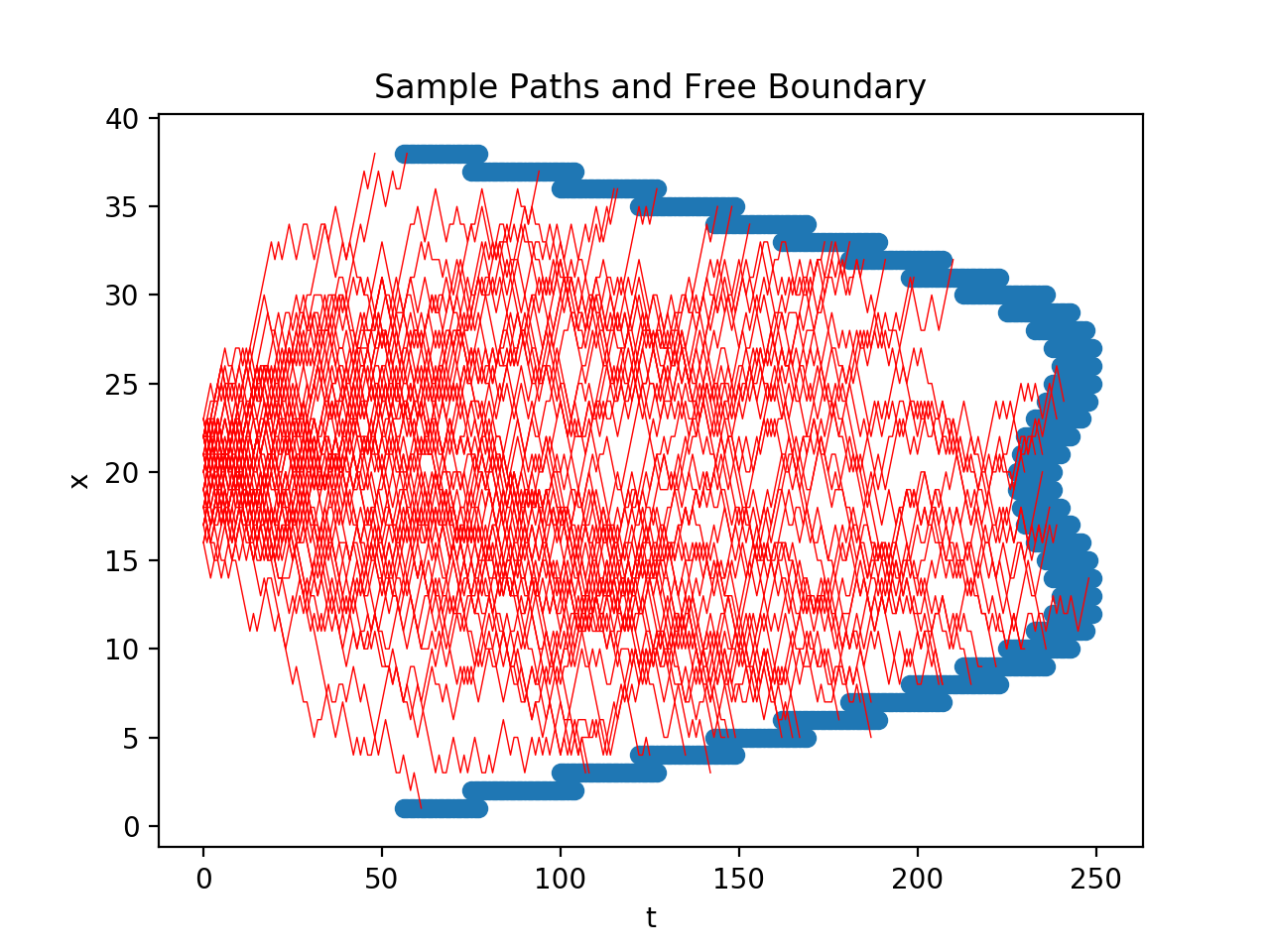} & \includegraphics[height=13.0em]{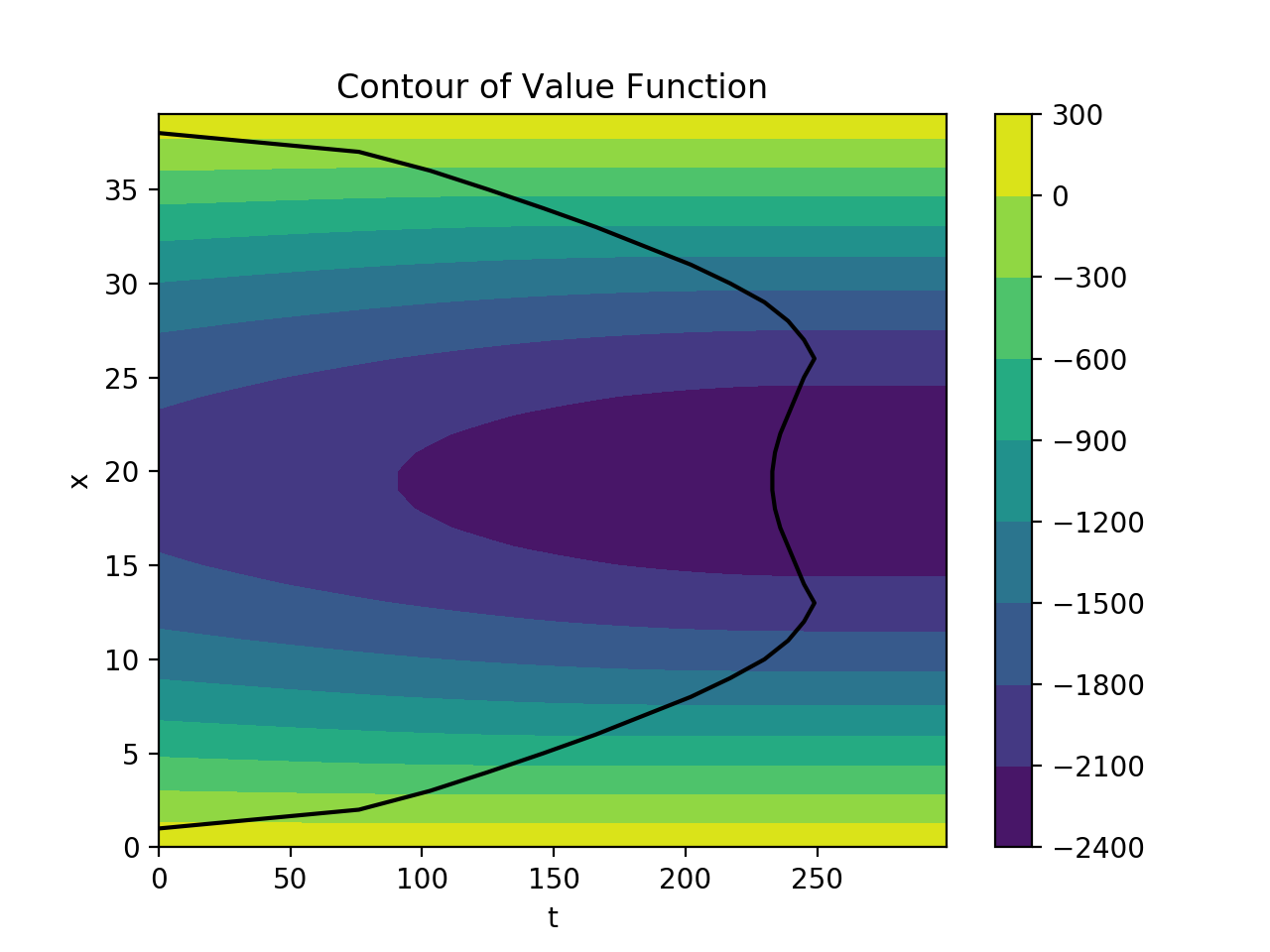}
      \end{array}
    $\\
    \ \\
    $
      II. \begin{array}{cc}
      \includegraphics[height=13.0em]{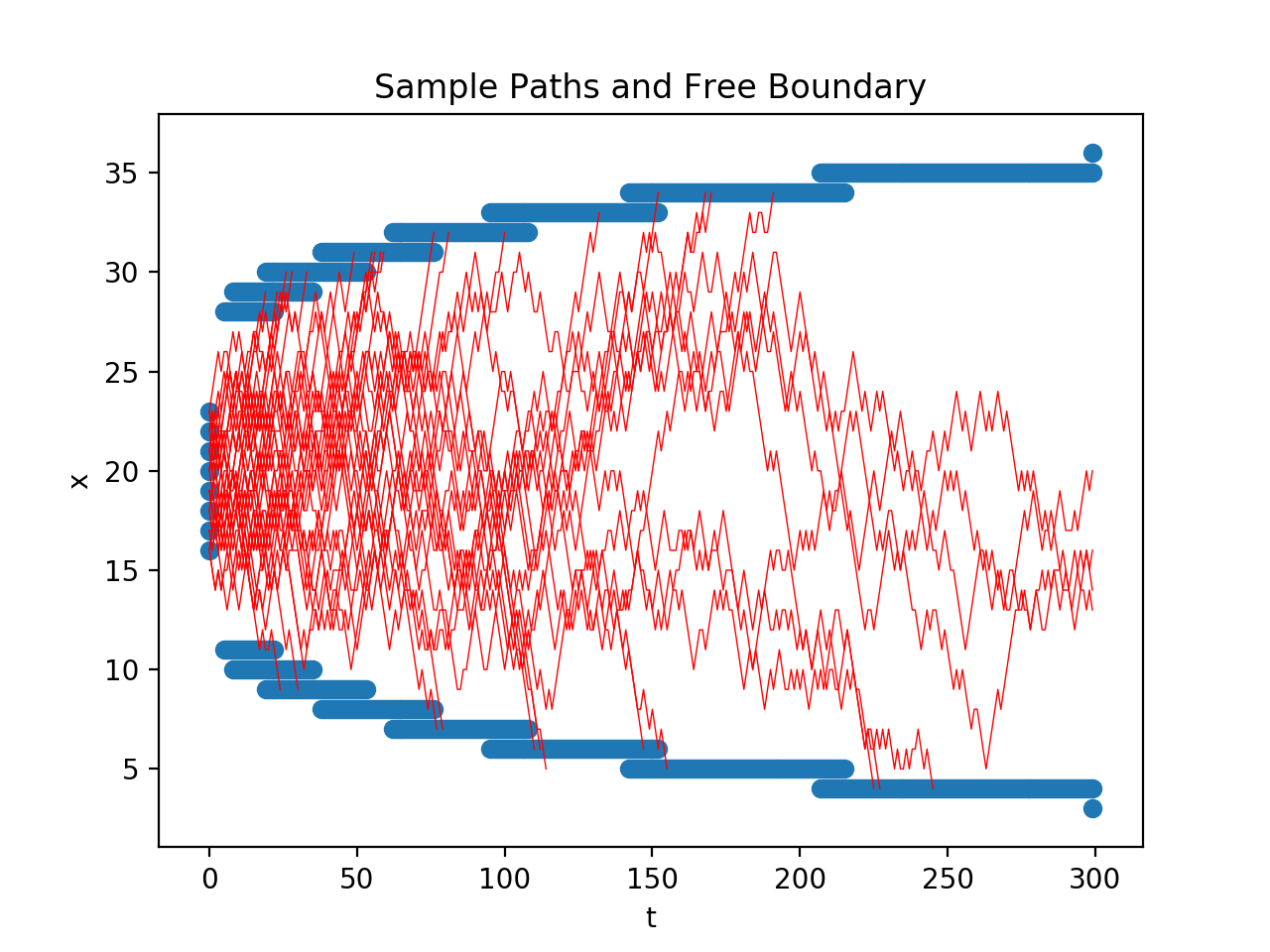} & \includegraphics[height=13.0em]{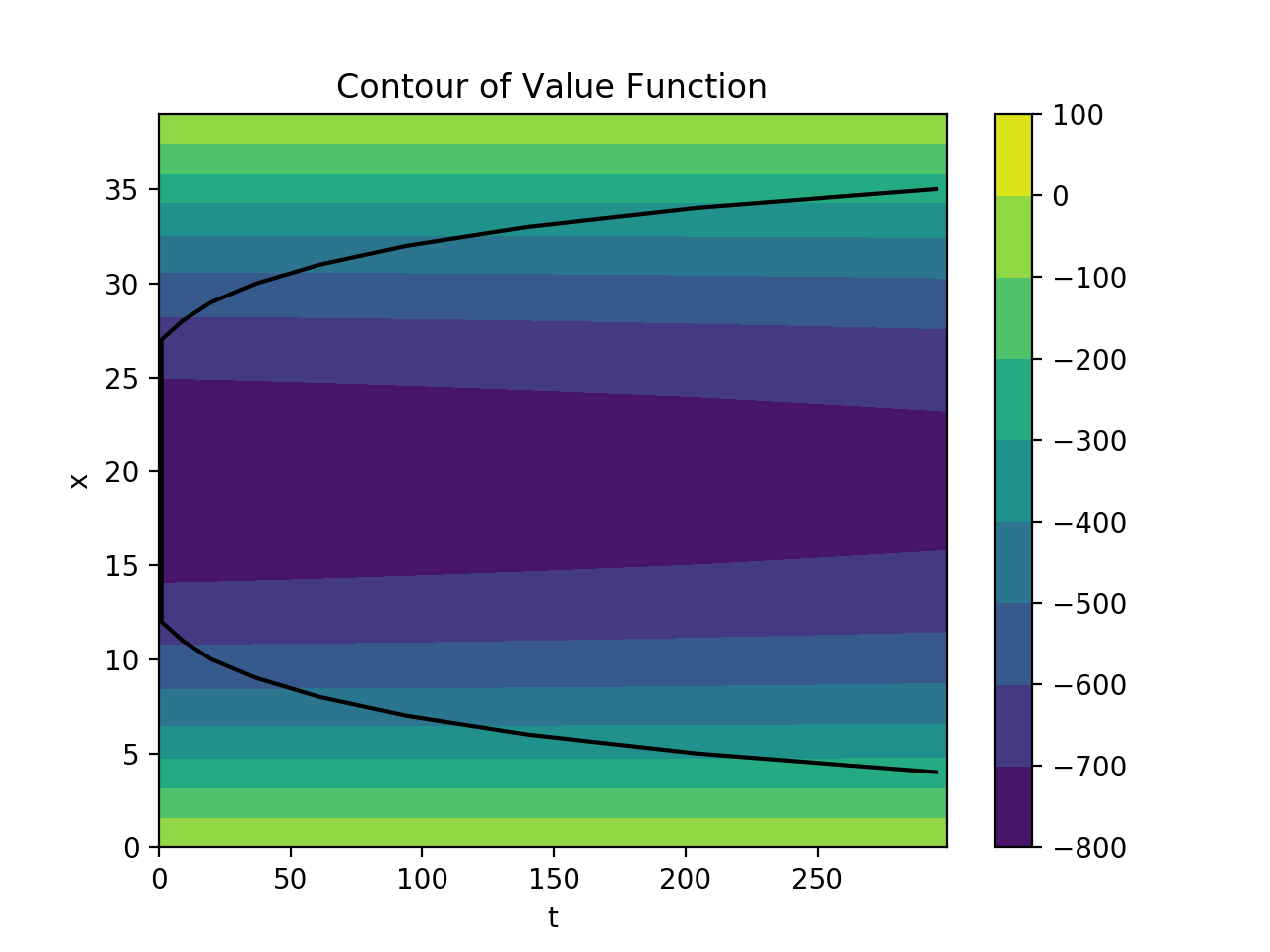}
      \end{array}
    $
    \caption{\label{fig:inc_dec}On the left are simulated trajectories of a random walk approximating Brownian motion and the stopped distribution. On the right we have contours of the value function $J_\psi$ along with the free boundary $s(x)$.}
    \end{figure}
    The solution method does not depend on the property of the Lagrangian thus in Figure 2 we also simulate the Skorokhod embedding for an oscillating Lagrangian proportional to $1-\cos(20\pi t)$.\\
	
	\begin{figure}
	\centering
	$
    \begin{array}{cc}
      \includegraphics[height=15em, trim={0.8cm 0 0.4cm 0}, clip]{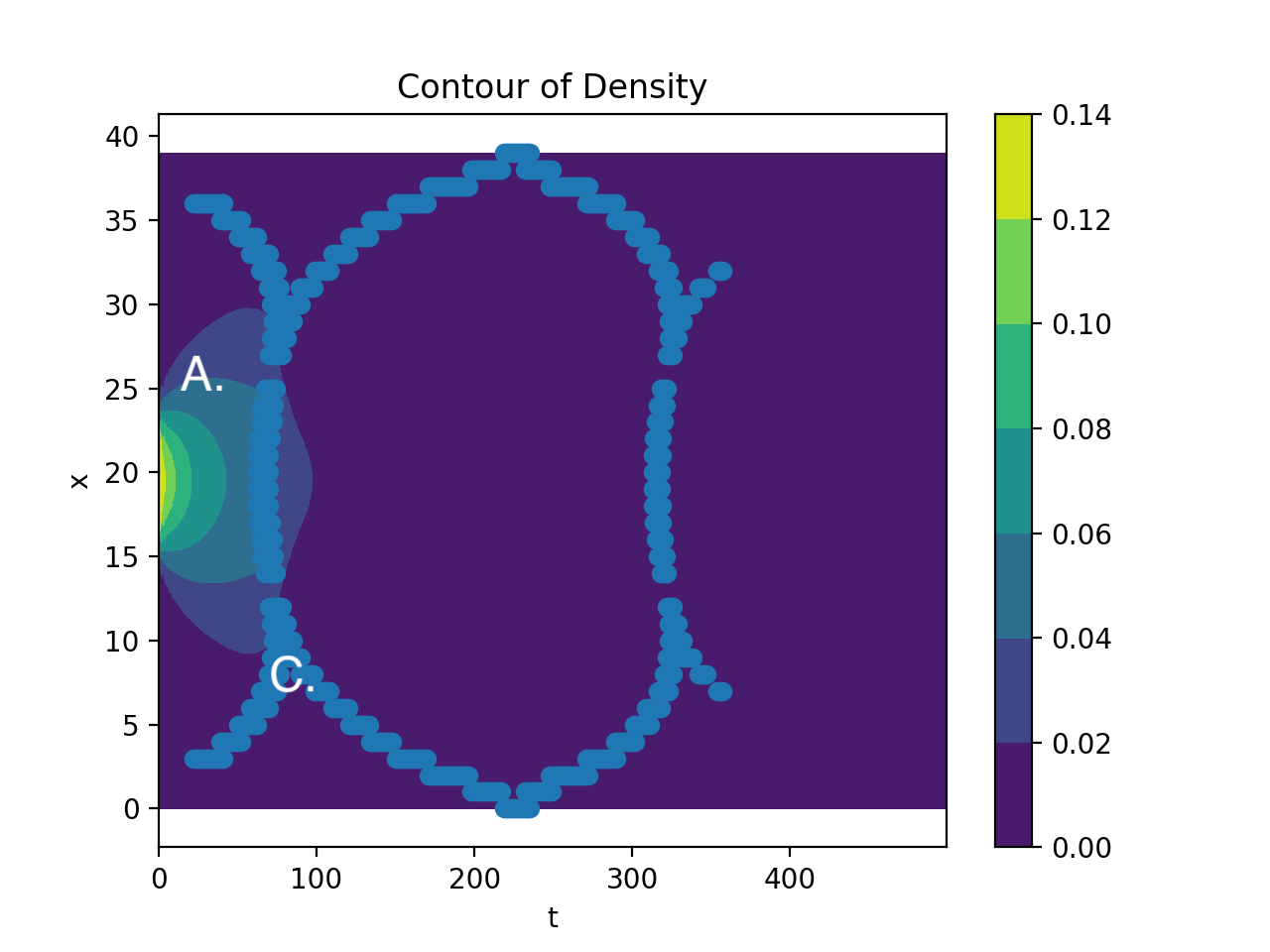} & \hspace{-2.5em} \includegraphics[height=15em, trim={0.8cm 0 1.8cm 0}, clip]{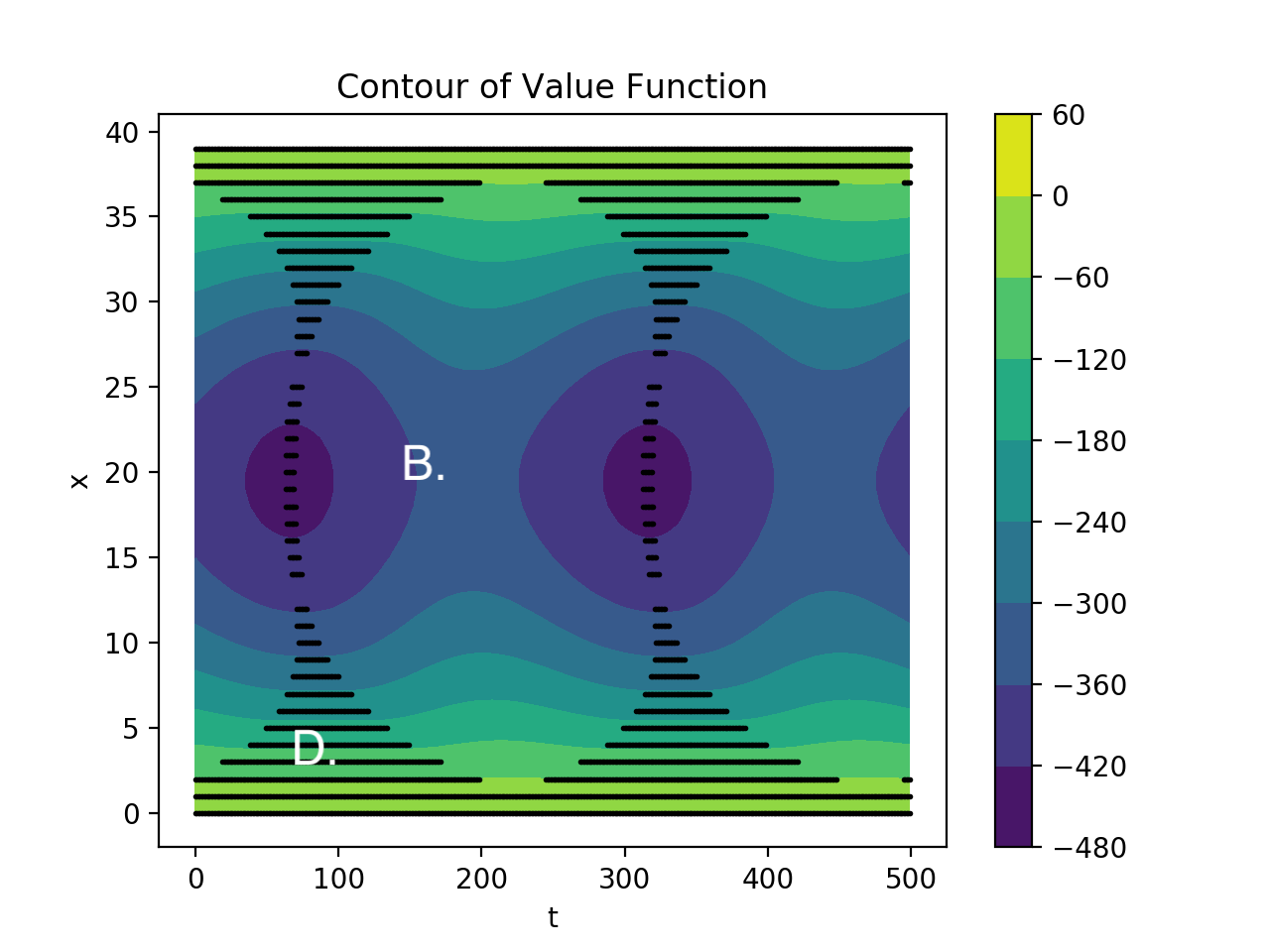}
    \end{array}
    $
    \caption{\label{fig:generic}The contours of the density and points for the stopping distribution are plotted on the left, with the continuation domain $A$ and stopping region $C$. On the right we have contours of the value function $J_\psi$ along with lines marking the coincidence region $D$ and the region where $J_\psi>\psi$, $B$.}
    \end{figure}

\appendix

\section{}\label{S:appendix}
\subsection{Weak duality}
	\begin{theorem}\label{thm:probabilistic_weak_duality}
		The primal and dual problem have the same value $\mathcal{P}_0(\mu,\nu)=\mathcal{D}_0(\mu,\nu)$.
	\end{theorem}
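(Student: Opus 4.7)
The plan is to prove both inequalities, with the weak direction $\mathcal{P}_0\ge \mathcal{D}_0$ coming from optional sampling and the strong direction $\mathcal{D}_0\ge \mathcal{P}_0$ from a minimax argument in the spirit of \cite{beiglboeck2017optimal}.

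For $\mathcal{P}_0 \ge \mathcal{D}_0$, I would fix any $(\psi, G)$ admissible for the dual and any $\tau \in \mathcal{T}_O(\mu,\nu)$. Since $G\in \mathcal{K}^+_{-\gamma}$ is a non-negative continuous supermartingale with $\gamma$-exponential growth and $\tau \le \tau_O$ has finite expectation (as $O$ is bounded), applying optional stopping to the truncation $\tau \wedge n$ and passing $n\to\infty$---using the growth bound on $G$, the bound $L\le D$ from (H1), and dominated convergence---yields $\mathbb{E}[G_\tau] \le \mathbb{E}[G_0]$. The pathwise inequality $G_\tau \ge \psi(B_\tau) - \int_0^\tau L(s,B_s)\,ds$ combined with $B_\tau \sim \nu$ then gives
$$
\int_{\overline{O}}\psi\,d\nu - \mathbb{E}[G_0] \le \mathbb{E}\Big[\int_0^\tau L(s,B_s)\,ds\Big],
$$
and taking $\sup$ over $(\psi, G)$ and $\inf$ over $\tau$ concludes this direction.

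For $\mathcal{D}_0 \ge \mathcal{P}_0$, I would enlarge the feasible set, introducing a Lagrange multiplier $\psi \in C(\overline{O})$ for the target condition. Letting $\widetilde{\mathcal{T}}_O(\mu)$ denote the set of randomized stopping times $\tau \le \tau_O$ with $B_0 \sim \mu$ but with no constraint on $B_\tau$, I would write
$$
\mathcal{P}_0(\mu,\nu) = \inf_{\tau\in\widetilde{\mathcal{T}}_O(\mu)}\sup_{\psi\in C(\overline{O})}\Big\{\mathbb{E}\Big[\int_0^\tau L(s,B_s)\,ds\Big] + \int_{\overline{O}}\psi\,d\nu - \mathbb{E}[\psi(B_\tau)]\Big\}.
$$
The set $\widetilde{\mathcal{T}}_O(\mu)$ is convex and weakly-$\ast$ compact (using test functions in $C_{-\gamma}(\R^+\times \overline{O})$), while the Lagrangian is linear in $\psi$ and weakly-$\ast$ continuous/affine in $\tau$, so Sion's minimax theorem justifies exchanging $\inf$ and $\sup$. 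The inner $\inf_\tau$ then becomes an optimal stopping problem for the process $Y_t = \psi(B_t) - \int_0^t L(s,B_s)\,ds$, whose value is $\mathbb{E}[G_0]$ with $G$ the Snell envelope of $Y$. This $G$ is a continuous supermartingale dominating $Y$ up to $\tau_O$, so $G_t - \psi(B_t) \ge -\int_0^t L(s,B_s)\,ds$; using boundedness of $\psi$ (it may be taken to vanish on $\partial O$ by subharmonicity arguments analogous to those in Proposition \ref{prop:dual_estimates}) together with (H0) and the Poincaré constant, one verifies $G \in \mathcal{K}^+_{-\gamma}$, showing $(\psi, G)$ is dual-admissible and completing $\mathcal{P}_0 \le \mathcal{D}_0$.

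The main obstacle is the minimax step: one must carefully choose a topology on randomized stopping times that simultaneously (i) makes $\widetilde{\mathcal{T}}_O(\mu)$ compact, (ii) makes $\tau \mapsto \mathbb{E}[\int_0^\tau L(s,B_s)\,ds]$ continuous despite the unbounded time horizon---here the $\gamma$-exponential decay in (H0) and the Poincaré inequality controlling the tails of the Brownian motion in $O$ are essential---and (iii) makes $\tau \mapsto \mathbb{E}[\psi(B_\tau)]$ continuous for each $\psi \in C(\overline{O})$. A cleaner backup approach, should the minimax machinery prove cumbersome, is to deduce the equality from the chain $\mathcal{D}_0 = \mathcal{D}_1 = \mathcal{P}_1$ (Proposition \ref{prop:dual_equivalence} and Theorem \ref{thm:Eulerian_weak_duality}) combined with $\mathcal{P}_0 \ge \mathcal{P}_1$ (Proposition \ref{thm:stochastic_embedding}); the only missing piece would then be the reverse inequality $\mathcal{P}_1 \ge \mathcal{P}_0$, obtained by extracting a randomized stopping time from an optimal Eulerian pair $(\eta, \rho)$ via the Radon--Nikodym density $d\rho/d\eta$ after smoothing and passing to the weak limit in $\mathcal{R}$.
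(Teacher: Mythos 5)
Your proof is correct, and it takes a genuinely different route from the paper's. The paper proves $\mathcal{P}_0(\mu,\nu)=\mathcal{D}_0(\mu,\nu)$ by a single application of Fenchel--Rockafellar duality on the process space $\mathcal{A}_{-\gamma}$: it defines abstract convex functionals $\Theta$ and $\Xi$, computes their Legendre transforms (which recover the set of randomized stopping times and the target constraint implicitly), and invokes Theorem 1.9 of \cite{V1}. You instead separate the two constraints structurally: you introduce $\psi\in C(\overline O)$ explicitly as a Lagrange multiplier enforcing $B_\tau\sim\nu$, exchange $\inf_\tau\sup_\psi$ with $\sup_\psi\inf_\tau$ via Sion's minimax theorem (valid since $\widetilde{\mathcal T}_O(\mu)$ is weak-$\ast$ compact and the Lagrangian is affine in both variables, hence both semicontinuity and quasi-convexity/concavity hold automatically), and solve the inner unconstrained stopping problem with the Snell envelope $G_t=J_\psi(t,B_t)-\int_0^t L\,ds$, verifying $G\in\mathcal K^+_{-\gamma}$. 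What this buys you is that the Snell connection --- which the paper only exposes separately in Proposition \ref{prop:dual_equivalence} when proving $\mathcal D_0=\mathcal D_1$ --- appears directly inside the proof of $\mathcal P_0=\mathcal D_0$, making the argument more transparent probabilistically. What the paper's route buys is economy: the Fenchel--Rockafellar computation handles the stopping-time structure and the target constraint in one stroke, with no need to discuss Snell envelopes or verify Sion's hypotheses. Two minor observations: the boundedness assumption \ref{itm:bounded} that you invoke is not actually needed here (the decay in \ref{itm:continuous} already ensures $t\mapsto\int_0^t L(s,B_s)\,ds$ has sub-$\e{\gamma t}$ growth, and boundedness of $J_\psi$ follows from boundedness of $\psi$ alone since $L\ge 0$), and your parenthetical concern about needing $\psi$ to vanish on $\partial O$ is unnecessary for admissibility of $(\psi,G)$. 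Finally, be wary of your ``backup'' approach: deducing $\mathcal P_0\le\mathcal D_0$ from the chain $\mathcal P_0\le\mathcal P_1=\mathcal D_1=\mathcal D_0$ requires $\mathcal P_1\ge\mathcal P_0$, but the paper only obtains that inequality by closing the full loop through this very theorem. A self-contained disintegration of an optimal Eulerian pair $(\eta,\rho)$ into a randomized stopping time (via a hazard rate $d\rho/d\eta$) is plausible but nontrivial, and your one-line sketch of it leaves real measurability and uniqueness issues unresolved; your main minimax argument is cleaner and does not need it.
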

	\begin{proof}
		We exploit a duality between progressively measurable continuous supermartingales with exponential growth, $G\in \mathcal{K}^+_{-\gamma}$, and randomized stopping times with exponential decay $\tau\sim \alpha \in \mathcal{R}$ with $\E[\e{\gamma \tau}]<\infty$.  We define $\mathcal{A}_{-\gamma}\subset C(\R^+\times\Omega)$ where $\Omega$ has the topology of uniform convergence, $\sup_{(t,\omega)\in \R^+\times \Omega}\e{-\gamma t}A_t<\infty$, and $\e{-\gamma t}A_t\rightarrow 0$ uniformly on compact sets. Furthermore, we assume $A\in  \mathcal{A}_{-\gamma}$ is progressively measurable, i.e.\ for $s\in [0,t]$, $s\mapsto A$ is $\mathcal{F}_t$ measurable. We consider elements of the dual space $\mathcal{A}_{-\gamma}^*$ with $\mathbb{P}$ as their $\Omega$ marginal, what satisfy that the disintegration $\beta:\Omega\rightarrow \mathcal{M}(\R^+)$ has $\beta([0,t])$ is $\mathcal{F}_t$ measurable for any $t\geq 0$.  
		The randomized stopping times, $\alpha\in \mathcal{R}$, with exponential decay are such elements that also satisfy $\alpha\geq 0$ and $\alpha(\R^+)=1$.

		The dual relationship is given by, for $A\in \mathcal{A}_{-\gamma}$ and $\tau\sim \alpha\in \mathcal{R}$ with $\E[\e{\gamma \tau}]<\infty$,
		$$
			\mathbb{E}\big[A_\tau\big]:=\mathbb{E} \Big[ \int_{\R^+} A_t\alpha(dt)\Big].
		$$

		We define $\Theta:\mathcal{A}_{-\gamma}\rightarrow \R \cup \{+\infty\}$ as
		$$
			\Theta(f)=\begin{cases} 0 &\ A_t\geq -\int_0^tL(s,B_s)ds,\ \forall\ (t,\omega),\\ +\infty &\ {\rm otherwise}
			\end{cases}
		$$
		so that the Legendre transform is, for $\tau\sim \alpha\in \mathcal{R}$ with $\E[\e{\gamma \tau}]<\infty$,
		$$
			\Theta^*(-\alpha)=\sup_{A\in \mathcal{A}_{-\gamma}}\Big\{-\mathbb{E}\big[A_\tau\big]-\Theta(A)\Big\}=  \mathbb{E}\Big[\int_0^\tau L(t,B_t\big)dt\Big].
		$$
		For general $\beta\in \mathcal{A}_{-\gamma}^*$, the Legendre transform is $+\infty$ if $\beta\not\geq 0$, and is the pairing of $\beta$ and $\int_0^tL(s,B_s)ds$ otherwise. 

		We define $\Xi$ to be
		\begin{align}
			\Xi(A) = \inf_{\psi\in C(\overline{O}),G\in \mathcal{K}^+_{-\gamma}}\Big\{&-\int_{\overline{O}}\psi(z)\nu(dz)+\mathbb{E}\big[G_0\big];\ G_t-\psi(B_t)\geq A_t\Big\}.\nn
		\end{align}
		Then we calculate the Legendre transform for $\beta\in \mathcal{A}_{-\gamma}^*$:
		\begin{align}
			\Xi^*(\beta) =&\ \sup_{A\in \mathcal{A}_{-\gamma}} \Big\{\int_{\Omega}\int_{\R^+}A_t(\omega)\beta(dt,d\omega)-\Xi(A)\Big\}\nn\\
			=&\ \sup_{\psi\in C(\overline{O}),G\in \mathcal{K}^+_{-\gamma}} \Big\{\int_{\Omega}\int_{\R^+}\left[G_t(\omega)-\psi\big(B_t(\omega)\big)\right]\beta(dt,d\omega)-\mathbb{E}[G_0]+\int_{\R^d}\psi(z)\nu(dz)\Big\}.\nn
		\end{align}
		For $\beta\in \mathcal{A}_{-\gamma}^*$ with $\beta\geq 0$, then
		$$
			\sup_{G\in \mathcal{K}_{-\gamma}^+}\Big\{\int_{\Omega}\int_{\R^+} G_t(\omega)\beta(dt,d\omega)-\E[G_0]\Big\}=0
		$$
		if and only if $\beta\in \mathcal{R}$, and it is $+\infty$ otherwise. Since and $\tilde{G}\in C(\Omega)$ defines $G\in \mathcal{K}_{-\gamma}^+$ by $G_t(\omega):=\tilde G(\omega)$, we have that the marginal on $\Omega$ of $\beta$ is $\mathbb{P}$, and the growth condition follows easily from multiplying by $\e{-\gamma t}$.  The supermartingale property and the optional stopping theorem then implies that
		$$
			\int_{\Omega}\int_{\R^+}G_t(\omega)\beta(dt,d\omega) =\E \big[G_\tau\big]\leq \E\big[G_0\big].
		$$

		By definition of $\Xi$ we have that $\mathcal{D}_1(\mu,\nu)=-\Xi(A^L)$ where $A_t^L=-\int_0^tL(s,B_s)ds$.
		Similarly, the primal problem is
		$$
			\sup_{\tau\in \mathcal{R}_{\gamma}} \big\{-\Theta^*(-\alpha)-\Xi^*(\alpha)\big\}.
		$$
		Fenchel-Rockafellar applies after showing there is $A\in \mathcal{A}_{-\gamma}$ with $\Xi[A],\Theta[A]<\infty$ and $\Theta$ continuous at $A$, but this obviously holds, for example with $A^L+\e{\gamma t}$.
	\end{proof}

\subsection{Variational inequalities} 
\begin{definition}\label{def:viscosity_solution}
	Suppose $\psi$ and $L$ are continuous, and $J$ is lower semicontinuous.  Then we say that $V_\psi[J]\geq 0$ {\em in the sense of viscosity}  (viscosity supersolution)  if whenever a smooth function $w$ touches $J$ from below at $(t,x)$, i.e.\ $w(t,x)=J(t,x)$ and $w(s,y)\leq J(s,y)$ for all $(s,y)\in \R^+\times \overline{O}$, then  $V_\psi[w](t,x)\geq 0$.

	Similarly, if $J$ is upper semicontinuous we say that $V_\psi[J]\leq 0$ {\em in the sense of viscosity}  (viscosity subsolution)  if whenever $w$ touches $J$ from above at $(t,x)$ then  $V_\psi[w](t,x)\leq 0$.

	For $J$ continuous, we say that $V_\psi[J]=0$ {\em in the sense of viscosity}, if $J$ is both a subsolution and supersolution. 
\end{definition}

\begin{proposition}\label{prop:lower_semicontinuity}
	Suppose $\psi\in H_0^1(O)$, $\psi\leq 0$ and $-\Delta \psi$ is bounded below.  Then there is a lower semicontinuous representative of $\psi$.  

	Similarly, suppose $J \in \mathcal{X}$, $J\leq 0$,
	and $-\frac{\partial}{\partial t}J-\frac{1}{2}\Delta J$ is bounded below, then there is a lower semicontinuous representative of $J$. 
\end{proposition}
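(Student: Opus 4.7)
The plan is to reduce both statements to classical potential theory by an additive shift that converts each one-sided distributional bound into non-negativity of the corresponding operator applied to the shifted function.

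For $\psi$: pick $C \ge 0$ with $-\Delta \psi \ge -C$ in the distributional sense and set $\phi := \psi - \tfrac{C}{2d}|x|^2$, so that $-\Delta \phi \ge 0$ in distributions. Since $\phi \in H_0^1(O) \subset L^1_{\mathrm{loc}}(O)$, $\phi$ is a superharmonic distribution, and standard potential theory provides a canonical lower semicontinuous representative $\bar\phi$ given by
\begin{equation*}
\bar\phi(x) \;:=\; \lim_{r \to 0^+} \frac{1}{|B(x,r)|}\int_{B(x,r)} \phi(y)\, dy,
\end{equation*}
the limit existing in $(-\infty,+\infty]$ because the ball-averages are non-increasing in $r$. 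This monotonicity is the mean-value inequality for superharmonic distributions, obtained by testing $-\Delta \phi \ge 0$ against smooth radial approximations of $\mathbf{1}_{B(x,r)}$. Writing $\bar\phi$ as a supremum over $r$ of the $x$-continuous averages gives lower semicontinuity, and Lebesgue differentiation gives $\bar\phi = \phi$ almost everywhere. Then $\bar\psi := \bar\phi + \tfrac{C}{2d}|x|^2$ is the desired representative of $\psi$.

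For $J$: pick $D \ge 0$ with $-\partial_t J - \tfrac{1}{2}\Delta J \ge -D$ in distributions on $\R^+ \times O$, and set $\tilde J(t,x) := J(t,x) - Dt$, so that $-\partial_t \tilde J - \tfrac{1}{2}\Delta \tilde J \ge 0$ in distributions. Since the claim is pointwise and local in time, we localize to an arbitrary finite strip $[0,T]\times O$; the time reversal $s := T-t$ then turns $\tilde J$ into a super-caloric distribution for the forward heat equation. The parabolic analogue of the elliptic construction above --- carried out with averages against parabolic heat kernels, or equivalently with monotone space-time mollifications by a non-negative compactly supported kernel --- produces a lower semicontinuous representative $\bar{\tilde J}$ equal to $\tilde J$ a.e. Adding back $Dt$ yields the required representative of $J$.

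The main obstacle, relative to textbook material, is the parabolic step. The elliptic result is completely classical, but the super-caloric analogue for distributional supersolutions is less uniformly documented. The cleanest self-contained route is to mollify $\tilde J$ in space-time by a non-negative compactly supported kernel, observe that the mollifications are smooth classical supersolutions of the backward heat equation (since convolution against a non-negative kernel preserves the distributional inequality), and exploit the resulting parabolic mean-value inequality to check that the pointwise limit as the mollification parameter shrinks exists and is lower semicontinuous as a decreasing limit of continuous functions, giving $\bar{\tilde J}$.
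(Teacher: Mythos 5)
Your argument is essentially the same one the paper has in mind---the paper's proof is only a three-sentence pointer to mollification, the mean-value inequality, and classical references (Lewy--Stampacchia, Ziemer's parabolic version, and Silvestre's notes), and you are fleshing out exactly that route. The additive shift ($\psi \mapsto \psi - \tfrac{C}{2d}|x|^2$ and $J \mapsto J - Dt$, with time reversal) to reduce a one-sided bound to a genuine supersolution is a clean way to organize it. One small caution on the parabolic step: the phrase ``or equivalently with monotone space-time mollifications by a non-negative compactly supported kernel'' overstates the elliptic analogy. An arbitrary non-negative compactly supported kernel does not give a monotone family for super-caloric distributions; what works is averaging against kernels adapted to the parabolic structure (heat-ball / Pini--Watson mean values, or kernels built from them), which is the option you also list. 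Likewise the limit should be described as the \emph{supremum} of the continuous mollifications as the parameter shrinks (they are non-decreasing), which is what makes the representative lower semicontinuous---you say this correctly in the elliptic case, but the parabolic paragraph says ``decreasing,'' which reads as giving upper rather than lower semicontinuity. With those two clarifications the argument matches the paper's intended proof.
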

\begin{proof}
	Essentially this result follows from approximating from below by convolution with smooth cut-off functions and the mean value property of supersolutions.
	The limit of these approximations is lower semicontinuous. A classic resource is \cite{lewy1970smoothness} and \cite{ziemer1988regularity} does the parabolic version.  See also \cite{Sylvestre2015}. 
\end{proof}

\begin{proposition}\label{prop:Perron}
	Suppose that $L\in C_{-\gamma}(\R^+\times \overline{O})$, $\psi\in C(\overline{O})$, then there is a unique viscosity solution $J_\psi\in C_{-\gamma}(\R^+\times \overline{O})$ to $V_\psi[J]=0$ given by
	$$
		J_\psi(t,x) = \inf\{\phi(t,x)\in C_{-\gamma}^{1,2}(\R^+\times\overline{O});\ V_\psi[\phi]\geq 0\}.
	$$
\end{proposition}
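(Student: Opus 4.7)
The plan is to apply Perron's method for the parabolic quasi-variational inequality $V_\psi[J]=0$. The argument has the usual skeleton: nonempty class of supersolutions, a comparison principle that yields boundedness and uniqueness, and the ``bump'' construction that forces the infimum to be a subsolution.

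First, I would check that the set over which the infimum is taken is nonempty. Since $\psi \in C(\overline{O})$ and $\overline{O}$ is compact (under \ref{itm:convex_support}), $M := \|\psi\|_{C(\overline{O})} < \infty$. The constant $\phi_0(t,x) \equiv M$ lies in $C^{1,2}_{-\gamma}(\R^+\times\overline{O})$ (constants decay faster than $e^{\gamma t}$) and satisfies $\phi_0 - \psi \geq 0$ together with $-\partial_t \phi_0 - \tfrac{1}{2}\Delta\phi_0 + L = L \geq 0$, so $V_\psi[\phi_0]\geq 0$. This immediately gives $J_\psi \leq M$. Symmetrically, the constant $-M$ is a smooth subsolution ($V_\psi[-M]\leq 0$, since $-M-\psi \leq 0$).

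Second, I would establish a comparison principle adapted to $C_{-\gamma}$: if $\underline{u}$ is a bounded upper semicontinuous viscosity subsolution and $\overline{v}$ is a bounded lower semicontinuous viscosity supersolution of $V_\psi[\cdot]=0$ (both in $C_{-\gamma}$), then $\underline{u}\leq \overline{v}$. The standard doubling-of-variables argument goes through because the obstacle part $J-\psi$ is degenerate-monotone in $J$ and the parabolic part is uniformly parabolic; the growth constraint $e^{-\gamma t}|\cdot|\to 0$ combined with $\gamma<\lambda$ (the Poincar\'e/spectral constant of $O$) allows a penalization by $\varepsilon e^{\gamma' t}\phi_1(x)$ with $\gamma<\gamma'<\lambda$ and $\phi_1$ the first Dirichlet eigenfunction (or, simpler, by $\varepsilon e^{\gamma' t}$ since $\overline{O}$ is bounded) to localize the maximum of $\underline{u}-\overline{v}$ in time. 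Applying comparison to $\pm M$ and $J_\psi$ gives $|J_\psi|\leq M$, and combined with $J_\psi\in C_{-\gamma}$ (once continuity is proven) this is how the growth bound is secured.

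Third, I would run Perron's method itself. Define $J_\psi^* := (J_\psi)^*$ (upper semicontinuous envelope) and $J_{\psi,*} := (J_\psi)_*$ (lower semicontinuous envelope). A standard stability argument shows that $J_{\psi,*}$ is a viscosity supersolution, because if a smooth $w$ touches $J_{\psi,*}$ from below at $(t_0,x_0)$ then for each $\varepsilon>0$ there is a smooth supersolution $\phi^\varepsilon$ with $\phi^\varepsilon(t_0,x_0)\leq J_{\psi,*}(t_0,x_0)+\varepsilon$ and one passes to the limit in $V_\psi[\phi^\varepsilon]\geq 0$ at the contact point. The key ``bump'' step shows $J_\psi^*$ is a subsolution: if it failed at $(t_0,x_0)\in \R^+\times O$, a smooth test function $w$ touching $J_\psi^*$ from above would satisfy $V_\psi[w](t_0,x_0)>0$, i.e., both $w(t_0,x_0)-\psi(x_0)>0$ and $-\partial_t w-\tfrac{1}{2}\Delta w +L>0$ near $(t_0,x_0)$; replacing $w$ by $w-\delta + \delta|x-x_0|^2/r^2 + \delta|t-t_0|^2/r^2$ on a small cylinder and gluing with any existing smooth supersolution via taking a minimum (regularized) would produce a new smooth supersolution strictly below $J_\psi$ at $(t_0,x_0)$, contradicting the definition of $J_\psi$ as an infimum. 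Boundary and initial points are handled by the $\phi_0\equiv M$ envelope.

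Finally, comparison applied to $J_\psi^*$ (subsolution) and $J_{\psi,*}$ (supersolution) gives $J_\psi^*\leq J_{\psi,*}$; the reverse inequality is trivial, so $J_\psi$ is continuous, hence a viscosity solution, and being bounded it lies in $C_{-\gamma}$. Uniqueness of $J_\psi\in C_{-\gamma}$ is then just one more application of comparison. The step I expect to be the main obstacle is the comparison principle on the unbounded half-line $\R^+$ with the $C_{-\gamma}$ class: one must carefully choose a penalization that is compatible with both the obstacle and the parabolic operator and that exploits $\gamma<\lambda$ to control behavior as $t\to\infty$, so that the usual viscosity comparison machinery (doubling of variables, Ishii's lemma) applies on a compact region where the supremum of $\underline{u}-\overline{v}$ is attained.
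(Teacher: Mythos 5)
Your route is a self-contained Perron's method, whereas the paper's proof is essentially a pair of citations: existence and uniqueness of a continuous viscosity solution to the reflected/obstacle problem is taken from El Karoui et al., and the fact that this solution is the infimum of \emph{smooth} supersolutions is obtained by regularization (Krylov-type ``shaking the coefficients'', see the references to Krylov, Jakobsen, and Barles). The overall skeleton of your argument is sound and the constant-barrier and comparison-principle observations are correct, but there is a genuine gap exactly at the point the paper delegates to those references.

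The gap is in the bump step. You construct, near a hypothetical point of failure of the subsolution property, a perturbed test function $w - \delta + \delta(|x-x_0|^2 + |t-t_0|^2)/r^2$, and then ``glue with any existing smooth supersolution via taking a minimum (regularized)''. The minimum of two $C^{1,2}$ supersolutions is a viscosity supersolution, but it is not $C^{1,2}$, and the parenthetical ``(regularized)'' is precisely where the argument stalls. Mollifying a supersolution of $-\partial_t J - \tfrac12\Delta J + L(t,x) \ge 0$ produces a supersolution of the equation with $L$ replaced by its mollification, so one only gets $-\partial_t J_\varepsilon - \tfrac12 \Delta J_\varepsilon + L \ge -\omega(\varepsilon)$ for some modulus $\omega$; similarly the obstacle constraint $J_\varepsilon \ge \psi$ can fail by $o(1)$. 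Turning this two-sided error into a genuine smooth supersolution (e.g.\ by shaking coefficients in $(t,x)$ and adding a small strict supersolution to absorb the error) is a real argument and is the entire content of the second sentence of the paper's proof. Note also that the proposition asserts equality with the infimum over the \emph{smooth} class $C^{1,2}_{-\gamma}$, not merely over all viscosity supersolutions, so even if you ran Perron over the larger class of semicontinuous supersolutions (where the bump/gluing step is standard), you would still need exactly this regularization to descend to the smooth class. You should either incorporate a regularization lemma of this type or, as the paper does, cite it.

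A minor secondary point: in your comparison sketch, the penalizer $\varepsilon e^{\gamma' t}$ satisfies $-\partial_t(\varepsilon e^{\gamma' t}) - \tfrac12\Delta(\varepsilon e^{\gamma' t}) = -\gamma'\varepsilon e^{\gamma' t} < 0$, so it is a strict \emph{sub}solution of the backward parabolic part, not a supersolution; the eigenfunction penalizer $\varepsilon e^{\gamma' t}\phi_1(x)$ with $\gamma' < \lambda$ works because $(\lambda - \gamma')\varepsilon e^{\gamma' t}\phi_1 > 0$, but it vanishes on $\partial O$, so the localization-in-time step needs $\phi_1$ and cannot fall back on the ``simpler'' constant-in-$x$ version you mention. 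This is easily fixable, but worth getting the sign right.
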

\begin{proof}
	Existence of a continuous solution can be found in \cite{el1997reflected}.  The proof that such a solution may be approximated above by a smooth supersolution can be done by regularization, see for instance \cite{krylov2000rate}, \cite{jakobsen2003rate}, \cite{barles2007error}. 
\end{proof}

\begin{definition}\label{def:variational_inequality}
	Given $L\in C_{-\gamma}(\R^+\times \overline{O})$, $\psi\in H_0^1(O)$ and $J\in \mathcal{X}$ we say that $V_\psi[J]\geq 0$ weakly, if $J(t,x)\geq \psi(x)$ for almost every $(t,x)$ and 
	\begin{align}\label{eqn:weak-variational}
		\int_{\R^+}\int_{\overline{O}} \Big[-w(t,x)\frac{\partial}{\partial t}J(t,x)+\frac{1}{2}\nabla w(t,x)\cdot \nabla J(t,x)+w(t,x)L(t,x)\Big]dxdt\geq 0
	\end{align}
	for any $w\in L_\gamma(\R^+;H_0^1(O))$ with $w\geq 0$.

	We say that $V_\psi[J]=0$ weakly if  $V_\psi[J]\ge 0$ in a weak sense and the  inequality \eqref{eqn:weak-variational}  
	 holds 
	whenever $J-\psi+w\geq 0$.

	We say that $J\in \mathcal{X}$ is the minimal weak solution if $V_\psi[J]=0$ weakly and for every weak solution to $V_\psi[\tilde{J}]=0$, $J(t,x)\leq \tilde{J}(t,x)$ almost everywhere.
\end{definition}

Our definition of weak solution and minimal weak solution is equivalent to the definitions in \cite{bensoussan2011applications} (Chapter 3, Section 2) where they have used negative $\psi$ and $J$ and thus consider maximal weak solutions.  Note also that they use test functions that are equivalent to $v(t,x)=-J(t,x)-w(t,x)$, i.e.\ satisfy $v(t,x)\leq -\psi(x)$.

\begin{proposition}\label{prop:viscosity_variational_equivalence}
	Suppose {\normalfont \ref{itm:continuous}} and $\psi\in LSC(\overline{O})\cap H_0^1(O)$ with $\psi\leq 0$.  Then there is a unique function $J_\psi\in \mathcal{X}$, $J_\psi\leq 0$, that is the minimal weak solution to $V_\psi[J_\psi]=0$, and there is a constant $C$ such that 
	$$
		\|J_\psi\|_{\mathcal{X}}\leq C\|\psi\|_{H_0^1}.
	$$

	Furthermore, $J_\psi$ is the value function for the optimal stopping problem with terminal reward $\psi$ and satisfies
	$$
		J_\psi(t,x) = \sup_{\tau\in \mathcal{R}^{t,x}}\Big\{\mathbb{E}^{t,x}\Big[\psi(B_\tau)-\int_t^\tau L(s,B_s)ds\Big]\Big\}
	$$
\end{proposition}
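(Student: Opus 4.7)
The plan is to obtain the minimal weak solution via the standard penalty regularization, derive the energy estimate from testing against an exponentially weighted obstacle difference, and then pass from continuous to lower semicontinuous $\psi$ by monotone approximation in order to identify $J_\psi$ with the value function.

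First I would approximate the variational inequality by the penalized parabolic equations
\begin{equation*}
	-\partial_t J_\epsilon - \tfrac{1}{2}\Delta J_\epsilon + L + \tfrac{1}{\epsilon}(\psi - J_\epsilon)^+ = 0
\end{equation*}
on $\R^+ \times O$ with Dirichlet boundary conditions on $\partial O$ and the weighted decay at infinity enforced through $\mathcal{X}$. Standard parabolic theory on the weighted Sobolev space yields a unique $J_\epsilon \in \mathcal{X}$ for each $\epsilon > 0$. Because $L \geq 0$ and $\psi \leq 0$, the function $J = 0$ is a supersolution of the penalized equation, so the maximum principle gives $J_\epsilon \leq 0$; likewise a comparison argument shows $\epsilon \mapsto J_\epsilon$ is monotone nondecreasing as $\epsilon \downarrow 0$.

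For the a priori bound, I would test the penalized equation with $e^{2\gamma t}(J_\epsilon - \psi)$. The penalty term contributes nonpositively, integration by parts in $t$ yields no boundary term (by the $\mathcal{X}$ decay at $\infty$ and $J_\epsilon(0,\cdot) - \psi(\cdot) \in L^2$), and the spatial integration by parts gives $\tfrac{1}{2}\|\nabla(J_\epsilon - \psi)\|_{L^2}^2 + \tfrac{1}{2}\int \nabla\psi\cdot\nabla(J_\epsilon - \psi)$. Poincaré's inequality with constant $\lambda > \gamma$ absorbs the $-\gamma\|J_\epsilon - \psi\|_{L^2}^2$ coming from differentiating $e^{2\gamma t}$, while Cauchy--Schwarz controls the $\nabla \psi$ and $L$ terms. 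Dropping the penalty contribution gives a uniform bound $\|J_\epsilon - \psi\|_{L^2_\gamma(\R^+;H^1_0)} \leq C(\|\psi\|_{H^1_0} + \|L\|_{C_{-\gamma}})$, and the time-derivative bound in $L^2_\gamma(\R^+;H^{-1})$ follows from the equation itself. Passing $\epsilon \to 0$ along the monotone sequence yields the desired $J_\psi \in \mathcal{X}$ solving $V_\psi[J_\psi] = 0$ weakly; that it is the minimal weak solution follows because any weak supersolution dominates every $J_\epsilon$ (by the comparison principle for the penalized equation, since the penalty term forces supersolutions above $\psi$).

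To identify $J_\psi$ with the value function, I would first treat the case $\psi \in C(\overline{O}) \cap H^1_0(O)$: Proposition~\ref{prop:Perron} produces the continuous viscosity solution, which coincides with the minimal weak solution constructed above because both admit the characterization as infima over smooth supersolutions and the comparison principle pins them down uniquely. Classical verification (optional stopping on the supermartingale $J_\psi(t,B_t) - \int_0^t L\, ds$ and dynamic programming at the hitting time of $\{J_\psi = \psi\}$) then gives the stochastic representation. For $\psi \in LSC(\overline{O}) \cap H^1_0(O)$ with $\psi \leq 0$, I approximate from below by continuous $\psi_n \uparrow \psi$; monotonicity of the construction in $\psi$ yields $J_{\psi_n} \uparrow \tilde J$ in $\mathcal{X}$, where $\tilde J$ is a weak solution of $V_\psi[\tilde J] = 0$ (stability of weak solutions under monotone limits) and indeed the minimal one. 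On the stochastic side, Fatou/monotone convergence applied inside the supremum over $\mathcal{R}^{t,x}$ shows the value functions also converge monotonically to the value function for $\psi$, and the two limits coincide.

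The main obstacle is the passage from continuous to lower semicontinuous $\psi$: one must show that the monotone limit of value functions really is the value function associated with the LSC payoff. The subtlety is that for merely LSC $\psi$, optimal stopping times need not exist and the value function need not be continuous, so the argument relies on lower semicontinuity of $\omega \mapsto \psi(B_\tau(\omega))$ along random stopping times plus the Sobolev estimate to upgrade the pointwise monotone convergence to convergence in $\mathcal{X}$ — this is precisely where assumption \ref{itm:continuous} and the weighted space $\mathcal{X} \hookrightarrow C_{-\gamma}(\R^+; L^2(O))$ become essential.
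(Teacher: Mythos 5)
Your plan — penalization, energy estimate, monotone approximation for the LSC obstacle — is a reasonable way to reconstruct the cited result rather than invoke it, but as written two signs are wrong and both would derail the construction.

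First, the penalty has the wrong sign. You wrote $-\partial_t J_\epsilon - \tfrac{1}{2}\Delta J_\epsilon + L + \tfrac{1}{\epsilon}(\psi - J_\epsilon)^+ = 0$, which makes the backward parabolic operator $-\partial_t J_\epsilon - \tfrac{1}{2}\Delta J_\epsilon + L$ equal to $-\tfrac{1}{\epsilon}(\psi - J_\epsilon)^+ \leq 0$. For a lower obstacle one needs the penalty to act as a nonnegative source that pushes $J_\epsilon$ upward when $J_\epsilon < \psi$, i.e.\ $-\partial_t J_\epsilon - \tfrac{1}{2}\Delta J_\epsilon + L - \tfrac{1}{\epsilon}(\psi - J_\epsilon)^+ = 0$, so that the operator stays $\geq 0$ and only the penalty must vanish in the limit. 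With your sign the ODE analogue $-u' + \tfrac{1}{\epsilon}(\psi-u)^+ = 0$ drives $u$ exponentially \emph{away} from $\psi$ backward in time, and your claimed monotonicity of $\epsilon \mapsto J_\epsilon$ fails: subtracting the two penalized equations at a negative minimum of $J_{\epsilon_1}-J_{\epsilon_2}$ produces no contradiction with your sign, while with the correct sign it does. Second, the test function $e^{2\gamma t}(J_\epsilon - \psi)$ has the wrong exponential weight. Elements of $\mathcal{X}$ are only controlled after multiplication by $e^{-\gamma t}$ (they may grow like $e^{\gamma t}$), so $\int e^{2\gamma t}|\nabla(J_\epsilon-\psi)|^2$ need not converge, and the boundary term at $t = \infty$ from the time integration by parts does not vanish; the weight must be $e^{-2\gamma t}$ to produce the $L^2_{-\gamma}$ control the $\mathcal{X}$ norm actually asks for.

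For contrast, the paper's route is shorter: it simply cites Bensoussan--Lions (Theorem 4.6, Chapter 3) for existence, uniqueness, and the identification with the value function, and obtains the norm bound \emph{directly} from the weak formulation of $V_\psi[J_\psi]=0$ by choosing the test function $w = e^{-2\gamma t}(\psi - J_\psi)$, which is admissible because $J_\psi - \psi + w = (J_\psi - \psi)(1 - e^{-2\gamma t}) \geq 0$. This avoids re-running the penalization argument and the passage to the limit entirely. If you want a self-contained proof, your structure is the right one provided you flip the penalty sign and change the weight to $e^{-2\gamma t}$; after those corrections the Poincaré absorption (which requires $\gamma < \lambda$) and the monotone-in-$\psi_n$ limiting argument both go through as you describe.
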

\begin{proof}
	See Theorem 4.6 in Chapter 3 of \cite{bensoussan2011applications}.

	For the uniform bound, we use the test function $w(t,x)= \e{-2\gamma t}(\psi(x)-J_\psi(t,x))$, which satisfies $J-\psi+w\geq 0$, and thus (\ref{eqn:weak-variational}) implies
	\begin{align}
		&\ \frac{1}{2}\int_{O}|J_\psi(0,\cdot)|^2dx+\frac{1}{2}\int_{\R^+}\int_{{O}}\e{-2\gamma t}|\nabla J_\psi(t,x)|^2dxdt\nn\\
		\leq&\ \int_{{O}}\psi(x)J_\psi(0,x)dx + \int_{\R^+}\int_{\overline{O}} \gamma \e{-2\gamma t} (J_\psi(t,x)-2\psi(x))J(t,x)dxdt\nn\\
		&\ +\int_{\R^+}\int_{{O}}\e{-2\gamma t} \Big( \nabla \psi(x)\cdot \nabla J_\psi(t,x)+ (\psi(x)-J_\psi(t,x))L(t,x)\Big)dxdt\nn\\
		\leq&\ C\|\psi\|_{H_0^1(O)}\sqrt{\int_{O}|J_\psi(0,\cdot)|^2dx+\int_{\R^+}\int_{{O}}\e{-2\gamma t}|\nabla J_\psi(t,x)|^2dxdt}.\nn
	\end{align}
\end{proof}

\bibliography{Skorokhod}
\bibliographystyle{plain}

\end{document}